\newcommand{\R}{\mathbb{R}}
\renewcommand{\H}{\mathbb{H}}
\newcommand{\g}{\gamma}
\renewcommand{\epsilon}{\varepsilon}
\renewcommand{\hat}{\widehat}
\renewcommand{\tilde}{\widetilde}
\def\Xint#1{\mathchoice
   {\XXint\displaystyle\textstyle{#1}}%
   {\XXint\textstyle\scriptstyle{#1}}%
   {\XXint\scriptstyle\scriptscriptstyle{#1}}%
   {\XXint\scriptscriptstyle\scriptscriptstyle{#1}}%
   \!\int}
\def\XXint#1#2#3{{\setbox0=\hbox{$#1{#2#3}{\int}$}
     \vcenter{\hbox{$#2#3$}}\kern-.5\wd0}}
\def\dashint{\Xint-}
\newtheorem{theorem}{Theorem}[section]
\newtheorem{proposition}[theorem]{Proposition}
\newtheorem{lemma}[theorem]{Lemma}
\theoremstyle{definition}
\newtheorem{definition}[theorem]{Definition}
\theoremstyle{remark}
\newtheorem{remark}[theorem]{Remark}
\numberwithin{equation}{section}
\newcommand{\diam}{\mathop\mathrm{diam}\nolimits}
\title[Higher Order Whitney and Lusin for Curves in $\mathbb{H}$]{Higher Order Whitney Extension and Lusin Approximation for Horizontal Curves in the Heisenberg Group}
\author[Andrea Pinamonti]{Andrea Pinamonti}
\address{ Department of Mathematics, University of Trento, Via Sommarive 14, 38123 Povo (Trento), Italy}
\email{Andrea.Pinamonti@unitn.it}
\author[Gareth Speight]{Gareth Speight}
\address{Department of Mathematical Sciences, University of Cincinnati, 2815 Commons Way, Cincinnati, OH 45221, United States}
\email{Gareth.Speight@uc.edu}
\author[Scott Zimmerman]{Scott Zimmerman}
\address{Department of Mathematics, 
The Ohio State University at Marion,
1465 Mt Vernon Ave, Marion, OH 43302, United States}
\email{Zimmerman.416@osu.edu}
\subjclass{58C25, 53C17.}
\begin{document}

\begin{abstract}
In the setting of horizontal curves in the Heisenberg group, we prove a $C^{m,\omega}$ finiteness principle, a $C^{m,\omega}$ Lusin approximation result, a $C^{\infty}$ Whitney extension result, and a $C^{\infty}$ Lusin approximation result. Combined with previous work, this completes the study of Whitney extension and Lusin approximation for horizontal curves of class $C^{m}$, $C^{m,\omega}$, and $C^{\infty}$ in the Heisenberg group.
\end{abstract}

\keywords{Finiteness principle; Whitney extension theorem; Heisenberg group}

\date{\today}

\maketitle



\section{Introduction}

The classical Whitney extension theorem \cite{Whitney} characterizes those collections of real-valued continuous functions $F=(F^{k})_{|k|\leq m}$, defined on a compact set $K\subset \mathbb{R}^{n}$, which can be extended to a $C^{m}$ function $f$ such that the derivatives satisfy $D^{k}f|_{K}=F^{k}$ for multiindices $|k|\leq m$. The key condition is that the collection $F$ must form a Whitney field. This encodes the fact that the maps in the collection must be related as dictated by Taylor's theorem. There have been versions of Whitney's result for mappings with varying regularity \cite{Bierstone, FefferWhit, FefferWhitFull, FefferWhitLinear, FefferSummary} and between different spaces \cite{FSS01, FSS03, FSS03b, Sig, SaccSiga, PV06, ZimWhitney, ZimFinite}. Whitney extension results have been applied to study rectifiable sets, construct mappings with desired differentiability properties, and prove Lusin approximation results \cite{CapPinSpe, CapPinSpe2, Fed, LedSpe, Speight, Whi35, Liu}. 

In recent years, it has become clear that a large part of geometric analysis, geometric measure theory, and real analysis in Euclidean spaces may be generalized to Carnot groups \cite{Italians, CDPT07, FSS01, FSS03, Mon02, Pansu}. Roughly speaking, a Carnot group is a Lie group (i.e. a smooth manifold equipped with smooth translations) whose Lie algebra (i.e. the space of left invariant vector fields) admits a suitable stratification in which the first layer generates all others. Absolutely continuous curves in a Carnot group which are almost everywhere tangent to the first layer are called horizontal curves and are fundamental to the geometry of the space.

Recently, there have been a number of contributions to the study of Whitney extension theorems and their applications in Carnot groups. The case of real-valued mappings defined on subsets of Carnot groups is quite well understood \cite{PV06}, but the case of mappings whose target is a Carnot group is less so. At present, there are only results for mappings from subsets of $\mathbb{R}$ into Carnot groups. Here, one requires the extension to be a horizontal curve. Existing results include a $C^{1}$ Whitney extension theorem in pliable Carnot groups \cite{Sig} and a $C^{m}$ Whitney extension theorem for $m\geq 1$ both in the Heisenberg group \cite{ZimSpePinWhitney} and in free Carnot groups of step $2$ \cite{H23}.

The present paper continues the work started in \cite{ZimSpePinWhitney}. We focus on the first Heisenberg group $\mathbb{H}$ (see Definition \ref{def-Heis}), which is the simplest and most often studied non-Euclidean Carnot group. While we expect analogues of our results to hold in the Heisenberg group $\mathbb{H}^{n}$ of any dimension, we focus on the first Heisenberg group $\mathbb{H}$ to keep the notation manageable. $\mathbb{H}$ can be viewed in coordinates as $\mathbb{R}^{3}$ with a two-dimensional horizontal distribution. In \cite{ZimSpePinWhitney}, the authors of the present paper proved a Whitney extension theorem for $C^{m}$ horizontal curves in $\mathbb{H}$. The second and third author then proved an analogue for $C^{m,\omega}$ horizontal curves in \cite{ZimSpeWhitney}. Here we recall that a real-valued function $f$ on $\mathbb{R}^n$ is of class $C^{m,\omega}$ if it is $m$-times differentiable and the $m$'th order partial derivatives are uniformly continuous with modulus of continuity $\omega$ (see Definition \ref{defcmw}). For example, if the $m$'th order partial derivatives of $f$ are Lipschitz continuous, then $f$ is of class $C^{m,\omega}$ with $\omega(t) = t$. In the present paper we show that the result of \cite{ZimSpeWhitney} has a number of interesting applications.

Our first result (Theorem \ref{t-finiteness}) establishes a finiteness principle for $C^{m,\omega}$ horizontal curves in the Heisenberg group. This is related to the original Whitney problem: given a compact set $K \subseteq \mathbb{R}^n$ and a continuous function $f:K \to \R$, when is there a function $F \in C^{m,\omega}(\R^n)$ such that $F|_K = f$? Note that we only consider a single continuous function $f$ rather than a collection of possible derivatives. Whitney posed and answered this question in the case $n=1$ in 1934 \cite{Whitney2}. See Theorem~\ref{t-brush} for a formulation due to Brudnyi and Shvartsman \cite{BruShv,BruShv4}. Fefferman then answered this question in full for $n \geq 1$ \cite{FefferWhitFull,FefferWhitLinear}. This has since come to be known as the finiteness principle for real-valued functions on $\mathbb{R}^n$. For curves in the Heisenberg group we ask a related question: given a compact set $K \subset \mathbb{R}$ and a continuous map $\gamma:K \to \mathbb{H}$, when must there be a horizontal $C^{m,\omega}$ curve $\Gamma$ with $\Gamma|_K = \gamma$? Of course, some kind of $C^{m,\omega}$ ``data'' must be imposed on the curve $\gamma$ in order to ensure the existence of a $C^{m,\omega}$ horizontal extension. In \cite{ZimFinite}, the third author proved a version of this finiteness principle but was only able to obtain a $C^{m,\sqrt{\omega}}$ regular extension from $C^{m,\omega}$ initial data. This drop in regularity was due to the method of the proof of the main result of \cite{ZimSpePinWhitney} in the $C^{m}$ setting. Using the $C^{m,\omega}$ version in \cite{ZimSpeWhitney}, we fix this drop in regularity.

Our second result (Theorem \ref{t-Lusin}) establishes an $(m,\omega)$-Lusin property for horizontal curves in $\mathbb{H}$. In general, a map possesses a {\em Lusin property} if it coincides with a smooth map up to a set of arbitrarily small measure. The most well known Lusin property comes from Lusin's theorem, which asserts that, given a measurable map $f\colon \mathbb{R}\to \mathbb{R}$ and $\varepsilon>0$, there exists a continuous map $F\colon \mathbb{R}\to \mathbb{R}$ such that $\mathcal{L}^{1}\{x\in\mathbb{R}:F(x)\neq f(x)\}<\varepsilon$. Other results show that increasing the regularity of the measurable function $f$ allows for more smoothness in the approximating function $F$. For related results in Euclidean spaces, see \cite{Fed, LT94, Menne, Haj, Liu, MZim, Aza1, Isa, Boj}. Lusin properties in Carnot groups were first studied for horizontal curves in the Heisenberg group in \cite{Speight}. There the second author showed directly that every horizontal curve coincides with a $C^{1}$ horizontal curve except for a set of small measure, but the same result does not hold in the Engel group (a step three Carnot group). This was then extended to pliable Carnot groups in \cite{Sig}. In \cite{CapPinSpe2} the first and second authors together with Capolli proved a Lusin approximation by $C^{m}$ horizontal curves in $\mathbb{H}$ by applying the $C^m$ Whitney extension theorem for horizontal curves from \cite{ZimSpePinWhitney}. To prove Theorem \ref{t-Lusin}, we combine the techniques of \cite{CapPinSpe2} with the $C^{m,\omega}$ Whitney extension theorem from \cite{ZimSpeWhitney}.

Our third result (Theorem~\ref{t-Cinfty}) establishes a Whitney extension theorem for $C^{\infty}$ horizontal curves in the Heisenberg group. The classical Whitney extension theorem for $C^{\infty}$ mappings (Theorem \ref{classicalWhitneyInf}) assumes that the jets are $C^{m}$ Whitney fields for every $m\geq 1$, i.e. that the conditions of the $C^{m}$ Whitney extension theorem hold for every $m\geq 1$. It is natural to ask whether it suffices to assume the conditions of the $C^{m}$ Whitney extension theorem for horizontal curves in $\mathbb{H}$ from \cite{ZimSpePinWhitney} for every $m\geq 1$. Such a hypothesis is clearly necessary. Surprisingly, this does not seem to easily yield a $C^{\infty}$ Whitney extension result. Instead, we use the conditions from the $C^{m,\omega}$ Whitney extension theorem for horizontal curves in $\mathbb{H}$ from \cite{ZimSpeWhitney} with $\omega(t)=t$. The key idea here is that, if a curve is $C^{m+1}$, then the $m$'th order derivatives are Lipschitz continuous on the compact set $K$, so the curve is in $C^{m,\omega}$ with $\omega(t)=t$. Hence every $C^{\infty}$ curve is $C^{m,\omega}$ for every $m$.

Our fourth and final result (Theorem \ref{tt-Lusin}) establishes a Lusin approximation result giving conditions under which a horizontal curve in $\mathbb{H}$ can be approximated by a $C^{\infty}$ horizontal curve. This follows by combining Theorem~\ref{t-Cinfty} with the techniques used in the proof of Theorem~\ref{t-Lusin}.

The paper is organized as follows. In Section~\ref{s-prelim}, we introduce notation and the necessary preliminaries for the paper. In Section~\ref{s-finiteness} we establish the finiteness principle for $C^{m,\omega}$ horizontal curves (Theorem~\ref{t-finiteness}). In  Section~\ref{s-lusin} we prove a Lusin approximation theorem for $C^{m,\omega}$ horizontal curves (Theorem~\ref{t-Lusin}). In Section \ref{SectionCinfWhitney} we outline the proof of the $C^\infty$ Whitney extension theorem for horizontal curves (Theorem~\ref{t-Cinfty}). We apply this in Section~\ref{SectionCinfLusin} to obtain a Lusin approximation theorem for $C^{\infty}$ horizontal curves (Theorem \ref{tt-Lusin}).

\bigskip
	
\textbf{Acknowledgements:} A. Pinamonti is a member of {\em Gruppo Nazionale per l'Analisi Matematica, la Probabilit\`a e le loro Applicazioni} (GNAMPA) of the {\em Istituto Nazionale di Alta Matematica} (INdAM).

G. Speight  was supported by a grant from the Simons Foundation (\#576219, G. Speight) and by a fellowship from the Taft Research Center at the University of Cincinnati.

Part of this work was done during a visit of G. Speight to the University of Trento and during a visit of A. Pinamonti to the University of Cincinnati. The authors thank both institutions for the excellent working conditions offered.

\section{Preliminaries}
\label{s-prelim}

\subsection{Whitney Extension in Euclidean Spaces}
\label{s-Euclidean}

We first recall relevant theory related to Whitney extension in Euclidean spaces.

\subsubsection{Notation}
Throughout the paper, $m$ will be a positive integer and $\omega$ will be a (concave) modulus of continuity. By modulus of continuity we mean a continuous, increasing, concave function $\omega:[0,\infty] \to [0,\infty]$ such that $\omega(0)=0$.

Given a quantity $d$, we write $a \lesssim_d b$ to indicate that $a\leq Cb$ where $C=C(d)>0$ is a constant possibly depending on $d$.

For any integer $k>0$, we say that a nonnegative quantity $c(a,b)$ is uniformly $o(|b-a|^k)$ on a set $K$
if, for every $\epsilon > 0$, there is a $\delta>0$ such that $c(a,b) \leq \varepsilon |b-a|^k$ whenever $a,b \in K$ satisfy $|b-a| < \delta$.

\subsubsection{$C^{m}$ Mappings}

\begin{definition}
A \emph{jet} of order $m$ on a set $E\subset \R$ is a collection $F=(F^k)_{k=0}^m$ of continuous, real-valued functions on $E$. Given such a jet, we define the \emph{$m$'th order Taylor polynomial} $T_a^m F$ of $F$ at a point $a \in E$ by
\begin{equation}
\label{e-TaylorJet}
(T_a^m F)(x)= \sum_{k=0}^m \frac{F^{k}(a)}{k!}(x-a)^k \quad \text{for all } x \in \R.
\end{equation}
\end{definition}

If $f:\R \to \R$ is $m$-times differentiable at $a$,
the Taylor polynomial $T_a^m f$
is defined as usual using the collection $F=(D^k f)_{k=0}^m$ in \eqref{e-TaylorJet}.
We will often drop the exponent and write $T_a F$ or $T_a f$ when the order of the polynomial is obvious from the context.

\begin{definition}\label{whitneyfield}
Given a jet $F$ of order $m$ on a compact set $K\subset \R$, define for $0\leq k\leq m$ the following \emph{remainder terms}
\[(R_{a}^{m}F)^{k}(x)=F^{k}(x)-\sum_{\ell=0}^{m-k}\frac{F^{k+\ell}(a)}{\ell!}(x-a)^{\ell}
\quad
\text{for all } a, x \in K.\]
Such a jet is called a \emph{Whitney field of class $C^m$ on $K$} if for every $0\leq k\leq m$, we have
\[(R_{a}^{m}F)^{k}(b) \text{ is uniformly } o(|a-b|^{m-k}) \text{ on } K.\]
\end{definition}



The following theorem is the classical Whitney extension theorem \cite{Whitney}.

\begin{theorem}[Classical Whitney extension theorem]\label{classicalWhitney}
Let $K$ be a compact subset of an open set $U\subset \mathbb{R}$. Then for every Whitney field $F$ of class $C^{m}$ on $K$, there exists a function $f\in C^{m}(U)$ such that
\[D^kf(x)=F^{k}(x) \quad \mbox{for $0\leq k\leq m$ and $x\in K$},\]
and $f$ is $C^{\infty}$ on $U\setminus K$.
\end{theorem}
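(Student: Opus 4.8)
The plan is to prove the classical one-dimensional Whitney extension theorem by constructing $f$ explicitly away from $K$ using a partition of unity subordinate to a Whitney decomposition of the open set $U \setminus K$, and by defining $f$ on $K$ to equal the prescribed values $F^0$. The core object is the family of dyadic-type cubes (here, intervals) $\{Q_i\}$ filling $U \setminus K$, each with diameter comparable to its distance from $K$, i.e. $\diam Q_i \approx \dist(Q_i, K)$, together with a smooth partition of unity $\{\varphi_i\}$ with $\varphi_i$ supported near $Q_i$, satisfying $\sum_i \varphi_i \equiv 1$ on $U \setminus K$ and the derivative bounds $|D^k \varphi_i(x)| \lesssim_k (\diam Q_i)^{-k}$.

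First I would fix for each $i$ a point $a_i \in K$ nearly realizing $\dist(Q_i, K)$, and define on $U \setminus K$ the function
\[
f(x) = \sum_i \varphi_i(x)\, (T_{a_i}^m F)(x),
\]
while setting $f(x) = F^0(x)$ for $x \in K$. Here $T_{a_i}^m F$ is the Taylor polynomial from \eqref{e-TaylorJet}. Since the sum is locally finite on the open set $U \setminus K$ and each summand is smooth, $f$ is automatically $C^\infty$ on $U \setminus K$. The substance of the proof is entirely about the behavior near $K$: I must show that $f$ is $m$-times differentiable at every point of $K$, that $D^k f(x) = F^k(x)$ there, and that all derivatives up to order $m$ are continuous across the interface.

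Next I would carry out the derivative estimates. The key algebraic device is that, because $\sum_i \varphi_i \equiv 1$, one can insert a reference Taylor polynomial for free: for $a \in K$ and $x \in U \setminus K$,
\[
D^k f(x) - D^k (T_a^m F)(x) = \sum_i D^k\!\bigl[\varphi_i \cdot (T_{a_i}^m F - T_a^m F)\bigr](x),
\]
and by the Leibniz rule this expands into terms involving $D^j \varphi_i$ times derivatives of the polynomial difference $T_{a_i}^m F - T_a^m F$. The crucial point is that the coefficients of this polynomial difference are exactly controlled by the Whitney remainder terms $(R_{a_i}^m F)^\ell(a)$ and $(R_a^m F)^\ell(a_i)$ from Definition \ref{whitneyfield}. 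Using the geometric comparabilities $|x - a_i| \approx |x - a| \approx \diam Q_i$ valid on the support of $\varphi_i$ (which holds whenever $x$ is near $a$), together with the derivative bounds on $\varphi_i$, every term is shown to be uniformly $o(|x-a|^{m-k})$. This yields simultaneously that $f$ agrees with the formal Taylor expansion of $F$ at each $a \in K$ to order $m$ and that the $k$'th derivative extends continuously with value $F^k(a)$.

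Finally I would assemble these estimates into the conclusion. Differentiability of order $m$ at points of $K$ and the identity $D^k f|_K = F^k$ follow from the $o(|x-a|^{m-k})$ bound with $k = m$ and the definition of higher derivatives via Taylor remainders; continuity of $D^k f$ at points of $K$ (approaching from within $U \setminus K$) follows from the same bound with a triangle-inequality comparison between nearby base points, while continuity from within $K$ is immediate since the $F^k$ are continuous. The main obstacle, and the heart of the classical argument, is the uniform remainder estimate in the displayed Leibniz expansion: one must verify that the Whitney field hypothesis, which controls remainders only for pairs of points in $K$, transfers through the partition of unity to control the smooth interpolant at interior points $x$, and this requires the careful geometric bookkeeping that $\diam Q_i$, $\dist(x,K)$, and the relevant point separations are all mutually comparable on $\spt \varphi_i$.
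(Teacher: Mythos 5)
The paper does not prove this statement; it is quoted as the classical Whitney extension theorem with a citation to Whitney's original work. Your outline is precisely the standard proof of that classical result (Whitney decomposition of $U\setminus K$, partition of unity with the bounds $|D^k\varphi_i|\lesssim_k (\diam Q_i)^{-k}$, gluing of the Taylor polynomials $T_{a_i}^m F$, and the remainder estimates obtained by inserting $\sum_i\varphi_i\equiv 1$ and comparing base points), and it is correct, so it matches the argument the paper is implicitly relying on.
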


\subsubsection{$C^{m,\omega}$ Mappings}

\begin{definition}\label{defcmw}
Given a modulus of continuity $\omega$, we define $C^{m,\omega}(\R)$ to be the space of $C^{m}$ functions $f\colon \R \to \R$ such that the following seminorm is finite:
\[ \Vert f \Vert_{C^{m,\omega}(\R)} = \sup_{\substack{x,y \in \R \\ x \neq y}} 
\frac{\left| D^m f(x) - D^m f(y) \right|}{\omega(|x-y|)}.\]
Given measurable $E \subset \mathbb{R}$, we define the {\em trace space} $C^{m,\omega}(E) = \{ F|_E \, : \, F \in C^{m,\omega}(\mathbb{R}) \}$
and the seminorm
$$
\Vert f \Vert_{C^{m,\omega}(E)} = \inf \left\{  \Vert F \Vert_{C^{m,\omega}(\R)} \, : \, F \in C^{m,\omega}(\R^n), \, F|_E = f \right\}.
$$
\end{definition}

For our future study of curves in the Heisenberg group, we also define $C^{m,\omega}(E,\mathbb{R}^3)$ to be the space of functions $\gamma=(f,g,h):E \to \R^3$ such that $f,g,h \in C^{m,\omega}(E)$, and we endow this space with the seminorm
$$
\Vert \gamma \Vert_{C^{m,\omega}(E,\mathbb{R}^3)} = \Vert f \Vert_{C^{m,\omega}(E)} + \Vert g \Vert_{C^{m,\omega}(E)} + \Vert h \Vert_{C^{m,\omega}(E)}.
$$

Next we record several useful estimates which follow from \cite[(2)]{FollandRemainder}.

\begin{lemma}
For any $f \in C^{m,\omega}(\R)$ there exists a constant $C>0$ such that for any $a,b \in \R$ and $0 \leq k \leq m$
\begin{align}
\label{e-Taylor-int}
\frac{|D^k f(b) - T_a^{m-k}(D^k f)(b)|}{|b-a|^{m-k}}
\leq C \omega(|b-a|).
\end{align}
In particular, since $(T_x^mf)' = T_x^{m-1}(f')$, for all $x,y \in \R$ it follows that
\begin{equation}
\label{e-taylor1}
|f(y)-T_x^mf(y)| \leq C\omega(|x-y|) |x-y|^m,
\end{equation}
\begin{equation}
\label{e-taylor2}
|f'(y)-(T_x^mf)'(y)| \leq C\omega(|x-y|) |x-y|^{m-1}.
\end{equation}
\end{lemma}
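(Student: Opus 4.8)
\section*{Proof proposal}

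The plan is to establish the single estimate \eqref{e-Taylor-int} directly from the integral form of Taylor's theorem and then read off \eqref{e-taylor1} and \eqref{e-taylor2} as special cases. Fix $f \in C^{m,\omega}(\R)$ and $0 \le k \le m$, and set $g = D^k f$ and $n = m-k$. Then $g$ is of class $C^n$, its top-order derivative is $g^{(n)} = D^m f$, and the defining seminorm yields the key modulus bound
\[
|g^{(n)}(s) - g^{(n)}(t)| = |D^m f(s) - D^m f(t)| \le \|f\|_{C^{m,\omega}(\R)}\,\omega(|s-t|) \qquad \text{for all } s,t \in \R.
\]
Thus everything reduces to estimating the order-$n$ Taylor remainder $g(b) - T_a^n g(b)$ of a $C^n$ function whose $n$'th derivative has modulus of continuity controlled by $\omega$.

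The case $k = m$ (that is, $n = 0$) is immediate from the seminorm bound above, since $|b-a|^0 = 1$, so assume $n \ge 1$. Taking without loss of generality $a < b$, I would write the integral form of Taylor's theorem to order $n-1$,
\[
g(b) - T_a^{n-1} g(b) = \frac{1}{(n-1)!}\int_a^b (b-t)^{n-1}\, g^{(n)}(t)\,dt,
\]
and then subtract the top-order term $\frac{g^{(n)}(a)}{n!}(b-a)^n = \frac{1}{(n-1)!}\int_a^b (b-t)^{n-1}\, g^{(n)}(a)\,dt$ to upgrade the order-$(n-1)$ polynomial to the order-$n$ polynomial. This produces the identity
\[
g(b) - T_a^n g(b) = \frac{1}{(n-1)!}\int_a^b (b-t)^{n-1}\bigl[g^{(n)}(t) - g^{(n)}(a)\bigr]\,dt,
\]
which is the crucial step: subtracting the endpoint value $g^{(n)}(a)$ is exactly what lets the modulus of continuity enter. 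Bounding the bracketed term by $\|f\|_{C^{m,\omega}(\R)}\,\omega(|t-a|) \le \|f\|_{C^{m,\omega}(\R)}\,\omega(|b-a|)$, using that $\omega$ is increasing and $|t-a| \le |b-a|$, and computing $\int_a^b (b-t)^{n-1}\,dt = (b-a)^n/n$, I obtain
\[
|g(b) - T_a^n g(b)| \le \frac{\|f\|_{C^{m,\omega}(\R)}}{n!}\,\omega(|b-a|)\,|b-a|^n,
\]
which, upon dividing by $|b-a|^{m-k}$ and recalling $n = m-k$, is precisely \eqref{e-Taylor-int} with $C = \|f\|_{C^{m,\omega}(\R)}/(m-k)!$.

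Finally I would deduce the two displayed consequences. Taking $k = 0$, so that $T_a^{m}(D^0 f) = T_a^m f$, and relabeling $a = x$, $b = y$ gives \eqref{e-taylor1} at once. For \eqref{e-taylor2} I would first verify the identity $(T_x^m f)' = T_x^{m-1}(f')$ by differentiating the defining sum $\sum_{\ell=0}^m \frac{D^\ell f(x)}{\ell!}(y-x)^\ell$ term by term in $y$ and reindexing; then \eqref{e-taylor2} is simply the $k=1$ case of \eqref{e-Taylor-int}, since $f' - (T_x^m f)' = f' - T_x^{m-1}(f')$. I do not expect a genuine obstacle here: the lemma is a quantitative Taylor estimate, and the only point demanding care is the bookkeeping in the integral-remainder manipulation above, where the endpoint term must be matched against the integral so that the difference $g^{(n)}(t) - g^{(n)}(a)$ appears and the hypothesis on the modulus of continuity can be invoked.
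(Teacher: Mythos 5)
Your argument is correct and is essentially the paper's own approach: the paper simply cites the integral form of the Taylor remainder from Folland's note, which is exactly the identity $g(b)-T_a^n g(b)=\frac{1}{(n-1)!}\int_a^b (b-t)^{n-1}\bigl[g^{(n)}(t)-g^{(n)}(a)\bigr]\,dt$ that you derive and then bound via the modulus of continuity. The deductions of \eqref{e-taylor1} and \eqref{e-taylor2} as the $k=0$ and $k=1$ cases are likewise exactly as intended.
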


\begin{definition}
For a compact set $K \subseteq \R$, a collection $F=(F^k)_{k=0}^m$ of continuous, real-valued functions on $K$ is a {\em Whitney field of class $C^{m,\omega}$ on $K$}
if there is a constant $C >0$ such that
$$
\frac{\left|F^k(b) - T_a^{m-k} F^k(b) \right|}{|b-a|^{m-k}} \leq C \omega(|b-a|)
    \qquad
    \text{for } a,b \in K, \, 0 \leq k \leq m.
$$
Here $T_a^{m-k} F^k$ is the $(m-k)$th order Taylor polynomial of the collection $(F^{j})_{j=k}^{m}$.

\end{definition}

A proof similar to Whitney's classical extension theorem \cite{Whitney} implies the following. (See, for example, the proof in \cite{Bierstone}.)

\begin{theorem}
\label{t-WhitClassicLip}
Suppose $F=(F^k)_{k=0}^m$ is a collection of continuous, real-valued functions on a compact set $K \subset \R$.
There is a function
$f \in C^{m,\omega}(\R)$ satisfying $D^k f|_K = F^k$ for $0 \leq k \leq m$
if and only if
$F$ is a Whitney field of class $C^{m,\omega}$ on $K$.
\end{theorem}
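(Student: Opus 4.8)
The plan is to prove Theorem~\ref{t-WhitClassicLip}, the $C^{m,\omega}$ analogue of Whitney's classical extension theorem. The statement has two directions, and I would treat them separately.

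\textbf{The necessity direction.} Suppose first that $f \in C^{m,\omega}(\R)$ satisfies $D^k f|_K = F^k$ for $0 \leq k \leq m$. I need to show that the collection $F = (F^k)_{k=0}^m$ is a Whitney field of class $C^{m,\omega}$ on $K$, i.e. that
\[
\frac{|F^k(b) - T_a^{m-k} F^k(b)|}{|b-a|^{m-k}} \leq C\omega(|b-a|)
\quad \text{for } a,b \in K,\ 0 \leq k \leq m.
\]
Since $F^k = D^k f|_K$ and the Taylor polynomial $T_a^{m-k} F^k$ built from the jet $(F^j)_{j=k}^m$ agrees with $T_a^{m-k}(D^k f)$, this is exactly estimate \eqref{e-Taylor-int} of the lemma following Definition~\ref{defcmw}, restricted to points of $K$. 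So the necessity direction is immediate from the recorded Taylor estimate, with $C$ controlled by $\|f\|_{C^{m,\omega}(\R)}$.

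\textbf{The sufficiency direction.} This is the substantive part, and the reference to Bierstone~\cite{Bierstone} signals that the proof mirrors Whitney's original construction. I would build the extension $f$ via a Whitney partition-of-unity argument adapted to the $C^{m,\omega}$ scale. The steps are: (i) decompose the open complement $\R \setminus K$ into a Whitney decomposition of dyadic intervals $\{Q_i\}$ whose diameters are comparable to their distance from $K$, together with a subordinate smooth partition of unity $\{\varphi_i\}$ satisfying the standard derivative bounds $|D^j \varphi_i| \lesssim \side(Q_i)^{-j}$; (ii) to each $Q_i$ associate a nearby point $a_i \in K$ with $\dist(Q_i,K) = \dist(Q_i,a_i)$ and define the local Taylor polynomial $P_i = T_{a_i}^m F$ using the jet $F$; (iii) set
\[
f(x) = \begin{cases} F^0(x) & x \in K, \\ \sum_i \varphi_i(x) P_i(x) & x \in \R \setminus K. \end{cases}
\]
The task is then to verify that $f \in C^{m,\omega}(\R)$ with the prescribed jet on $K$. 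Away from $K$ this is automatic since $f$ is a locally finite sum of polynomials times smooth cutoffs, and is in fact $C^\infty$ there. The real work is controlling $D^m f$ near $K$: one estimates $D^m f(x) - D^m f(y)$ by comparing each $P_i$ to a reference Taylor polynomial $P_a = T_a^m F$ at a nearby $K$-point, using the Whitney field hypothesis to bound the differences $|D^j(P_i - P_a)|$ in terms of $\omega(\side(Q_i))$, and summing against the partition-of-unity bounds. The key point, which is where the modulus $\omega$ enters and replaces the ``uniformly $o$'' conditions of the $C^m$ theory, is that the quantitative bound $|F^k(b) - T_a^{m-k}F^k(b)| \leq C\omega(|b-a|)|b-a|^{m-k}$ propagates through the summation to yield $|D^m f(x) - D^m f(y)| \lesssim \omega(|x-y|)$ uniformly.

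\textbf{Main obstacle.} The hardest step is the uniform $C^{m,\omega}$ control of $D^m f$ across the boundary of $K$, namely showing that the oscillation of $D^m f$ at a pair of points $x \in \R \setminus K$ and $y \in K$ (or both near $K$) is bounded by $C\omega(|x-y|)$. This requires carefully tracking how the errors $P_i - P_a$ accumulate: since $D^m \varphi_i$ can be as large as $\side(Q_i)^{-m}$, one must show the polynomial differences decay like $\omega(\side(Q_i)) \side(Q_i)^m$ so that the product stays bounded by $\omega$ of the relevant scale. This is precisely the place where the concavity and monotonicity of $\omega$ are used, and where the Whitney field inequality must be invoked at the correct order $k$ for each derivative that falls on the polynomial factor versus the cutoff. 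Since this argument is standard and carried out in detail in \cite{Bierstone}, I would state the construction, indicate these estimates, and refer to that source for the routine but lengthy verification.
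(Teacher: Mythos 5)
Your proposal matches the paper's treatment: the paper does not prove Theorem~\ref{t-WhitClassicLip} itself but notes that a proof follows the classical Whitney construction and points to \cite{Bierstone}, which is exactly the route you outline (necessity from the Taylor estimate \eqref{e-Taylor-int}, sufficiency via the Whitney decomposition and partition of unity with the modulus $\omega$ propagated through the polynomial-difference estimates). The outline is correct and no further comparison is needed.
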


Whitney applied this to prove the following \cite{Whitney2}. As mentioned in the introduction, the reformulation of Whitney's ``Finiteness Principle'' given here was discovered by Brudnyi and Shvartsman \cite{BruShv,BruShv4}.

\begin{theorem}
\label{t-brush}
Suppose $K \subseteq \R$ is compact and $f:K \to \R$ is continuous.
There is some $F \in C^{m,\omega}(\R)$ with $F|_K = f$
if and only if there is a constant $M>0$ such that,
for every subset $X \subseteq K$ with $\# X = m+2$,
there is some $F_X \in C^{m,\omega}(\R)$ such that 
$F_X = f$ on $X$ and $\Vert F_X \Vert_{C^{m,\omega}(\R)} \leq M$.
\end{theorem}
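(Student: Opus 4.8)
The plan is to dispatch necessity immediately and to reduce sufficiency, via Theorem~\ref{t-WhitClassicLip}, to the construction of a Whitney field of class $C^{m,\omega}$ on $K$ whose zeroth-order term is $f$. For necessity, if $F \in C^{m,\omega}(\R)$ satisfies $F|_K = f$, then for \emph{every} $X \subseteq K$ we may take $F_X = F$, so the condition holds with $M = \Vert F \Vert_{C^{m,\omega}(\R)}$. All the content is therefore in the converse.

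For sufficiency, assume the uniform interpolation condition holds with constant $M$. By Theorem~\ref{t-WhitClassicLip} it suffices to produce continuous functions $F^0 = f, F^1, \dots, F^m$ on $K$ forming a Whitney field of class $C^{m,\omega}$, i.e.\ satisfying $|F^k(b) - T_a^{m-k}F^k(b)| \le C\omega(|a-b|)|a-b|^{m-k}$ for all $a,b \in K$ and $0 \le k \le m$. Writing $P_a$ for the degree-$\le m$ jet polynomial $\sum_{k=0}^m \frac{F^k(a)}{k!}(x-a)^k$, a direct computation gives $T_a^{m-k}F^k(b) = D^k P_a(b)$ and $F^k(b) = D^k P_b(b)$, so these inequalities are \emph{equivalent} to the compatibility estimates $|D^k(P_a - P_b)(b)| \le C\omega(|a-b|)|a-b|^{m-k}$. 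The task is thus to assign to each $a \in K$ a polynomial $P_a$ of degree $\le m$ with $P_a(a) = f(a)$ satisfying this.

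The higher-order terms are read off from the local interpolants, the key quantitative input being a bound on the divided differences of $f$ of order $m+1$. Given $m+2$ points $t_0 < \dots < t_{m+1}$ of $K$ with $d := t_{m+1}-t_0$, let $F_X$ be the interpolant supplied by the hypothesis, with $\Vert F_X \Vert_{C^{m,\omega}(\R)} \le M$. Since $F_X = f$ on $X$ the divided differences agree, and applying the mean value theorem for divided differences to $F_X \in C^m$ together with the modulus bound $|D^m F_X(x) - D^m F_X(y)| \le M\omega(|x-y|)$ yields
\[
|f[t_0,\dots,t_{m+1}]| = |F_X[t_0,\dots,t_{m+1}]| \le \frac{M\,\omega(d)}{m!\,d}.
\]
This is precisely the quantity that $m+1$ points cannot detect and $m+2$ points can: it measures the $\omega$-modulus of the top-order derivative, which is what the seminorm controls, explaining the cardinality $m+2$. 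This order-$(m+1)$ bound makes the order-$k$ divided differences of $f$ into a Cauchy net as their nodes shrink to a common accumulation point $a \in K$; I would define $F^k(a)$ (equivalently $P_a$) through these limits, and define $P_a$ at an isolated point of $K$ by any admissible local interpolant (the Whitney-field inequalities at such a point are constrained only by the bounded-away neighbours). The estimates \eqref{e-Taylor-int}--\eqref{e-taylor2} then control how well $P_a$ approximates any admissible $F_X$ near $a$.

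The main obstacle is the verification of the compatibility estimates for all $0 \le k \le m$ simultaneously, which is the hard core of the one-dimensional finiteness principle. The difficulty is geometric: to convert a smallness bound for $P_a - P_b$ at interpolation nodes into bounds on its derivatives $D^k(P_a - P_b)(b)$ one inverts a Vandermonde system, and when the available points of $K$ cluster inside the diameter-$|a-b|$ window this system is ill-conditioned. I would resolve this by separating two regimes. Where $K$ is locally rich one selects $m+1$ well-separated nodes and uses \eqref{e-taylor1} to see that $P_a - P_b$ is $O(M\omega(|a-b|)|a-b|^m)$ at each, whence Lagrange interpolation gives the claim. Where $K$ is locally sparse the top-order coefficients of $P_a$ and $P_b$ are only weakly determined, but the uniform order-$(m+1)$ divided-difference bound forces $P_a$ and $P_b$ to agree to the required order regardless, since both are built from the same nearby values of $f$. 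Carrying out this dichotomy uniformly in $a,b$ and checking that the resulting $F^k$ are continuous on $K$ completes the construction, and Theorem~\ref{t-WhitClassicLip} then finishes the proof.
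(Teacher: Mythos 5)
First, a point of comparison: the paper does not prove Theorem~\ref{t-brush} at all --- it is quoted from Whitney \cite{Whitney2} in the reformulation of Brudnyi and Shvartsman \cite{BruShv,BruShv4} --- so there is no in-paper argument to measure you against. Judged on its own terms, your proposal gets the architecture right. Necessity is indeed immediate. For sufficiency, the reduction to producing a Whitney field of class $C^{m,\omega}$ with $F^0=f$ (so that Theorem~\ref{t-WhitClassicLip} applies) is the standard route, the identity $T_a^{m-k}F^k(b)=D^kP_a(b)$ is correct, and your key estimate
\[
|f[t_0,\dots,t_{m+1}]|=|F_X[t_0,\dots,t_{m+1}]|\le \frac{M\,\omega(t_{m+1}-t_0)}{m!\,(t_{m+1}-t_0)}
\]
is correctly derived from the recursion for divided differences together with the mean value theorem applied to the two constituent $(m+1)$-point differences. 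This is exactly the divided-difference criterion by which Whitney and Brudnyi--Shvartsman characterize the trace space, so you have reduced the finiteness principle to the right intermediate statement.

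The gap is that this intermediate statement --- that the uniform $(m+2)$-point divided-difference bound yields a $C^{m,\omega}$ Whitney field extending $f$ --- is the entire content of Whitney's theorem, and your text does not prove it. The passage from control of the order-$(m+1)$ divided differences to the existence, continuity, and compatibility estimates $|D^k(P_a-P_b)(b)|\le C\omega(|a-b|)\,|a-b|^{m-k}$ for \emph{all} $a,b\in K$ and all $0\le k\le m$ is where the work lies, and your treatment of it is a plan rather than an argument. The ``locally rich'' branch presupposes $m+1$ nodes with mutual separations comparable to $|a-b|$, which an arbitrary compact $K$ need not supply near $a$ and $b$; the ``locally sparse'' branch is resolved by the assertion that the divided-difference bound ``forces $P_a$ and $P_b$ to agree to the required order regardless,'' which is precisely the estimate to be proved, not a fact one can invoke. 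The standard executions (interpolate at the $m+1$ points of $K$ nearest to $a$, compare $P_a$ and $P_b$ by swapping one node at a time, each swap costing one order-$(m+1)$ divided difference times a product of node gaps, then sum the resulting series) require a careful choice of nodes and nontrivial bookkeeping that is absent here; continuity of the resulting $F^k$ on $K$ is likewise asserted without proof. So the proposal is a correct and well-oriented reduction, but the core of the theorem remains unproved; the honest fix is either to carry out the node-swapping argument in full or to cite \cite{Whitney2} or \cite{BruShv} for the divided-difference characterization, as the paper in effect does.
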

\newpage

\subsubsection{$C^{\infty}$ Mappings}

\begin{definition}
A jet of order $\infty$ on a set $K\subset \R$ is a collection $F=(F^k)_{k=0}^{\infty}$ of continuous, real-valued functions on $K$. Assuming $K$ is compact, such a jet $F$ is a Whitney field of class $C^{\infty}$ on $K$ if $(F^k)_{k=0}^{m}$ is a Whitney field of class $C^{m}$ on $K$ for every $m$.
\end{definition}

The following is due to Whitney \cite{Whitney}.

\begin{theorem}[Classical Whitney extension theorem]\label{classicalWhitneyInf}
Let $K$ be a compact subset of an open set $U\subset \mathbb{R}$. Then for every Whitney field $F$ of class $C^{\infty}$ on $K$, there exists a function $f\in C^{\infty}(U)$ such that
\[D^k f(x)=F^{k}(x) \quad \mbox{for every $k\geq 0$ and $x\in K$}.\]
\end{theorem}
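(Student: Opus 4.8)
The plan is to follow Whitney's original construction, specialized to the one-dimensional setting, producing a single extension whose local Taylor approximation order increases as one approaches $K$. First I would take a Whitney decomposition of the open set $U \setminus K$ into a countable family $\mathcal{W}$ of closed intervals with pairwise disjoint interiors, chosen so that $\diam Q \le \dist(Q,K) \le 4\,\diam Q$ for every $Q \in \mathcal{W}$, together with a subordinate smooth partition of unity $(\varphi_Q)_{Q \in \mathcal{W}}$: each $\varphi_Q$ is supported in a fixed dilate $Q^*$ of $Q$, the family $(Q^*)$ has bounded overlap, $\sum_Q \varphi_Q \equiv 1$ on $U \setminus K$, and $|\varphi_Q^{(j)}| \lesssim_j (\diam Q)^{-j}$ for every $j \ge 0$. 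For each $Q$ I would fix a nearest point $a_Q \in K$ with $\dist(a_Q, Q) = \dist(Q,K)$ and an integer degree $m_Q$ that tends to $\infty$ as $\diam Q \to 0$ (to be chosen below), and define
\begin{equation*}
f(x) = \begin{cases} F^0(x), & x \in K, \\[2pt] \sum_{Q \in \mathcal{W}} \varphi_Q(x)\,(T_{a_Q}^{m_Q} F)(x), & x \in U \setminus K. \end{cases}
\end{equation*}
On $U \setminus K$ the sum is locally finite with smooth summands, so $f \in C^\infty(U \setminus K)$; the entire issue is the behaviour of $f$ and its derivatives at and near $K$.

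The core of the argument is to show, for every fixed order $N \ge 0$, that $f$ is $N$-times differentiable on $U$ with $D^N f$ continuous and $D^N f|_K = F^N$. For $x \in U \setminus K$ and a reference point $a \in K$ I would differentiate the defining sum by the Leibniz rule,
\begin{equation*}
D^N f(x) = \sum_{Q} \sum_{j=0}^N \binom{N}{j}\, \varphi_Q^{(j)}(x)\, (T_{a_Q}^{m_Q} F)^{(N-j)}(x),
\end{equation*}
and then exploit that $\sum_Q \varphi_Q^{(j)}(x) = 0$ for every $j \ge 1$ (since $\sum_Q \varphi_Q \equiv 1$) to subtract off a common Taylor polynomial $T_a^N F$. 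This rewrites $D^N f(x) - (T_a^N F)^{(N)}(x)$ as a sum of terms each carrying a factor of the form $(T_{a_Q}^{m_Q} F - T_a^{m_Q} F)^{(\ell)}(x)$, i.e. differences of Taylor polynomials based at two points of $K$. Such differences are controlled precisely by the remainder terms $(R_{a}^{m}F)^{k}$ from Definition \ref{whitneyfield}: the Whitney-field hypotheses, which hold for \emph{every} order $m$, give that these remainders are uniformly $o(|a_Q - a|^{\,m_Q - k})$, and since on the support of $\varphi_Q$ we have $|x - a_Q| \approx \diam Q \approx |x - a|$, the bounded overlap together with the bounds $|\varphi_Q^{(j)}| \lesssim_j (\diam Q)^{-j}$ converts this into an estimate forcing $D^N f(x) \to F^N(a)$ as $x \to a$, for each fixed $N$. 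Letting $m_Q \to \infty$ fast enough as a function of $\diam Q$ guarantees these estimates simultaneously for all $N$, so the same $f$ is $C^N$ for every $N$, hence $C^\infty$, and the matching $D^N f|_K = F^N$ falls out of the principal ($j=0$) term.

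The main obstacle is exactly the passage from the family of $C^m$ extensions, one for each $m$, to a single $C^\infty$ extension: the extensions furnished by Theorem \ref{classicalWhitney} for different $m$ need not agree, so they cannot simply be patched, and the increasing-degree construction above is what reconciles them. The delicate bookkeeping lies in choosing the growth of $m_Q$ and organizing the Leibniz expansion so that, near any point of $K$, the mixed terms coming from derivatives of the partition of unity are absorbed by the high-order vanishing of the Whitney remainders — all while the hypotheses provide only the qualitative $o(\cdot)$ decay rather than uniform constants. Verifying that these competing estimates close for every $N$ simultaneously, using the bounded overlap of $(Q^*)$ and the scaling of $\varphi_Q^{(j)}$, is the technical heart of the proof.
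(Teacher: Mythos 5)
The paper does not prove this statement at all --- it is quoted as a classical result of Whitney with a citation, so there is no in-paper argument to compare against. Your sketch is essentially the standard proof (Whitney's original construction, as presented e.g. in Malgrange or Tougeron): a Whitney decomposition of $U\setminus K$, a subordinate partition of unity, and local Taylor polynomials $T_{a_Q}^{m_Q}F$ whose degree $m_Q$ tends to infinity as $\diam Q\to 0$. The outline is correct, and you correctly identify the real point --- that the $C^m$ extensions for different $m$ cannot be patched, so a single variable-degree extension is needed. Two refinements are worth recording. First, after subtracting a fixed comparison polynomial $T_a^N F$ you get not only the two-base-point differences $(T_{a_Q}^{m_Q}F - T_a^{m_Q}F)^{(\ell)}$, which the order-$m_Q$ Whitney conditions control, but also the tail $(T_a^{m_Q}F - T_a^N F)^{(\ell)}(x)=\sum_{k=\max(N,\ell)+1}^{m_Q}\frac{F^k(a)}{(k-\ell)!}(x-a)^{k-\ell}$, whose constant involves $\sup_K|F^k|$ for $k\le m_Q$ and hence also constrains the choice of $m_Q$. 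Second, and relatedly, the constraint on $m_Q$ runs in the opposite direction from your phrase ``fast enough'': since the hypotheses give only qualitative moduli $\delta(m,\epsilon)$ for each order $m$, one must choose $m_Q\to\infty$ \emph{slowly} enough --- e.g.\ $m_Q$ is the largest $m$ with $\diam Q\le t_m$ for a pre-chosen sequence $t_m\downarrow 0$ adapted to those moduli and to the growth of $\sup_K|F^k|$ --- so that for each fixed $N$ the error terms are still $o(|x-a|^{N-\ell})$. With that correction the argument closes, and it is the same construction the cited source carries out.
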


\subsection{Whitney Extension in The Heisenberg Group}

We now recall the (first) Heisenberg group and important quantities for Whitney extension in that setting.

\subsubsection{The Heisenberg Group} 
\label{s-Heis}

\begin{definition}\label{def-Heis}
The {\em  Heisenberg group} is defined to be $\mathbb{H} = \mathbb{R}^{3}$ with group law
\[(x,y,z)*(x',y',z')
=
\left(x+x',y+y',z+z'+2(yx'-xy')\right)\]
for $x,y,z,x',y',z' \in \R$.
With this group law, the Heisenberg group forms a Lie group.
The corresponding left invariant vector fields $X, Y, Z$ on $\mathbb{H}$ are
$$
X(p) = \tfrac{\partial}{\partial x} + 2y \tfrac{\partial}{\partial z}, \quad
Y(p) = \tfrac{\partial}{\partial y} - 2x \tfrac{\partial}{\partial z}, \quad
Z(p) = \tfrac{\partial}{\partial z}
\qquad
$$
for $p = (x,y,z) \in \R^3$.
\end{definition}

It is easy to check that $(x,y,z)^{-1} = (-x,-y,-z)$. Since $[X,Y] = -4Z$, the Lie group $\H$ is a Carnot group of step 2 with horizontal distribution $\text{span} \{ X, Y\}$.


\begin{definition}
An absolutely continuous curve $\gamma:\R \to \R^{3}$ is {\em horizontal in $\mathbb{H}$} if we have $\gamma'(t) \in \text{span} \{ X(t), Y(t)\}$ for almost every $t \in \R$. 
\end{definition}

The following proposition gives an equivalent formulation.

\begin{proposition}
Suppose $\gamma=(f,g,h):\R \to \R^3$ is absolutely continuous.
Then
$\g$ is horizontal if and only if
$$
h' = 2 (f'g - fg')
\qquad
\text{a.e. in } \R.
$$
\end{proposition}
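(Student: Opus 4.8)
The plan is to work entirely in the standard coordinates of $\R^3$ and to reduce horizontality to a pointwise linear-algebra condition on the velocity vector, which we then read off one coordinate at a time. Since $\gamma$ is absolutely continuous, the derivative $\gamma'(t) = (f'(t), g'(t), h'(t))$ exists for a.e. $t \in \R$, and at such points the definition of horizontality asks precisely that $\gamma'(t) \in \text{span}\{X(\gamma(t)), Y(\gamma(t))\}$. So the whole statement will come down to identifying this span in coordinates.

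First I would record the coordinate expressions of the horizontal vector fields along the curve. Using the formulas in Definition~\ref{def-Heis}, at the point $\gamma(t) = (f(t), g(t), h(t))$ we have
$$
X(\gamma(t)) = (1, 0, 2g(t)), \qquad Y(\gamma(t)) = (0, 1, -2f(t)),
$$
so a general horizontal vector at $\gamma(t)$ takes the form
$$
aX(\gamma(t)) + bY(\gamma(t)) = \bigl(a,\, b,\, 2ag(t) - 2bf(t)\bigr), \qquad a,b \in \R.
$$
The key observation is that the first two coordinates pin down the coefficients: if $\gamma'(t)$ is to equal such a vector, then necessarily $a = f'(t)$ and $b = g'(t)$, and the only remaining constraint is that the third coordinates agree, i.e. $h'(t) = 2f'(t)g(t) - 2g'(t)f(t) = 2(f'g - fg')(t)$.

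From here both implications fall out at once. For the forward direction, horizontality at a.e. $t$ supplies coefficients $a,b$, which the first two components force to be $f'(t), g'(t)$, and the third component then yields $h' = 2(f'g - fg')$ a.e. For the converse, assuming $h' = 2(f'g - fg')$ a.e., at a.e. $t$ I would simply set $a = f'(t)$ and $b = g'(t)$ and verify directly from the displayed formula that $\gamma'(t) = f'(t) X(\gamma(t)) + g'(t) Y(\gamma(t))$, so that $\gamma'(t)$ lies in the horizontal plane and $\gamma$ is horizontal. There is essentially no analytic obstacle: the statement is a coordinate computation. The only point needing (mild) care is the bookkeeping with the almost-everywhere quantifier inherited from absolute continuity, together with the remark that the candidate coefficients $a,b$ are uniquely and measurably determined as $f',g'$, so no measurable selection issue arises.
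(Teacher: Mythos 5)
Your argument is correct and is essentially the same coordinate computation the paper defers to (it cites Lemma~2.3 of \cite{Speight} rather than proving the statement itself): evaluate $X$ and $Y$ along the curve, observe that the first two components of $\gamma'$ force the coefficients to be $f'$ and $g'$, and read the horizontality condition off the third component. No further comment is needed.
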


For a proof of this, see Lemma~2.3 in \cite{Speight}. If $\g \in C^m_\H (\R)$
(i.e. $\gamma \in C^m(\R,\R^3)$ and $\g$ is a horizontal curve),
then, according to the Leibniz rule, we have
\begin{equation}
    \label{e-LeibnizRule}
D^k h = 2 \sum_{i=0}^{k-1} \binom{k-1}{i} \left(D^{k-i} f D^i g - D^{k-i} g D^i f \right)
\qquad \text{for } 1 \leq k \leq m \text{ on } \R.
\end{equation}
For a more thorough introduction to the Heisenberg group, see \cite{HajZimGeod}.

\subsubsection{Continuous Area Discrepancy and Velocity}
\label{ctsdiscrepency}

\begin{definition}
Suppose $F = (F^{k})_{k=0}^m$, $G = (G^{k})_{k=0}^m$, $H = (H^{k})_{k=0}^m$ are collections of continuous, real-valued functions on a set $E \subseteq \R$.
Set $\gamma:=(F,G,H)$. For each $a,b \in E$, define the {\em area discrepancy} $A^m(\gamma;a,b)$ by: 
\begin{align*}
A^m(\g;a,b) &= h(b) - h(a) - 2 \int_a^b ((T_a F)'T_a G - (T_a G)'T_aF)\\
& \hspace{1in} +2f(a)(g(b) - T_a G(b)) - 2g(a)(f(b) - T_aF(b)).
\end{align*}
For $a,b\in E$ with $a<b$, define the {\em $\omega$-velocity} $V^m_\omega(\g ;a,b)$ by:
$$
V^m_\omega (\g;a,b) = \omega(b-a)^2 (b-a)^{2m} + \omega(b-a) (b-a)^m \int_a^b \left( |(T_aF)'|+ |(T_aG)'| \right).
$$
If $\gamma=(f,g,h) \in C^m(\R,\R^3)$ then $A^m(\g;a,b)$ and $V^m_\omega (\g;a,b)$ are defined as above using the collections $F = (D^k f)_{k=0}^m$, $G = (D^k g)_{k=0}^m$, $H = (D^k h)_{k=0}^m$ unless otherwise noted.
If $m$ is clear from the context, we will use the notation $A(\gamma;a,b)$ and $V_{\omega}(\gamma;a,b)$ for simplicity.
\end{definition}

We recall that the area and velocity terms are left invariant. The proof of this fact is the same as that of \cite[Lemma 3.3]{ZimFinite}.

\begin{lemma}
\label{l-AVleftinv}
Suppose $\g \in C^m(\R,\R^3)$.
For any $p \in \H$ and $a,b \in \R$,
we have 
$$
A(p * \g;a,b) = A(\g;a,b) \quad \text{and} \quad V_{\omega}(p * \g;a,b) = V_{\omega}(\g;a,b).
$$
\end{lemma}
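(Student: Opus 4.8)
The plan is to compute both quantities directly from the definitions after expanding the group law. Writing $p = (p_1,p_2,p_3)$ and $\g = (f,g,h)$, the group law gives $p * \g = (\tilde f, \tilde g, \tilde h)$ with $\tilde f = p_1 + f$, $\tilde g = p_2 + g$, and $\tilde h = p_3 + h + 2(p_2 f - p_1 g)$. The elementary observation used throughout is that adding a constant to a $C^m$ function changes its $m$'th order Taylor polynomial only in its constant term; hence $T_a \tilde f = p_1 + T_a f$ and $T_a \tilde g = p_2 + T_a g$, so that $(T_a \tilde f)' = (T_a f)'$ and $(T_a \tilde g)' = (T_a g)'$. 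I will also repeatedly use $T_a f(a) = f(a)$ and $T_a g(a) = g(a)$.

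The velocity is immediate. The first summand $\omega(b-a)^2(b-a)^{2m}$ does not involve $\g$ at all, and the integrand of the second summand depends only on $(T_a \tilde f)' = (T_a f)'$ and $(T_a \tilde g)' = (T_a g)'$. Therefore $V_\omega(p * \g; a,b) = V_\omega(\g;a,b)$ with no further work.

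For the area discrepancy, I would substitute $(\tilde f, \tilde g, \tilde h)$ into each of the four pieces of $A^m$ and expand. The difference $\tilde h(b) - \tilde h(a)$ eliminates $p_3$ and produces the cross terms $2p_2(f(b)-f(a)) - 2p_1(g(b)-g(a))$. Expanding the integral via $T_a \tilde f = p_1 + T_a f$ and $T_a \tilde g = p_2 + T_a g$ separates it into the original integrand plus a linear piece $2\int_a^b \left( p_2 (T_a f)' - p_1 (T_a g)' \right)$, which integrates to boundary terms involving $T_a f(b), T_a g(b), f(a), g(a)$. Finally, the two remaining pieces $2\tilde f(a)(\tilde g(b) - T_a \tilde g(b))$ and $2\tilde g(a)(\tilde f(b) - T_a \tilde f(b))$ each split off their original contribution plus the terms $2p_1(g(b)-T_a g(b))$ and $2p_2(f(b)-T_a f(b))$, because the additive constant $p_2$ (resp.\ $p_1$) cancels inside the difference $\tilde g(b) - T_a \tilde g(b)$ (resp.\ $\tilde f(b) - T_a \tilde f(b)$).

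The main, and essentially only, obstacle is the bookkeeping: one must collect all contributions proportional to $p_1$ and, separately, to $p_2$ and check that each group vanishes. Gathering the $p_1$ terms gives $-2p_1 g(b) + 2p_1 g(a) + 2p_1 T_a g(b) - 2p_1 g(a) + 2p_1 g(b) - 2p_1 T_a g(b) = 0$, and the symmetric computation for the $p_2$ terms likewise vanishes. Consequently only the original summands survive, yielding $A(p * \g; a,b) = A(\g;a,b)$ and completing the proof.
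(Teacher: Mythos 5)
Your computation is correct, and it is exactly the direct expand-and-cancel argument the paper has in mind: the paper does not reprove the lemma but defers to \cite[Lemma 3.3]{ZimFinite}, whose proof is this same verification that the Taylor polynomials shift by constants (leaving $(T_af)'$, $(T_ag)'$ unchanged, hence $V_\omega$ invariant) and that the $p_1$, $p_2$, $p_3$ contributions to $A$ cancel in groups. Nothing to add.
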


Here, $p * \gamma$ is the curve $t \mapsto p * \gamma(t)$ i.e. the left translation of $\gamma$ by $p$.
For $C^{m,\omega}$ horizontal curves in the Heisenberg group, we have the following version of Theorem~\ref{t-WhitClassicLip} from Euclidean spaces for curves in the Heisenberg group \cite{ZimSpeWhitney}.

\begin{theorem}
\label{t-HeisWhitLip}
Suppose $K \subseteq \R$ is compact and 
$F=(F^k)_{k=0}^m$, $G=(G^k)_{k=0}^m$, and $H=(H^k)_{k=0}^m$ are collections of continuous, real-valued functions on $K$.
There is a horizontal curve $\Gamma \in C^{m,\omega}(\R,\R^3)$  
satisfying $D^k \Gamma|_K = (F^k,G^k,H^k)$
for $0 \leq k \leq m$
if and only if
there is a constant $\hat{C}>0$ satisfying the following:
\begin{enumerate}
    \item 
    $F$, $G$, and $H$ are Whitney fields of class $C^{m,\omega}$ on $K$,
    \item for every $1 \leq k \leq m$, the following holds on $K$:
    $$
        H^k = 2 \sum_{i=0}^{k-1}  \binom{k-1}{i}  \left(F^{k-i}G^i- G^{k-i}F^i \right),
    $$
    \item 
    and, writing $\gamma = (F,G,H)$,
    $$
\left| A^m(\g;a,b) \right| \leq \hat{C} V^m_\omega (\g;a,b)
\quad \text{for all } a,b \in K \text{ with } a<b.
$$
\end{enumerate}
\end{theorem}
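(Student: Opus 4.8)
The plan is to prove both implications separately. The necessity direction (``only if'') is a direct verification from the definitions and the Taylor estimates already recorded, while the sufficiency direction (``if'') is the substantive construction, and is where I expect essentially all of the difficulty to lie.

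For necessity, suppose $\Gamma = (f,g,h) \in C^{m,\omega}(\R,\R^3)$ is horizontal with $D^k\Gamma|_K = (F^k,G^k,H^k)$ for $0 \le k \le m$. Condition (1) is immediate: since $f,g,h \in C^{m,\omega}(\R)$, the estimate \eqref{e-Taylor-int} applied to each component shows that the restricted jets are Whitney fields of class $C^{m,\omega}$ on $K$. Condition (2) follows by restricting to $K$ the Leibniz identity \eqref{e-LeibnizRule}, which holds on all of $\R$ because $\Gamma$ is a $C^m$ horizontal curve. For condition (3), I would use horizontality to replace $h(b)-h(a)$ by $2\int_a^b (f'g - fg')$ in the definition of $A^m(\gamma;a,b)$, then write $u = f - T_a^m F$ and $v = g - T_a^m G$ (so that $u(a)=v(a)=0$), substitute, and expand. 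After integrating by parts in the cross terms and discarding the endpoint contributions at $a$, every surviving term is a product of a remainder factor, bounded by $C\omega(b-a)(b-a)^m$ via \eqref{e-taylor1} (or by $C\omega(b-a)(b-a)^{m-1}$ for a derivative via \eqref{e-taylor2}), with either $\int_a^b (|(T_aF)'|+|(T_aG)'|)$ or a second remainder factor. Matching these against the two summands of $V^m_\omega(\gamma;a,b)$ yields $|A^m(\gamma;a,b)| \le \hat{C}\, V^m_\omega(\gamma;a,b)$.

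For sufficiency, I would first apply the Euclidean extension theorem (Theorem \ref{t-WhitClassicLip}) to the jets $F$ and $G$, producing $f,g \in C^{m,\omega}(\R)$ with $D^k f|_K = F^k$ and $D^k g|_K = G^k$. Defining $h$ by integrating the horizontality relation $h' = 2(f'g - fg')$ produces a $C^{m,\omega}$ function whose derivatives of orders $1$ through $m$ already agree with $H^k$ on $K$, by condition (2) together with \eqref{e-LeibnizRule}. The one thing that is not automatic is that $h$ reproduce the prescribed zeroth-order values $H^0$ on $K$: the increment $2\int_a^b(f'g - fg')$ over a complementary interval of $K$ need not equal $H^0(b)-H^0(a)$, and the gap between these two quantities is governed precisely by $A^m(\gamma;a,b)$.

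The main obstacle, therefore, is to correct this area defect interval by interval while retaining uniform $C^{m,\omega}$ control. On each bounded complementary interval $(a,b)$ of $K$ (the two unbounded rays being handled by a single Taylor polynomial, with no matching needed), I would add to $f$ and $g$ a correction supported in $(a,b)$ and vanishing to order $m$ at both endpoints, so that the endpoint jets are unchanged; the signed-area contribution of such a correction shifts the $h$-increment by a controllable amount. The quantitative heart is an isoperimetric-type estimate: a correction of $C^{m,\omega}$ size $s$ on a gap of length $b-a$ can realize an area increment on the scale of $V^m_\omega(\gamma;a,b)$, so condition (3) is exactly the budget guaranteeing that the required increment $A^m(\gamma;a,b)$ is achievable. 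The delicate point is uniformity: one must take the corrections small relative to gap length and verify, combining the Whitney-field bounds (1) with the velocity bound (3), that the modulus of continuity of $D^m\Gamma$ stays controlled by a fixed multiple of $\omega$ across all gaps and at limit points of $K$, so that the glued curve lies in $C^{m,\omega}(\R,\R^3)$. This gluing with uniform estimates is where the bulk of the technical work resides.
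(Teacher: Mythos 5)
Your outline matches the approach of the paper's proof: the theorem is imported from \cite{ZimSpeWhitney}, and the argument there (reprised in Section~\ref{SectionCinfWhitney} for the $C^\infty$ case) proceeds exactly as you describe --- necessity via the Leibniz rule and the Taylor estimates \eqref{e-taylor1}--\eqref{e-taylor2}, and sufficiency by extending $F$ and $G$ with the classical $C^{m,\omega}$ Whitney theorem, identifying the area discrepancy $A^m$ as the obstruction to matching $H^0$, and correcting it on each complementary interval with perturbations vanishing to order $m$ at the endpoints whose achievable area is budgeted by $V^m_\omega$ (the case analysis behind your ``isoperimetric-type estimate'' being Lemmas~\ref{fbig}--\ref{fgsmall}). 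The proposal is correct and takes essentially the same route.
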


Condition {\em (1)} here was discussed above, and condition {\em (2)} is a consequence of the Leibniz rule as in \eqref{e-LeibnizRule}. Condition {\em (3)} establishes a control on the rate at which the curve gathers area in the plane, and this area is fundamentally tied to the height of a horizontal curve in the Heisenberg group. See \cite{ZimSpePinWhitney, Speight, ZimWhitney} for more discussion on this relationship.

Next we recall an interesting observation about $C^{m,\omega}$ curves in the Heisenberg group. 
If $|A/V|$ is bounded on a compact set $K \subset \mathbb{R}$, then we may redefine the third coordinate on $\mathbb{R} \setminus K$ in such a way that the resulting curve is $C^{m,\omega}$ and satisfies the horizontality condition on $K$. See Lemmas~3.5 and 3.6 in \cite{ZimFinite} for a proof.

\begin{lemma}
\label{l-horizLip}
Suppose $f, g, h \in C^{m,\omega}(\R)$
and $K \subseteq \R$ is compact. 
Write
$\gamma =(f,g,h)$.
If there is a constant $C_{AV}>0$ such that
$$
|A^m(\g;a,b)| \leq C_{AV} V_{\omega}^m (\g;a,b)
\quad
\text{for all } a,b \in K
\text{ with } a< b,
$$
then there is some $\hat{h} \in C^{m,\omega}(\mathbb{R})$
such that $\hat{h}|_K = h|_K$, 
and
$$
    \hat{h}' =  2(f'g-fg') \quad \text{ on } K.
$$
\end{lemma}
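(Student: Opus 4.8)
The plan is to reduce the statement to the Euclidean $C^{m,\omega}$ Whitney extension theorem (Theorem~\ref{t-WhitClassicLip}) by building, on $K$, the jet that the sought-after curve ought to have, and then checking it is a Whitney field of class $C^{m,\omega}$. Concretely, fix a basepoint $t_0$ and set $\phi(t) = 2\int_{t_0}^t (f'g - fg')$, so that $\phi$ is $C^m$ with $\phi' = 2(f'g-fg')$ everywhere, and define a jet $\hat{H} = (\hat{H}^k)_{k=0}^m$ on $K$ by $\hat{H}^0 = h|_K$ and $\hat{H}^k = D^k\phi|_K$ for $1 \le k \le m$. By the Leibniz rule \eqref{e-LeibnizRule}, the latter are exactly the horizontal height derivatives $2\sum_{i=0}^{k-1}\binom{k-1}{i}(D^{k-i}f\, D^i g - D^{k-i}g\, D^i f)$. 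If I can show $\hat{H}$ is a Whitney field of class $C^{m,\omega}$ on $K$, then Theorem~\ref{t-WhitClassicLip} produces $\hat h \in C^{m,\omega}(\R)$ with $D^k \hat h|_K = \hat{H}^k$; in particular $\hat h|_K = h|_K$ and $\hat h'|_K = \hat H^1 = 2(f'g - fg')|_K$, which is the conclusion.

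It therefore remains to verify the Whitney-field inequalities $|\hat H^k(b) - T_a^{m-k}\hat H^k(b)| \lesssim \omega(|b-a|)\,|b-a|^{m-k}$ for $a,b \in K$ and $0 \le k \le m$. For $1 \le k \le m$ the entry $\hat H^k$ is the genuine derivative $D^k\phi$, and since the derivatives of $f$ and $g$ are bounded on the compact set $K$, one checks via the Leibniz rule that $D^m\phi$ has modulus of continuity $\lesssim \omega$ near $K$; the required bound is then exactly the Taylor estimate \eqref{e-Taylor-int} applied to $\phi$. Thus only the case $k=0$ is substantive.

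For $k=0$ the key is to recognize the Whitney remainder as the area discrepancy up to controlled error. A direct computation gives $T_a^m \hat H^0(b) - h(a) = \int_a^b T_a^{m-1}\big(2(f'g - fg')\big)$, so that $h(b) - T_a^m\hat H^0(b)$ differs from $A^m(\gamma;a,b)$ only by (i) the integrals of $T_a^{m-1}(f'g) - (T_aF)'\,T_aG$ and $T_a^{m-1}(fg') - (T_aG)'\,T_aF$, and (ii) the boundary terms $2f(a)(g(b)-T_aG(b))$ and $2g(a)(f(b)-T_aF(b))$. Each integrand in (i) is a polynomial all of whose terms have degree $\ge m$ in $(x-a)$ --- because the degree-$\le m-1$ part of a product of Taylor polynomials agrees with the Taylor polynomial of the product --- so the integrals are $O(|b-a|^{m+1})$; the terms in (ii) are $O(\omega(|b-a|)\,|b-a|^m)$ by \eqref{e-taylor1}. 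Since $\omega(t) \gtrsim t$ for $t$ in the bounded range $[0,\diam K]$ (a consequence of concavity and $\omega(0)=0$), one has $|b-a|^{m+1} \lesssim \omega(|b-a|)\,|b-a|^m$, so all error terms are $O(\omega(|b-a|)\,|b-a|^m)$. Finally the same concavity bound together with the uniform boundedness of the Taylor velocities $|(T_aF)'|, |(T_aG)'|$ on $K$ yields $V^m_\omega(\gamma;a,b) \lesssim \omega(|b-a|)\,|b-a|^m$, whence the hypothesis $|A^m(\gamma;a,b)| \le C_{AV}\, V^m_\omega(\gamma;a,b)$ gives $|h(b) - T_a^m \hat H^0(b)| \lesssim \omega(|b-a|)\,|b-a|^m$, completing the verification.

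I expect the main obstacle to be the bookkeeping in this last step: isolating the area discrepancy inside the $k=0$ Whitney remainder and showing the leftover integral and boundary terms are genuinely of order $\omega(|b-a|)\,|b-a|^m$ rather than merely $|b-a|^{m+1}$. This is precisely where the concavity estimate $\omega(t)\gtrsim t$ on $[0,\diam K]$ is used, and where one must check that the velocity term $V^m_\omega$ is comparable to $\omega(|b-a|)\,|b-a|^m$ on the compact set $K$. The product-truncation identity and the local regularity of $\phi$ are routine but should be stated with care.
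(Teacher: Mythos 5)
Your argument is correct and is essentially the proof the paper delegates to Lemmas 3.5 and 3.6 of \cite{ZimFinite}: prescribe the height jet on $K$ via the Leibniz formula, use the $A/V$ hypothesis together with $t\lesssim\omega(t)$ on $[0,\diam K]$ and the uniform boundedness of the Taylor data on $K$ to verify the $k=0$ Whitney estimate, and invoke Theorem~\ref{t-WhitClassicLip}. The one detail left implicit is that the hypothesis bounds $A^m(\gamma;a,b)$ only for $a<b$ while the $C^{m,\omega}$ Whitney-field condition at $k=0$ is required for both orderings of $a,b$; this follows from the case $a<b$ combined with your $k\geq 1$ estimates via the standard identity expanding $T_b^m\hat H^0-T_a^m\hat H^0$ in terms of the remainders at $b$.
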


\subsubsection{Discrete Area Discrepancy and Velocity}
\label{discretediscrepency}

\begin{definition}
Let $A \subseteq \R$ and $f:A \to \mathbb{R}$. For any collection of $m+1$ distinct points $X = \{ x_0,\dots,x_m \}$ from $A$, we define the {\em $m$'th divided difference of $f$}, denoted by $f[X] = f[x_0,\dots,x_m]$, inductively as in the following paragraph.

First we define $f[x_0] = f(x_0)$ for any $x_{0}\in A$. Then, once $f[x_0,\dots,x_{k-1}]$ has been defined for any $k$ distinct points from $A$, we define $f[x_0,\dots,x_k]$ for any $k+1$ distinct points from $A$ by
\[f[x_0,\dots,x_k] = \frac{f[x_1,\dots, x_k] - f[x_0,\dots,x_{k-1}]}{x_k-x_0}.\]

For a map $\gamma:A \to \R^3$, we define componentwise $\gamma[X] := (f[X],g[X],h[X])$.
\end{definition}

\begin{definition}
Given $A \subseteq \R$, $f:A \to \R$, and a
finite set $X= \{x_0,\dots,x_k\} \subseteq A$, 
the associated Newton interpolation polynomial is defined as
\begin{align*}
P(X;f)(x)
= f[x_0]+(x-x_0)f[x_0,x_1] 
+ \cdots+ 
(x-x_0)\cdots(x-x_{k-1})f[x_0,\dots,x_k].
\end{align*}
This is the unique polynomial of degree at most $k$ satisfying
$P(x_i) = f(x_i)$ for $i= 0,\dots,k$.
\end{definition}

Such a polynomial is an excellent approximation of $f$ at each of the interpolated points $\{x_0,\dots,x_k\}$. We will state this quantitatively here.
For a proof of this, see \cite[Lemma~2.8]{ZimFinite}.

\begin{lemma}
\label{l-poly}
Suppose 
$\omega$ is a modulus of continuity and
$f \in C^{m,\omega}(I)$ for some compact interval $I$.
There is a constant $C>0$ 
depending only on $m$ and $\Vert f \Vert_{C^{m,\omega}(I)}$
such that, for any
$X \subseteq I$
with $\#X = m+1$ 
and $P = P(X;f)$,
\begin{equation*}
\label{e-poly2}   
\frac{|f(x)-P(x)|}{\diam(X)^{m}} \leq C\omega (\diam(X))
\quad
\text{and}
\quad
\frac{|f'(x)-P'(x)|}{\diam(X)^{m-1}} \leq C\omega (\diam(X))
\end{equation*}
for all $x \in [\min X,\max X]$.
\end{lemma}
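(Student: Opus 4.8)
The plan is to compare the interpolation polynomial $P$ both to $f$ directly, via the Newton interpolation error formula, and to a Taylor polynomial of $f$, so as to exploit the $C^{m,\omega}$ estimates \eqref{e-taylor1} and \eqref{e-taylor2} already recorded. Throughout, write $d = \diam(X)$, $J = [\min X, \max X]$, and $w(x) = \prod_{i=0}^m (x - x_i)$, so that $|w| \le d^{m+1}$ on $J$.

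First I would prove the function estimate. The standard Newton error identity gives $f(x) - P(x) = f[x_0, \dots, x_m, x]\, w(x)$ for every $x$; this is immediate by appending $x$ as an extra interpolation node and evaluating the resulting Newton form at $x$, and when $x \in X$ both sides vanish. The point is to bound the order-$(m+1)$ divided difference $f[x_0,\dots,x_m,x]$ without losing control when the nodes cluster. Using the symmetry of divided differences, I would write it as the difference quotient whose denominator is the full span $\max - \min$ of the $m+2$ points $\{x_0,\dots,x_m,x\}$; since $x \in J$, this span equals exactly $d$. The numerator is a difference of two order-$m$ divided differences, each of which equals $f^{(m)}(\eta)/m!$ for some $\eta \in J$ by the mean value theorem for divided differences (valid since $f \in C^m$). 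Hence $|f[x_0,\dots,x_m,x]| \le \omega(d)\,\Vert f\Vert_{C^{m,\omega}(I)}/(m!\, d)$, and multiplying by $|w| \le d^{m+1}$ yields $|f(x)-P(x)| \le C\omega(d)d^m$ on $J$ with $C = C(m, \Vert f \Vert_{C^{m,\omega}(I)})$.

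For the derivative estimate I would switch to a Taylor comparison. Fix $a \in X$, set $T = T_a^m f$, and let $Q = P - T$, a polynomial of degree at most $m$. Combining the function estimate just proved with \eqref{e-taylor1} (and $|x - a| \le d$, $\omega$ increasing) bounds $|Q(x)| = |P(x) - T(x)|$ by $C\omega(d)d^m$ on all of $J$. Now I would apply Markov's inequality, rescaled to the interval $J$ of length $d$: for a polynomial of degree at most $m$ one has $\max_J |Q'| \le (2m^2/d)\max_J|Q| \le C\omega(d)d^{m-1}$. Finally, $|f'(x) - P'(x)| \le |f'(x) - T'(x)| + |Q'(x)|$, and \eqref{e-taylor2} controls the first term by $C\omega(d)d^{m-1}$, giving the claimed bound.

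The main obstacle is the clustering of the nodes: a direct Lagrange-basis estimate of $P$ degenerates when two points of $X$ lie close relative to $d$, since the Lebesgue constant then blows up. The proposal sidesteps this twice — in the function estimate by always dividing by the full span $d$ in the divided-difference recursion, and in the derivative estimate by invoking Markov's inequality, which depends only on the supremum of $Q$ over $J$ and not on the (possibly tiny) gaps between nodes. The remaining ingredients are the routine mean-value theorem for divided differences and the bookkeeping of constants, all of which depend only on $m$ and $\Vert f\Vert_{C^{m,\omega}(I)}$.
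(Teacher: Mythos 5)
Your proof is correct. The paper does not actually reproduce an argument for this lemma --- it defers to \cite[Lemma~2.8]{ZimFinite} --- so there is no in-text proof to match against, but your write-up is a valid self-contained substitute. The two key points both check out: for the zeroth-order bound, the Newton remainder identity $f(x)-P(x)=f[x_0,\dots,x_m,x]\,w(x)$ together with the observation that appending $x\in[\min X,\max X]$ does not enlarge the span, so the outermost divided-difference quotient has denominator exactly $\diam(X)$, and the mean value theorem for divided differences converts the numerator into a difference of values of $D^mf$ at two points of $J$, which is where $\omega$ enters; for the first-order bound, comparing $P$ to $T_a^mf$ and applying Markov's inequality on $J$ (which the paper records anyway as Lemma~\ref{Markov}) cleanly avoids the node-clustering issue. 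A more common route for the derivative estimate is to note that $f'-P'$ vanishes at $m$ points of $J$ by Rolle, so $P'$ is itself the interpolant of $f'\in C^{m-1,\omega}$ at those points, and then to reapply the zeroth-order bound one order down; your Markov-based argument is an acceptable alternative and arguably shorter since it reuses estimates \eqref{e-taylor1}--\eqref{e-taylor2} already in the paper. The only points worth making explicit in a final write-up are that \eqref{e-taylor1} and \eqref{e-taylor2} are applied to an extension $F\in C^{m,\omega}(\R)$ of $f$ with seminorm within a factor of two of $\Vert f\Vert_{C^{m,\omega}(I)}$ (so the constants depend only on $m$ and that seminorm, as claimed), and that the case $x\in X$ of the Newton identity is interpreted as both sides vanishing.
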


We now use interpolation polynomials to define new discrete versions of the area discrepancy and velocity. The advantage of these discrete versions is the absence of any derivatives of $f$ and $g$ from their definitions. This allows us to discuss the ``horizontality'' of a continuous curve without knowing anything about its derivatives.

\begin{definition}
Fix $E \subseteq \R$ and $\gamma=(f,g,h):E \to \mathbb{R}^3$.
Suppose $X \subseteq E$ with $\# X = m+1$, 
and 
set
$P_f = P(X;f)$ and $P_g = P(X;g)$.
For any $a,b \in X$, define the {\em discrete area discrepancy} $A[X,\g;a,b]$ 
and {\em discrete $\omega$-velocity} $V_{\omega} [X,\g;a,b]$ as follows:
\begin{align*}
A[X,\g;a,b] &= h(b) - h(a) - 2 \int_{a}^{b} (P_f'P_g - P_g'P_f),\\
V_{\omega}[X,\g;a,b] &= \omega(\diam X)^2 (\diam X)^{2m} + \omega(\diam X) (\diam X)^m \int_a^b \left(|P_f'|+ |P_g'|\right).
\end{align*}
\end{definition}



Again, the area and velocity terms are left invariant \cite[Lemma 4.3]{ZimFinite}.

\begin{lemma}
\label{l-AVleftinvDisc}
Suppose $\g :X \to \H$ for some
$X \subset \R$ with $\# X = m+1$.
Fix $a,b \in X$ and $p \in \H$.
Then 
$$
A[X,p *\g;a,b] = A[X,\g;a,b]
\quad
\text{and}
\quad
V_{\omega}[X,p * \g;a,b] = V_{\omega}[X,\g;a,b].
$$
\end{lemma}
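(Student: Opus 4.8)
The plan is to reduce everything to the explicit action of left translation on the three coordinates together with two elementary properties of Newton interpolation. Writing $p = (p_1,p_2,p_3)$ and $\g = (f,g,h)$, the group law gives $p * \g = (\tilde f, \tilde g, \tilde h)$ with $\tilde f = p_1 + f$, $\tilde g = p_2 + g$, and $\tilde h = p_3 + h + 2(p_2 f - p_1 g)$. The first thing I would record is how interpolation interacts with these formulas. Since $f \mapsto P(X;f)$ is linear and reproduces constants exactly, we obtain $P_{\tilde f} = p_1 + P_f$ and $P_{\tilde g} = p_2 + P_g$; differentiating kills the added constants, so $P_{\tilde f}' = P_f'$ and $P_{\tilde g}' = P_g'$.

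With this observation the velocity invariance is immediate: $V_{\omega}[X,\g;a,b]$ depends on $\g$ only through $\diam X$ and the integrand $|P_f'|+|P_g'|$, both of which are unchanged under translation.

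For the area discrepancy I would substitute the translated quantities into the definition and track the extra terms. Expanding the integrand, $P_{\tilde f}' P_{\tilde g} - P_{\tilde g}' P_{\tilde f} = (P_f' P_g - P_g' P_f) + p_2 P_f' - p_1 P_g'$, so the translated integral differs from the original by $2p_2 \int_a^b P_f' - 2p_1 \int_a^b P_g'$. The key point here is that $a,b \in X$ are interpolation nodes, so $P_f(a)=f(a)$, $P_f(b)=f(b)$, and likewise for $g$; hence $\int_a^b P_f' = f(b)-f(a)$ and $\int_a^b P_g' = g(b)-g(a)$. On the other hand, $\tilde h(b)-\tilde h(a) = h(b)-h(a) + 2p_2(f(b)-f(a)) - 2p_1(g(b)-g(a))$. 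Subtracting, the $p_1$ and $p_2$ terms cancel exactly, leaving $A[X, p*\g;a,b] = A[X,\g;a,b]$.

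The computation is entirely routine once the two interpolation facts are in place; the only thing to be careful about is the bookkeeping in the cancellation, and in particular the use of $a,b \in X$ to evaluate the integrals of $P_f'$ and $P_g'$. I expect no genuine obstacle, since this mirrors the continuous statement Lemma~\ref{l-AVleftinv}, with the interpolation polynomials $P_f$ and $P_g$ playing the role there played by the Taylor polynomials $T_a F$ and $T_a G$.
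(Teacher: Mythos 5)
Your computation is correct: the two interpolation facts (linearity of $f \mapsto P(X;f)$ and exact reproduction of constants) give $P_{\tilde f}' = P_f'$ and $P_{\tilde g}' = P_g'$, and the use of $a,b \in X$ as nodes to evaluate $\int_a^b P_f'$ and $\int_a^b P_g'$ is exactly what makes the cross terms cancel against those produced by $\tilde h$. The paper does not prove this lemma itself but defers to \cite[Lemma~4.3]{ZimFinite}; your direct verification is the standard argument and is precisely the discrete analogue of Lemma~\ref{l-AVleftinv}, as you note.
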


\subsection{Lusin Approximation}

Finally, we introduce the Lusin approximation properties that we will consider and the conditions that will be used to obtain them.

\begin{definition}\label{Lusinm}
A horizontal curve $\Gamma\colon [a,b]\to \mathbb{H}$ has the \emph{Lusin property of order $m$ and modulus of continuity $\omega$} if, for every $\epsilon>0$, there exists a $C^{m,\omega}$ horizontal curve $\widetilde{\Gamma}\colon [a,b]\to \mathbb{H}$ such that
\[\mathcal{L}^{1}(\{x\in [a,b]: \widetilde{\Gamma}(x)\neq \Gamma(x)\})<\varepsilon.\]
We also refer to this as the $(m,\omega)$-Lusin property.

A horizontal curve $\Gamma\colon [a,b]\to \mathbb{H}$ has the \emph{Lusin property of order $\infty$} if, for every $\epsilon>0$, there exists a $C^{\infty}$ horizontal curve $\widetilde{\Gamma}\colon [a,b]\to \mathbb{H}$ such that
\[\mathcal{L}^{1}(\{x\in [a,b]: \widetilde{\Gamma}(x)\neq \Gamma(x)\})<\varepsilon.\]
We also refer to this as the $\infty$-Lusin property.
\end{definition}

The following $L^{1}$ differentiability condition was used in \cite{CapPinSpe} to prove a Lusin result for horizontal curves of class $C^{m}$.

\begin{definition}\label{L1}
A measurable map $u:[a,b]\to \mathbb{R}$ is \emph{$m$-times $L^{1}$ differentiable at a point $x\in (a,b)$} if there exists a polynomial $P_{u,x}^m\colon \mathbb{R}\to \mathbb{R}$ of degree at most $m$ such that:
     \[
\dashint_{B(x,\rho)} |u(y)-P_{u,x}^m(y)|\, dy = o(\rho^m).
\]
\end{definition}

Motivated by this, we introduce the following definition.

\begin{definition}\label{L1w}
A measurable map $u:[a,b]\to \mathbb{R}$ is \emph{$m$-times $L^{1,\omega}$ differentiable at $x\in (a,b)$} if there exists a polynomial $P_{u,x}^m\colon \mathbb{R}\to \mathbb{R}$ of degree at most $m$ and constants $C>0$ and $\rho_0>0$ with $B(x,\rho_{0})\subset (a,b)$ such that 
\[
\dashint_{B(x,\rho)} |u(y)-P_{u,x}^m(y)|\, dy\leq C\omega(\rho)\rho^m
\text{ for all } 0<\rho<\rho_0.
\]
\end{definition}

Note that Definition \ref{L1} and Definition \ref{L1w} also make sense for measurable maps $u$ which are defined almost everywhere on $[a,b]$. 

\begin{remark}\label{diffimp}
Suppose a function $u$ is $m$-times $L^{1,\omega}$ differentiable at a point $x$. Then $u$ is also $m$-times $L^{1}$ differentiable and $m$-times approximately differentiable at $x$ with the same choice of polynomial $P_{u,x}^m$ \cite{CapPinSpe}. This follows because $\omega\colon [0,\infty)\to [0,\infty)$ is continuous and $\omega(0)=0$.
\end{remark}

\section{A $C^{m,\omega}$ Finiteness Principle}
\label{s-finiteness}

In this section we prove our first main result (Theorem \ref{t-finiteness}), a $C^{m,\omega}$ version of the finiteness principle from \cite{ZimFinite}. This new result avoids the drop in regularity found therein. 

We first show that the boundedness of the ratios $A/V$ are equivalent, regardless of whether the continuous or discrete area and velocity are used.

\begin{lemma}
\label{l-AVLip2}
Suppose $\gamma \in C^{m,\omega}(\R,\mathbb{R}^3)$, and suppose $K \subseteq \R$ is compact with finitely many isolated points and $\#K \geq m+1$.
Then
there is a constant $C_{AV}>0$ such that
\begin{equation}
\label{e-AVcont}
|A(\g;a,b)| \leq C_{AV} V_{\omega} (\g;a,b)
\quad
\text{for all } a,b \in K
\text{ with } a< b
\end{equation}
if and only if
there is a constant $C_{dAV}>0$ such that,
for any $X \subseteq K$ with $\# X = m+1$,
\begin{equation}
\label{e-AVdisc}
A[X,\g ; a,b] \leq C_{dAV} V_{\omega}[X,\g ; a,b]
\quad
\text{for all } a,b \in X \text{ with } a<b.
\end{equation}

Moreover, suppose $I$ is a compact interval containing $K$.
If \eqref{e-AVcont} holds, then $C_{dAV}$ can be chosen to depend only on $m$, $\Vert f \Vert_{C^{m,\omega}(I)}$, $\Vert g \Vert_{C^{m,\omega}(I)}$, and $C_{AV}$.
If \eqref{e-AVdisc} holds for any $X \subseteq K$ with $\# X = m+1$, then $C_{AV}$ can be chosen to depend only on $m$, $\Vert f \Vert_{C^{m,\omega}(I)}$, $\Vert g \Vert_{C^{m,\omega}(I)}$, and $C_{dAV}$.
\end{lemma}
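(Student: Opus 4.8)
The plan is to compare the continuous quantities, built from the Taylor polynomials $T_a^m f, T_a^m g$, with the discrete ones, built from the Newton interpolation polynomials $P_f = P(X;f)$ and $P_g = P(X;g)$, and to show that the two area discrepancies differ by an amount controlled by the velocity. The two polynomial families are tied together because both approximate $f$ and $g$ on the relevant interval: Lemma~\ref{l-poly} gives $|f-P_f|\leq C\omega(\diam X)(\diam X)^m$ and $|f'-P_f'|\leq C\omega(\diam X)(\diam X)^{m-1}$ on $[\min X,\max X]$, while \eqref{e-taylor1} and \eqref{e-taylor2} give the same bounds with $b-a$ in place of $\diam X$ for the Taylor polynomial at $a$. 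Since $a,b\in X$ forces $b-a\leq\diam X$, on $[a,b]$ we obtain $|T_a^m f - P_f|\leq C\omega(\diam X)(\diam X)^m$ and $|(T_a^m f)'-P_f'|\leq C\omega(\diam X)(\diam X)^{m-1}$, and likewise for $g$, with $C$ depending only on $m$, $\Vert f\Vert_{C^{m,\omega}(I)}$, $\Vert g\Vert_{C^{m,\omega}(I)}$.

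First I would normalize using left invariance. By Lemmas~\ref{l-AVleftinv} and \ref{l-AVleftinvDisc}, for a fixed pair $a<b$ and fixed $X$ all four quantities are unchanged if we replace $\gamma$ by $\gamma(a)^{-1}*\gamma$; this replaces $f,g$ by $f-f(a),g-g(a)$, leaving $\Vert f\Vert_{C^{m,\omega}(I)}$ and $\Vert g\Vert_{C^{m,\omega}(I)}$ unchanged. After this normalization $f(a)=g(a)=0$, so the two correction terms in $A(\gamma;a,b)$ vanish and $T_a^m f, T_a^m g, P_f, P_g$ all vanish at $a$; hence $|T_a^m g(x)|\leq\int_a^b|(T_a^m g)'|$ and $|P_g(x)|\leq\int_a^b|P_g'|$ for $x\in[a,b]$, the estimate needed to control the ``position'' factors by the ``velocity'' integrals. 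Writing the integrand difference as $(T_a^m f)'\,T_a^m g - P_f'P_g = (T_a^m f)'(T_a^m g-P_g)+((T_a^m f)'-P_f')P_g$ (and symmetrically for the other term), the polynomial comparison above yields
\[
\left|A(\gamma;a,b)-A[X,\gamma;a,b]\right| \leq C\,\omega(\diam X)(\diam X)^{m}\int_{a}^{b}\left(|P_f'|+|P_g'|\right) + C\,\omega(\diam X)^{2}(\diam X)^{2m},
\]
so $\left|A(\gamma;a,b)-A[X,\gamma;a,b]\right|\leq C\,V_\omega[X,\gamma;a,b]$. An identical computation, together with $\int_a^b|(T_a^m f)'|\leq\int_a^b|P_f'|+C\omega(\diam X)(\diam X)^{m-1}(b-a)$, shows that $V_\omega(\gamma;a,b)$ and $V_\omega[X,\gamma;a,b]$ are comparable up to such a constant $C$ whenever $\diam X$ and $b-a$ are comparable (using $\omega(\lambda t)\leq\lambda\omega(t)$ for $\lambda\geq 1$, valid since $\omega$ is concave with $\omega(0)=0$).

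For the forward implication \eqref{e-AVcont}$\Rightarrow$\eqref{e-AVdisc}, let $X$ be arbitrary with $\#X=m+1$ and $a,b\in X$, $a<b$. Since $b-a\leq\diam X$ one checks directly that $V_\omega(\gamma;a,b)\leq C\,V_\omega[X,\gamma;a,b]$, whence
\[
\left|A[X,\gamma;a,b]\right| \leq \left|A(\gamma;a,b)\right| + \left|A(\gamma;a,b)-A[X,\gamma;a,b]\right| \leq (C_{AV}C+C)\,V_\omega[X,\gamma;a,b],
\]
giving \eqref{e-AVdisc} with $C_{dAV}$ of the asserted form. Notably, no restriction on $X$ is needed here, so the hypotheses on $K$ play no role in this direction.

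The reverse implication \eqref{e-AVdisc}$\Rightarrow$\eqref{e-AVcont} is the main obstacle, and this is where the hypotheses on $K$ enter. Given $a<b$ in $K$ I must produce $X\subseteq K$ with $a,b\in X$, $\#X=m+1$ and $\diam X\leq 2(b-a)$, since only then is $V_\omega[X,\gamma;a,b]\leq C\,V_\omega(\gamma;a,b)$ and hence $|A(\gamma;a,b)|\leq|A[X,\gamma;a,b]|+C\,V_\omega(\gamma;a,b)\leq C'\,V_\omega(\gamma;a,b)$. If $[a,b]\cap K$ has at least $m+1$ points we take $X\subseteq[a,b]\cap K$ with $\diam X=b-a$; otherwise $[a,b]\cap K$ is finite, and provided $a$ is a limit of points of $K$ from below (or $b$ from above) we adjoin $m-1$ points of $K$ in $(a-(b-a),a)$ (resp. $(b,b+(b-a))$), keeping $\diam X\leq 2(b-a)$. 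The construction fails only when $[a,b]\cap K$ is finite and both $a$ and $b$ are isolated in $K$; since $K$ has finitely many isolated points, there are at most finitely many such exceptional pairs. I expect these finitely many pairs to be the crux: for them $\diam X$ cannot be kept comparable to $b-a$, and one must instead argue directly, for instance by splitting $[a,b]$ at the (finitely many, isolated) intermediate points of $K$ and combining an additivity property of the area discrepancy with the discrete bound on the genuinely small scales, or via a limiting argument as auxiliary points degenerate onto the configuration. Finiteness of the isolated set and compactness of $K$ ensure that the resulting constant still depends only on $m$, $\Vert f\Vert_{C^{m,\omega}(I)}$, $\Vert g\Vert_{C^{m,\omega}(I)}$ and $C_{dAV}$.
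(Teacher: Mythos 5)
Your argument is essentially the paper's: normalize by left translation so that $\gamma(a)=0$ (Lemmas~\ref{l-AVleftinv} and \ref{l-AVleftinvDisc}), compare $P_f,P_g$ with $T_af,T_ag$ on $[a,b]$ by combining Lemma~\ref{l-poly} with \eqref{e-taylor1}--\eqref{e-taylor2}, and conclude that $|A-A_X|$ and $|V-V_X|$ are dominated by $V_X$ (for \eqref{e-AVcont}$\Rightarrow$\eqref{e-AVdisc}) or by $V$ (for the converse, once $\diam X\leq 2(b-a)$). Your use of $P_g(a)=T_ag(a)=0$ and the fundamental theorem of calculus to bound $\int_a^b|P_g|$ and $\int_a^b|T_ag|$ by $(b-a)\int_a^b|P_g'|$, etc., replaces the paper's appeal to Corollary~2.11 of \cite{ZimSpePinWhitney}, but the content is identical. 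The one step you leave open --- pairs $a<b$ for which $[a,b]\cap K$ contains fewer than $m+1$ points and both $a$ and $b$ are isolated in $K$ --- is precisely the case the paper dispatches with the sentence ``we may assume that either $a$ or $b$ is a limit point of $K$.'' You can close it the way you suspect: since $K$ has only finitely many isolated points there are only finitely many such pairs, and for each one $V_{\omega}(\g;a,b)\geq\omega(b-a)^2(b-a)^{2m}>0$ while $|A(\g;a,b)|$ is finite, so these pairs are absorbed into $C_{AV}$ by taking a maximum; no additivity or limiting argument is needed. The honest caveat (which applies equally to the published proof) is that this contribution to $C_{AV}$ is not visibly controlled by $m$, $\Vert f\Vert_{C^{m,\omega}(I)}$, $\Vert g\Vert_{C^{m,\omega}(I)}$, and $C_{dAV}$ alone, so the ``moreover'' clause is only clean off the exceptional pairs; in the application (Theorem~\ref{t-finiteness}) only the existence of some constant is used, so nothing downstream is affected.
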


\begin{proof}
Assume first that 
\eqref{e-AVdisc} holds for any $X \subseteq K$ with $\# X = m+1$.
We may clearly assume that $K$ is not a finite set.
Let $a,b \in K$ with $a<b$.
By our assumption on $K$, 
we may assume that either $a$ or $b$ is a limit point of $K$.
Thus we can choose a finite set $X$ consisting of $a$, $b$, and $m-1$ other distinct points in $K$ within $(b-a)/2$ of $a$ or $b$.
In particular, $\diam X \leq 2(b-a)$.

By Lemmas~\ref{l-AVleftinv} and \ref{l-AVleftinvDisc},
we may assume without loss of generality that $\g(a) = 0$.
For simplicity, 
write $A = A(\g;a,b)$ and $V=V_{\omega}(\g;a,b)$,
and write
$A_X = A[X,\g;a,b]$ and $V_X = V_{\omega}[X,\g;a,b]$.
Note that
\begin{align}
\left| \frac{A}{V} \right |
\leq \left| \frac{A}{V} - \frac{A_X}{V_X}\right| + \left| \frac{A_X}{V_X}\right|
\leq
\left| \frac{A_X}{V_X} \right|
\left| \frac{V - V_X}{V}\right|
+
\left| \frac{A - A_X}{V}\right|
+
\left| \frac{A_X}{V_X}\right|.
\label{e-AV2}
\end{align}
Notice first that $|A_X/V_X| \leq C_{dAV}$.
We will now provide bounds for the remaining terms. 
The proof of this is similar to that of Lemma~4.4 in \cite{ZimFinite}.
As such, we will leave out some of the details.
We begin by writing
\begin{align}
\label{e-AminusA}
A - A_X 
= 
2 \int_a^b \left[ (P_f'P_g  - (T_af)'T_ag) + ((T_ag)'T_af - P_g'P_f) \right]
\end{align}
and
\begin{align*}
P_f'P_g  - (T_af)'T_ag
=
(T_af)'(P_g - T_ag)
&+T_ag(P_f'  - (T_af)')
\\
&\quad +(P_f' - (T_af)')(P_g-T_ag).
\end{align*}
Suppose $I$ is a compact interval containing $K$.
Properties \eqref{e-taylor1} and \eqref{e-taylor2} and Lemma~\ref{l-poly} give
a constant $C>0$ depending only on $m$, $\Vert f \Vert_{C^{m,\omega}(I)}$, $\Vert g \Vert_{C^{m,\omega}(I)}$, and $C_{dAV}$ such that
\begin{align}
\label{e-1st}
|P_g  - T_ag|
\leq
|P_g - g| + |g - T_ag|
\leq
C \omega(\diam X) (\diam X)^m
\end{align}
and
\begin{align}
\label{e-2nd}
|P_f' - (T_af)'|
\leq
|P_f' - f'| + |f' - (T_af)'|
\leq
C \omega(\diam X) (\diam X)^{m-1}
\end{align}
on $[a,b]$.
Therefore, 
\begin{align}
\label{e-3rd}
|(T_af)' - P_f'||P_g-T_ag| \leq C^2 \omega(\diam X)^2 (\diam X)^{2m-1}
\end{align}
on $[a,b]$.
Equations \eqref{e-1st}, \eqref{e-2nd}, and \eqref{e-3rd} together 
with the bound $\diam X \leq 2(b-a)$ and the fact that $\omega$ is increasing and subadditive
imply
\begin{align*}
    \int_a^b |P_f'P_g  - (T_af)'T_ag|
    &\lesssim_C 
    \omega(b-a)^2 (b-a)^{2m} \\
    &\qquad +
    \omega(b-a) (b-a)^m \int_a^b |(T_af)'| +
    \omega(b-a) (b-a)^{m-1} \int_a^b |T_ag| \\
    &\lesssim_{C,m}
     \omega(b-a)^2 (b-a)^{2m}
     +
     \omega(b-a) (b-a)^m \int_a^b |(T_af)'| + |(T_ag)'|.
\end{align*}
In the last line, we
applied Corollary~2.11 in \cite{ZimSpePinWhitney}
to the polynomial $T_ag$ as in the proof of \cite[Lemma~4.4]{ZimFinite}.
Arguing similarly for the other term in \eqref{e-AminusA}, we may conclude that 
$
|A - A_X |
\lesssim_{C,m}
V
$.

Moreover, we have
\begin{align*}
    |V - V_X| 
    &\leq 
    \omega (b-a)^2 (b-a)^{2m}
    +
    \omega(\diam X)^2 (\diam X)^{2m}\\
    &\hspace{.5in}
    +
    \left|\omega(b-a)(b-a)^m - \omega(\diam X)(\diam X)^m\right|
    \int_a^b\left(|(T_af)'| + |(T_ag)'|\right)\\
    & \hspace{.5in}
    +
    \omega(\diam X) (\diam X)^m\int_a^b\left||(T_af)'| - |P_f'|+ |(T_ag)'| - |P_g'| \right|\\
    &\lesssim_{C,m}
    \omega(b-a)^2(b-a)^{2m} + \omega(b-a)(b-a)^m
    \int_a^b\left(|(T_af)'| + |(T_ag)'|\right) = V.
\end{align*}
In the last line, we applied \eqref{e-2nd}.
These are the desired bounds for \eqref{e-AV2}, so \eqref{e-AVcont} holds.

To prove the reverse implication, assume now that \eqref{e-AVcont} holds on $K$.
Suppose $X$ is a set of $m+1$ distinct points in $K$, and choose $a,b \in X$ with $a < b$.
As above,
assume that $\g(a) = 0$, and write $A$, $V$, $A_X$, and $V_X$ as before.
We now write
\begin{align}
\left| \frac{A_X}{V_X} \right |
\leq \left| \frac{A_X}{V_X} - \frac{A}{V}\right| + \left| \frac{A}{V}\right|
\leq
\left| \frac{A - A_X}{V_X}\right|
+
\left| \frac{A}{V} \right|
\left| \frac{V - V_X}{V_X}\right|
+ \left| \frac{A}{V}\right|.
\label{e-AV1}
\end{align}
Here, $|A/V| \leq C_{AV}$.
It remains to bound the other terms.
We once again have \eqref{e-AminusA}.
This time, we write
\begin{align*}
P_f'P_g  - (T_af)'T_ag
=
P_f'(P_g - T_ag)
&+P_g(P_f'  - (T_af)')
\\
&\quad -(P_f' - (T_af)')(P_g-T_ag).
\end{align*}
Exactly as above, we have the estimates \eqref{e-1st} through \eqref{e-3rd}
for a constant $C>0$
depending only on $m$, $\Vert f \Vert_{C^{m,\omega}(I)}$, $\Vert g \Vert_{C^{m,\omega}(I)}$, and $C_{AV}$,
and we can apply Corollary~2.11 from \cite{ZimSpePinWhitney} to $P_g$ to conclude that 
\begin{align*}
    \int_a^b |P_f'P_g  - (T_af)'T_ag|
    \lesssim_{C,m}
     \omega(\diam X)^2 (\diam X)^{2m}
     +
     \omega(\diam X) (\diam X)^m \int_a^b |P_f'| + |P_g'|.
\end{align*}
A similar estimate again holds for the other term in \eqref{e-AminusA}, so we may conclude that $
|A - A_X |
\lesssim_{C,m} V_X$.
Since $b-a \leq \diam X$, we can also argue as before to conclude that 
\begin{align*}
    |V - V_X| 
    \lesssim_{C,m}
    \omega(\diam X)^2(\diam X)^{2m} + \omega(\diam X)(\diam X)^m
    \int_a^b\left(|P_f'| + |P_g'|\right) = V_X.
\end{align*}
This proves that \eqref{e-AVdisc} holds for any $X \subseteq K$ with $\# X = m+1$, and the proof is complete.
\end{proof}

We now establish our first main result.
Again, the main difference between this result and Theorem~1.7 in \cite{ZimFinite} is the fact that our extension now has $C^{m,\omega}$ regularity rather than only $C^{m,\sqrt{\omega}}$ regularity.
While the proof of this is similar to that of \cite[Theorem~1.7]{ZimFinite}, we must be more careful with our treatment of the constants.

\begin{theorem}
\label{t-finiteness}
Assume $K \subseteq \mathbb{R}$ is compact
with finitely many isolated points
and $\# K \geq m+2$
for some positive integer $m$.
Suppose $\g:K \to \H$ is continuous.
Then there is a horizontal curve $\Gamma \in C^{m,\omega}(\R,\R^3)$
with $\Gamma|_K = \g$
if and only if
there exists a constant $M > 0$ such that,
for any $X \subseteq K$ with $\#X = m+2$,
there is a curve $\Gamma_X \in C^{m,\omega}(\R,\R^3)$
with $\Gamma_X = \gamma$ on $X$,
$
\Vert \Gamma_X \Vert_{C^{m,\omega}(\R,\R^3)}  \leq M,
$
and
$$
\left| A(\Gamma_X;a,b) \right| \leq M V_{\omega}(\Gamma_X;a,b)
\quad \text{for all } a,b \in K \text{ with } a<b.
$$
\end{theorem}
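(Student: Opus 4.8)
The plan is to dispatch the forward implication immediately and to reduce the backward implication to producing, on $K$, jet data satisfying the hypotheses of the $C^{m,\omega}$ Heisenberg Whitney extension theorem (Theorem~\ref{t-HeisWhitLip}); the only nontrivial hypothesis, the area/velocity bound, will be obtained from the local data through the discrete-to-continuous comparison of Lemma~\ref{l-AVLip2}. For the forward direction, suppose a horizontal $\Gamma \in C^{m,\omega}(\R,\R^3)$ with $\Gamma|_K = \g$ exists. One simply takes $\Gamma_X = \Gamma$ for every $X$: since $X \subseteq K$ we have $\Gamma_X = \g$ on $X$; the norm bound holds with $M = \Vert\Gamma\Vert_{C^{m,\omega}(\R,\R^3)}$; and the ``only if'' direction of Theorem~\ref{t-HeisWhitLip}, applied to the jet $(D^k\Gamma|_K)_{k=0}^m$, supplies a constant $\hat C$ with $|A(\Gamma;a,b)| \le \hat C V_\omega(\Gamma;a,b)$ on $K$. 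Enlarging $M$ to exceed $\hat C$ finishes this direction.

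For the backward direction, first apply the classical finiteness principle (Theorem~\ref{t-brush}) separately to each of the three coordinate functions of $\g$: the coordinates of the local curves $\Gamma_X$ provide, for every $X$ with $\#X = m+2$, a $C^{m,\omega}(\R)$ interpolant of that coordinate with seminorm $\le M$, so Theorem~\ref{t-brush} yields $\tilde f, \tilde g, \tilde h \in C^{m,\omega}(\R)$ with $(\tilde f, \tilde g, \tilde h)|_K = \g$. Write $\tilde{\g} = (\tilde f, \tilde g, \tilde h)$ and fix a compact interval $I \supseteq K$. The key step is to establish a continuous area/velocity bound $|A(\tilde{\g};a,b)| \le C_{AV} V_\omega(\tilde{\g};a,b)$ for $a,b \in K$. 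By Lemma~\ref{l-AVLip2} it suffices to prove the discrete bound \eqref{e-AVdisc} for $\tilde{\g}$. Fix $X \subseteq K$ with $\#X = m+1$ and, using $\#K \ge m+2$, enlarge it to $X' \subseteq K$ with $\#X' = m+2$. The local curve $\Gamma_{X'}$ satisfies \eqref{e-AVcont} on $K$ with constant $M$, so the quantitative ``moreover'' clause of Lemma~\ref{l-AVLip2} gives the discrete bound for $\Gamma_{X'}$ with a constant $C_{dAV}$ depending only on $m$, on $\Vert(\Gamma_{X'})_1\Vert_{C^{m,\omega}(I)}$ and $\Vert(\Gamma_{X'})_2\Vert_{C^{m,\omega}(I)}$ (both $\le M$), and on $M$ — hence on $m$ and $M$ alone. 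Because the discrete area and velocity depend only on the values of the curve on $X$, and $\tilde{\g} = \g = \Gamma_{X'}$ on $X \subseteq X'$, this discrete bound transfers verbatim to $\tilde{\g}$ on $X$, with the uniform constant $C_{dAV}(m,M)$. As $X$ was arbitrary, \eqref{e-AVdisc} holds for $\tilde{\g}$, and the reverse direction of Lemma~\ref{l-AVLip2} returns the desired continuous bound \eqref{e-AVcont} for $\tilde{\g}$.

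With this bound in hand, Lemma~\ref{l-horizLip} (together with the $C^{m,\omega}$ Heisenberg Whitney extension Theorem~\ref{t-HeisWhitLip} applied to the resulting jet on $K$) produces $\hat h \in C^{m,\omega}(\R)$ with $\hat h|_K = h$ such that $\Gamma := (\tilde f, \tilde g, \hat h)$ is a horizontal curve in $C^{m,\omega}(\R,\R^3)$; since $\hat h|_K = h$ we obtain $\Gamma|_K = \g$, as required.

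The main obstacle is the uniform control of constants in the middle paragraph: the comparison constant $C_{dAV}$ must not depend on the set $X'$ (only on $m$ and $M$), which is exactly why the quantitative bookkeeping in Lemma~\ref{l-AVLip2} is essential and why, in contrast to \cite{ZimFinite}, no loss from $\omega$ to $\sqrt{\omega}$ occurs. A secondary point requiring care is checking that the third-coordinate (Leibniz) jet forced by horizontality assembles into a genuine $C^{m,\omega}$ Whitney field and that the resulting curve is horizontal on all of $\R$ rather than merely on $K$; this is precisely what the area/velocity bound, fed into Lemma~\ref{l-horizLip} and Theorem~\ref{t-HeisWhitLip}, guarantees.
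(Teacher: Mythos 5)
Your proposal is correct and follows essentially the same route as the paper: the classical Brudnyi--Shvartsman finiteness principle applied coordinatewise to produce $\tilde\gamma$, the transfer of the area/velocity bound from the local curves $\Gamma_{X'}$ to $\tilde\gamma$ via the discrete quantities and the quantitative two-way comparison of Lemma~\ref{l-AVLip2}, then Lemma~\ref{l-horizLip} followed by Theorem~\ref{t-HeisWhitLip}. The only cosmetic slip is calling $(\tilde f,\tilde g,\hat h)$ itself the horizontal extension --- it is horizontal only on $K$, and the actual extension is the output of Theorem~\ref{t-HeisWhitLip} --- but you flag exactly this point in your closing paragraph, so the argument is complete.
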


\begin{proof}
Necessity is proven in Proposition~3.2 from \cite{ZimSpeWhitney}.
We need only prove sufficiency.
Suppose $K \subseteq \R$ is compact with finitely many isolated points and $\#K \geq m+2$.
Suppose $M>0$ and $\gamma:K\to \R^3$ satisfy the following:
for any $X \subseteq K$ with $\#X = m+2$,
there is a function $\Gamma_X \in C^{m,\omega}(\R,\R^3)$
such that $\Gamma_X = \gamma$ on $X$,
$
\Vert \Gamma_X \Vert_{C^{m,\omega}(\R,\R^3)} \leq M,
$
and
\begin{equation}
\label{e-assump}    
\left| A(\Gamma_X;a,b) \right| \leq M V_{\omega}(\Gamma_X;a,b)
\quad \text{ for all } a,b \in X \text{ with } a<b.
\end{equation}
As in the proof of \cite[Theorem~1.7]{ZimFinite}, it follows from 
\cite[Theorem~A]{BruShv} that 
there is some $\tilde{\g} =(f,g,h) \in C^{m,\omega}(\R,\R^3)$ such that $\tilde{\g}|_K = \g$.

We will now observe that 
$\tilde{\gamma}$ satisfies 
\eqref{e-AVdisc} for any $X \subseteq K$ with $\# X = m+1$.
Indeed, choose a set $X \subset K$ with $\# X = m+1$, and fix $a,b \in X$ with $a < b$.
Set $Y = X \cup \{ x \}$ for some $x \in K \setminus X$,
and choose $\Gamma_Y$ as in the hypothesis.
In particular, $\Gamma_Y$ satisfies \eqref{e-AVcont} on $Y$
with constant $C_{AV}=M$.
Lemma~\ref{l-AVLip2} implies that $\Gamma_Y$ satisfies 
\eqref{e-AVdisc} on $X$ as well with a constant $M'>0$ depending only on
$m$, $\Vert f \Vert_{C^{m,\omega}(I)}$, $\Vert g \Vert_{C^{m,\omega}(I)}$, and $M$.
Therefore, since $\Gamma_Y = \gamma = \tilde{\gamma}$ on $K$, we have
$$
\left| A[X,\tilde{\g};a,b] \right| = \left| A[X,\Gamma_Y;a,b] \right| \leq M'  V_{\omega} [X,\Gamma_Y;a,b]
= M'  V_{\omega} [X,\tilde{\g};a,b].
$$
Therefore, $\tilde{\gamma}$ satisfies \eqref{e-AVdisc},
so Lemma~\ref{l-AVLip2} implies that $\tilde{\gamma}$ satisfies \eqref{e-AVcont} on $K$
with a constant depending only on $m$, $\Vert f \Vert_{C^{m,\omega}(I)}$, $\Vert g \Vert_{C^{m,\omega}(I)}$, and $M'$.
Lemma~\ref{l-horizLip} then allows us to choose
some
$\hat{h} \in C^{m,\omega}(\mathbb{R})$
such that $\hat{h}|_K = h|_K$ and
$\hat{h}' =  2(f'g-fg')$ on $K$.

Set $\hat{\g} = (f,g,\hat{h})$.
Thus the collections
$\left( D^k \hat{\g} \right)_{k=0}^m$ on $K$
satisfy
conditions {\em (1)} and {\em (2)} from  Theorem~\ref{t-HeisWhitLip}.
Fix $a,b \in K$ with $a<b$.
Notice that $A(\hat{\g};a,b) = A(\tilde{\g};a,b)$ since $\hat{h} = h$ on $K$.
Also, $V_{\omega}(\hat{\g};a,b) = V_{\omega}(\tilde{\g};a,b)$.
Since $\tilde{\gamma}$ satisfies \eqref{e-AVcont} on $K$, it follows immediately that $\hat{\g}$ satisfies \eqref{e-AVcont} on $K$ also
i.e. $\hat{\g}$ satisfies {\em (3)} from Theorem~\ref{t-HeisWhitLip}.
We may therefore apply Theorem~\ref{t-HeisWhitLip} to find
a horizontal curve $\Gamma \in C^{m,\omega}(\R,\R^3)$ such that $\Gamma|_K = \hat{\g}|_K = \tilde{\g}|_K = \g$.
\end{proof}

\section{$C^{m,\omega}$ Lusin Approximation of Curves}
\label{s-lusin}

In this section we prove our second main result (Theorem \ref{t-Lusin}), a $C^{m,\omega}$ Lusin approximation result for horizontal curves. This is an analogue of a similar result in the $C^{m}$ setting in \cite{CapPinSpe}. We begin with some simple remarks.

\begin{remark}\label{gain}
Let $m\in \mathbb{N}$ and $\omega(t)= t$. Suppose a function $u\colon [a,b]\to \mathbb{R}$ is $m$-times $L^{1}$ differentiable at a point $x\in (a,b)$ with associated polynomial $P^m_{u,x}(y)=\sum_{i=0}^m \frac{a_i(x)}{i!}(y-x)^i$ with $a_i(x)\in \mathbb{R}$. Then $u$ is also $(m-1)$-times $L^{1,\omega}$ differentiable at $x$ with polynomial $P^{m-1}_{u,x}(y)=\sum_{i=0}^{m-1} \frac{a_i(x)}{i!}(y-x)^i$.

To see this, it follows from the definition that there exists $\rho_0>0$ with $B(x,\rho_{0})\subset (a,b)$ such that for all $0<\rho<\rho_0$, we have
    \[
\dashint_{B(x,\rho)} |u(y)-P_{u,x}^m(y)|\, dy\leq \rho^m.
    \]
 Then we have,
\begin{align*}
\dashint_{B(x,\rho)} |u(y)-P_{u,x}^{m-1}(y)|\, dy&\leq \dashint_{B(x,\rho)} |u(y)-P_{u,x}^{m}(y)|\, dy + \frac{a_m(x)}{m!}\dashint_{B(x,\rho)} |y-x|^m\, dy\\
&\leq \rho^{m} + \frac{a_m(x)}{m!}\rho^m 
\leq \left(1 + \frac{a_m(x)}{m!}\right) \omega(\rho) \rho^{m-1} 
\end{align*}
where 
we recall that $\omega(t)=t$.
\end{remark}

\begin{remark}\label{rem44}
Suppose $u\colon [a,b]\to \mathbb{R}$ is $m$-times $L^{1}$ differentiable almost everywhere in $[a,b]$ with associated polynomial $P^m_{u,x}$ described above.
Then the coefficient functions $x \mapsto a_i(x)$ are measurable for $i = 0,1,\dots,m$ almost everywhere on $[a,b]$.
Indeed, 
it follows as in Remark~2.3 from \cite{AlbBiaCri} that $u$ is $m$-times approximately differentiable with the same polynomials $P^m_{u,x}$, and the proof of the almost everywhere measurability of $a_i$ in this setting can be found on page 194 of \cite{LT94}. A similar remark holds for $L^{1,\omega}$ differentiability.
\end{remark}

\begin{remark}\label{rem3}
Let $m\in\mathbb{N}$ and $\omega(t)=t$. Suppose that a function $u:[a,b]\to\mathbb{R}$ is $m$-times $L^{1,\omega}$ differentiable at a point $x\in (a,b)$ with polynomial $P^m_{u,x}(y)=\sum_{i=0}^m \frac{a_i(x)}{i!}(y-x)^i$. Then $u$ is $(m-1)$-times $L^{1,\omega}$ differentiable at $x$ with polynomial $P^{m-1}_{u,x}(y)=\sum_{i=0}^{m-1} \frac{a_i(x)}{i!}(y-x)^i$. 

Indeed, it follows from the definitions that $u$ is $m$-times $L^{1}$ differentiable at $x$ with the same polynomial $P^m_{u,x}(y)$. It then follows from Remark \ref{gain} that $u$ is $(m-1)$-times $L^{1,\omega}$ differentiable at $x$ with polynomial $P^{m-1}_{u,x}(y)$.

\end{remark}

The proof of the following lemmas are almost the same as that of \cite[Lemma 3.1]{CapPinSpe} and \cite[Lemma 3.2]{CapPinSpe}. We provide the proofs for completeness.
Recall that $\omega$ is once again an arbitrary modulus of continuity.

\begin{lemma}\label{intL1}
Let $f\colon [a,b]\to \mathbb{R}$ be absolutely continuous and $m\geq 2$.

Suppose $f'$ is $m-1$ times $L^{1,\omega}$ differentiable at a point $x\in (a,b)$ with $L^{1,\omega}$ derivative 
$P_{f,x}^{m-1}$ of degree at most $m-1$. Then $f$ is $m$ times $L^{1,\omega}$ differentiable at $x$ with $L^{1,\omega}$ derivative $Q_{f,x}^{m}$ of degree at most $m$ defined by \[Q_{f,x}^m(y):=f(x)+\int_{x}^{y}P_{f,x}^{m-1}(t)\, dt.\]
\end{lemma}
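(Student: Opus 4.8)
The plan is to combine the fundamental theorem of calculus with the defining inequality for the $L^{1,\omega}$ differentiability of $f'$. First I would observe that, since $P_{f,x}^{m-1}$ has degree at most $m-1$, its antiderivative
\[
Q_{f,x}^m(y) = f(x) + \int_x^y P_{f,x}^{m-1}(t)\,dt
\]
is a polynomial of degree at most $m$ satisfying $Q_{f,x}^m(x) = f(x)$ and $(Q_{f,x}^m)' = P_{f,x}^{m-1}$. This is exactly the candidate $L^{1,\omega}$ derivative of order $m$ that Definition~\ref{L1w} requires us to test.

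Next, because $f$ is absolutely continuous we have $f(y) = f(x) + \int_x^y f'(t)\,dt$ for every $y \in [a,b]$, so subtracting the two integral expressions yields the key identity
\[
f(y) - Q_{f,x}^m(y) = \int_x^y \bigl( f'(t) - P_{f,x}^{m-1}(t) \bigr)\,dt.
\]
For $y \in B(x,\rho)$ the segment of integration $[x,y]$ (or $[y,x]$) lies inside $B(x,\rho)$, so the inner integral is controlled by the full integral over the ball, giving the pointwise bound
\[
\bigl| f(y) - Q_{f,x}^m(y) \bigr| \leq \int_{B(x,\rho)} \bigl| f'(t) - P_{f,x}^{m-1}(t) \bigr|\,dt.
\]
Since the right-hand side no longer depends on $y$, averaging over $y \in B(x,\rho)$ and rewriting the right-hand side as $2\rho$ times an average produces
\[
\dashint_{B(x,\rho)} \bigl| f(y) - Q_{f,x}^m(y) \bigr|\,dy \leq 2\rho \dashint_{B(x,\rho)} \bigl| f'(t) - P_{f,x}^{m-1}(t) \bigr|\,dt.
\]

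Finally I would invoke the hypothesis: there are constants $C,\rho_0>0$ with $B(x,\rho_0) \subset (a,b)$ such that the last average is at most $C\omega(\rho)\rho^{m-1}$ for all $0<\rho<\rho_0$. Combining this with the previous display gives the bound $2C\omega(\rho)\rho^{m}$, which is precisely the estimate in Definition~\ref{L1w} for $f$ at order $m$, with the same $\rho_0$ and new constant $2C$; this completes the argument. There is no genuine obstacle in this proof. The only point meriting a moment of care is the crude (but sufficient) pointwise bound replacing $\int_x^y$ by $\int_{B(x,\rho)}$, together with the power bookkeeping: the length factor $2\rho$ coming from $B(x,\rho)$ is exactly what upgrades the hypothesis rate $\omega(\rho)\rho^{m-1}$ to the required rate $\omega(\rho)\rho^{m}$.
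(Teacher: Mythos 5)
Your argument is correct and follows essentially the same route as the paper's proof: write $f(y)-Q_{f,x}^m(y)$ as $\int_x^y (f'-P_{f,x}^{m-1})$ via absolute continuity, bound it pointwise by $2\rho$ times the average of $|f'-P_{f,x}^{m-1}|$ over $B(x,\rho)$, and invoke the hypothesis to get the rate $\omega(\rho)\rho^m$ (up to a harmless constant factor). No issues.
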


\begin{proof}
Denote $P=P_{f,x}^{m-1}$ and $Q=Q_{f,x}^m$. From the definition of $L^{1,\omega}$ differentiability, there is $C>0$ and $\rho_0>0$ with $B(x,\rho_{0})\subset (a,b)$ so that for all $0<\rho<\rho_0$,
\begin{equation*}
\dashint_{B(x,\rho)} |f'(t) - P(t)|\, dt \leq C \omega(\rho) \rho^{m-1}.
\end{equation*}
Fixing such a ball $B(x,\rho)$, absolute continuity of $f$ gives for all $y\in B(x,\rho)$
\begin{equation*}
\begin{split}
|f(y) - Q(y)| &= \left| \left(f(x) + \int_x^y f'(t)\, dt\right) - \left( f(x) + \int_x^y P(t)\, dt \right) \right| \\
& = \left| \int_x^y (f'(t) - P(t))\, dt \right| \\
&\leq 2 \rho \, \dashint_{B(x,\rho)} |f'(t) - P(t)|\, dt \\
& \leq C \omega(\rho) \rho^{m}.
\end{split}
\end{equation*}
Hence,
\[\dashint_{B(x,\rho)} |f(y)-Q(y)|\, dy \leq C\omega(\rho)\rho^m.\]
This completes the proof.
\end{proof}

\begin{lemma}\label{vertL1}
Suppose $(f,g,h)\colon [a,b]\to \mathbb{H}$ is a horizontal curve in $\mathbb{H}$ and $f', g'$ are $m-1$ times $L^{1,\omega}$ differentiable at a point $x\in (a,b)$ for some $m\geq 2$. Then $h'$ is $m-1$ times $L^{1,\omega}$ differentiable at $x$. More precisely, denote
\[R:=2(P'Q-Q'P),\]
where $P, Q$ are the $L^{1,\omega}$ derivatives of order $m$ of $f, g$ respectively which exist by Lemma~\ref{intL1}. Let $\widetilde{R}$ be the polynomial of degree at most $m-1$ such that $R(y)-\widetilde{R}(y)$ is divisible by $(y-x)^{m}$. Then $\widetilde{R}$ is the $L^{1,\omega}$ derivative of $h'$ of order $m-1$ at $x$.

\end{lemma}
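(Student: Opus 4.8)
The plan is to use the horizontality condition $h' = 2(f'g - fg')$ together with the $L^{1,\omega}$ differentiability of the factors $f'$, $g$, $f$, $g'$ to show that $h'$ is well-approximated in the $L^1$-average sense by a polynomial, and then to identify that polynomial as the degree-$(m-1)$ truncation $\widetilde R$ of $R = 2(P'Q - Q'P)$. First I would record what is available: by Lemma~\ref{intL1}, since $f'$ and $g'$ are $(m-1)$-times $L^{1,\omega}$ differentiable at $x$, the functions $f$ and $g$ are $m$-times $L^{1,\omega}$ differentiable at $x$, with $L^{1,\omega}$ derivatives $P$ and $Q$ of degree at most $m$ satisfying $P' = P_{f',x}^{m-1}$ and $Q' = P_{g',x}^{m-1}$. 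Thus on each ball $B(x,\rho)$ with $0<\rho<\rho_0$ we have the four estimates
\[
\dashint_{B(x,\rho)} |f - P| \lesssim \omega(\rho)\rho^m,\quad
\dashint_{B(x,\rho)} |f' - P'| \lesssim \omega(\rho)\rho^{m-1},
\]
and the analogous pair for $g - Q$ and $g' - Q'$.

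The main step is to estimate the $L^1$-average of $h' - R$ on $B(x,\rho)$, where $R = 2(P'Q - Q'P)$. I would write the difference $f'g - P'Q$ as a telescoping sum, $f'g - P'Q = (f'-P')g + P'(g - Q)$, and similarly $fg' - PQ' = (f-P)g' + P(g'-Q')$. For each product term, one factor is controlled in $L^1$-average by the estimates above and the other factor should be controlled in $L^\infty$ on $B(x,\rho)$: here I would use that $P, P', Q, Q'$ are polynomials, hence bounded on the fixed ball $B(x,\rho_0)$, and that $g, g'$ (and $f, f'$) are, up to the small $L^1$ error, close to these bounded polynomials. The only delicate point is handling a product where neither factor is a polynomial, e.g. $(f'-P')g$; here I would bound $g$ by $|g - Q| + |Q|$, absorb the polynomial part into the $L^1$-average of $f'-P'$, and control the cross term $\dashint |f'-P'|\,|g-Q|$ using that $|g - Q|$ itself is small in average — though since this is a product of two averages rather than an average of a product, I expect to instead bound $|g - Q|$ pointwise where possible or simply absorb the genuinely quadratic error, which is of order $\omega(\rho)^2\rho^{2m-1}$ and hence negligible compared to $\omega(\rho)\rho^{m-1}$. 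Assembling these, I obtain
\[
\dashint_{B(x,\rho)} |h' - R| \lesssim \omega(\rho)\rho^{m-1}.
\]

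Finally, I would pass from $R$ to its truncation $\widetilde R$. Since $R - \widetilde R$ is divisible by $(y-x)^m$ and $R$ is a polynomial of bounded degree, we have $|R(y) - \widetilde R(y)| \lesssim |y-x|^m \leq \rho^m \leq \omega(\rho)\rho^{m-1}\cdot(\rho/\omega(\rho))$ on $B(x,\rho)$; more directly, $\dashint_{B(x,\rho)} |R - \widetilde R| \lesssim \rho^m$, which is $o(\omega(\rho)\rho^{m-1})$ only if $\rho = o(\omega(\rho))$, so I must be slightly careful. The correct observation is that $\rho^m \leq \rho \cdot \rho^{m-1}$ and, since $\widetilde R$ is defined precisely to be the $(m-1)$-jet of $R$ at $x$, the difference $R - \widetilde R$ vanishes to order $m$, giving the bound $\dashint_{B(x,\rho)} |R - \widetilde R| \lesssim \rho^m \leq C\omega(\rho)\rho^{m-1}$ after noting we only need the estimate up to a constant times $\omega(\rho)\rho^{m-1}$ and $\rho^m/(\omega(\rho)\rho^{m-1}) = \rho/\omega(\rho)$ need not be bounded. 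To avoid this issue I would instead invoke the cleaner fact, used implicitly in \cite{CapPinSpe}, that two polynomials agreeing to order $m-1$ at $x$ differ by a term divisible by $(y-x)^m$ whose $L^1$-average over $B(x,\rho)$ is $O(\rho^m)$, and that $O(\rho^m)$ is absorbed since the definition of $(m-1)$-times $L^{1,\omega}$ differentiability only requires control by $C\omega(\rho)\rho^{m-1}$ with a possibly large constant depending on the fixed radius $\rho_0$. The triangle inequality $\dashint |h' - \widetilde R| \leq \dashint|h'-R| + \dashint|R - \widetilde R| \lesssim \omega(\rho)\rho^{m-1}$ then completes the proof, identifying $\widetilde R$ as the $L^{1,\omega}$ derivative of $h'$ of order $m-1$ at $x$. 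I expect the main obstacle to be the careful bookkeeping of the cross terms in the product estimate and ensuring the truncation error is genuinely absorbed; the overall structure, however, parallels \cite[Lemma~3.2]{CapPinSpe} closely.
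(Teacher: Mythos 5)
Your argument follows the paper's proof essentially line for line: horizontality gives $h'=2(f'g-g'f)$, the decomposition $f'g-P'Q=(f'-P')g+P'(g-Q)$ (and its twin for $g'f-Q'P$) reduces everything to the four $L^{1,\omega}$ estimates supplied by Lemma~\ref{intL1}, and the only remaining issue is replacing $R$ by its truncation $\widetilde{R}$. One small simplification: you need not split $g$ as $(g-Q)+Q$ or worry about a quadratic cross term. Since $g$ and $P'$ are continuous on the compact interval $[a,b]$ they are bounded, so one simply writes $\dashint|(f'-P')g|\leq\Vert g\Vert_{\infty}\dashint|f'-P'|$ and $\dashint|P'(g-Q)|\leq\Vert P'\Vert_{\infty}\dashint|g-Q|$, which already gives $\dashint_{B(x,\rho)}|h'-R|\lesssim\omega(\rho)\rho^{m-1}$.

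The one genuinely shaky step is your treatment of the truncation error. You correctly identify that $\dashint_{B(x,\rho)}|R-\widetilde{R}|=O(\rho^{m})$ and that the question is whether $\rho^{m}\leq C\omega(\rho)\rho^{m-1}$, i.e.\ whether $\rho/\omega(\rho)$ is bounded on $(0,\rho_0)$; you then assert it ``need not be bounded'' and nonetheless claim the term is absorbed ``with a possibly large constant.'' A large constant does not help if $\rho/\omega(\rho)$ were actually unbounded near $0$. The missing justification is that $\omega$ is, by standing assumption in this paper, a \emph{concave} modulus of continuity with $\omega(0)=0$, so $t\mapsto\omega(t)/t$ is decreasing on $(0,\infty)$; hence for $0<\rho<\min(\delta,1)$ one has $\omega(\rho)/\rho\geq\omega(1)$, i.e.\ $\rho\leq\hat{C}\omega(\rho)$ with $\hat{C}$ depending only on $\omega$, and therefore $\rho^{m}\leq\hat{C}\omega(\rho)\rho^{m-1}$. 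This is exactly how the paper closes the argument. With that one line supplied, your proof is complete and coincides with the paper's.
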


\begin{proof}
Fix $C>0$ and $\delta>0$ with $B(x,\delta)\subset (a,b)$ such that for all $0<\rho<\delta$ we have
\[\dashint_{B(x,\rho)}|f-P|\leq C\omega(\rho)\rho^{m}, \qquad \dashint_{B(x,\rho)}|f'-P'|\leq C\omega(\rho)\rho^{m-1},\]
and
\[\dashint_{B(x,\rho)}|g-Q|\leq C\omega(\rho) \rho^{m}, \qquad \dashint_{B(x,\rho)}|g'-Q'|\leq C\omega(\rho) \rho^{m-1}.\]
Let $0<\rho<\min( \delta , 1)$. Since $(f,g,h)$ is a horizontal curve,
\begin{align*}
\dashint_{B(x,\rho)} |h'-R| &= \dashint_{B(x,\rho)} |2(f'g-g'f)-2(P'Q-Q'P)|\\
&\leq 2 \dashint_{B(x,\rho)}|f'g-P'Q|+2 \dashint_{B(x,\rho)}|Q'P-g'f|.
\end{align*}
We show how to estimate the first term as the bound for the second is similar. Notice \[f'g-P'Q=(f'-P')g+P'(g-Q).\] Since $g$ and $P'$ are continuous and hence bounded on $[a,b]$, we can continue our estimate as follows:
\begin{align*}
\dashint_{B(x,\rho)} |f'g-P'Q|&\leq \|g\|_{\infty} \dashint_{B(x,\rho)}|f'-P'| + \|P'\|_{\infty}\dashint_{B(x,\rho)}|g-Q| \\
&\leq C\|g\|_{\infty} \omega(\rho)\rho^{m-1} + C\|P'\|_{\infty}\omega(\rho)\rho^{m}\\
&\leq \tilde C \omega(\rho) \rho^{m-1}
\end{align*}
for a constant $\tilde C$ independent of $\rho$. The estimate of $\dashint_{B(x,\rho)}|Q'P-g'f|$ is similar. Hence $\dashint_{B(x,\rho)} |h'-R|\leq  \tilde C \omega(\rho) \rho^{m-1}$. Next, after possibly increasing $\tilde{C}$, we have
\begin{align*}
\dashint_{B(x,\rho)}|h'-\widetilde{R}| &\leq \dashint_{B(x,\rho)} |h'-R|+\dashint_{B(x,\rho)}|R-\widetilde{R}|\\
&\leq \tilde C \omega(\rho) \rho^{m-1}+\tilde C\rho^{m}.
\end{align*}
The conclusion follows because 
$t\mapsto \omega(t)/t$ is decreasing on $(0,\infty)$ \cite[Lemma 2.5]{ZimSpeWhitney},
and hence
$t\leq \hat{C}\omega(t)$ for all $t\in (0,1)$ where $\hat{C}\geq 1$ is a constant depending only on $\omega$.
\end{proof}

The following lemma shows the parameters $C$ and $\rho_0$ in Definition~\ref{L1w} can be made independent of the point $x$ by discarding a set of small measure.
This was not necessary in the proof of \cite[Theorem 4.1]{CapPinSpe} as the convergence was not controlled by a modulus of continuity. The introduction of $\omega$ makes these uniform bounds essential.

\begin{lemma}\label{uniformparameters}
Suppose $u \in L^{1}([a,b])$ is $m$-times $L^{1,\omega}$ differentiable at $x$ for almost every $x\in (a,b)$. Then for every $\varepsilon>0$, there exists a compact set $K\subset [a,b]$ with $\mathcal{L}^1 ([a,b]\setminus K)<\varepsilon$ so that $u$ is $m$-times $L^{1,\omega}$ differentiable at every $x\in K$ with uniform parameters.
In other words, there exist $C>0$ and $\rho_{0}>0$ such that, for every $x\in K$, the following is true: $B(x,\rho_{0})\subset (a,b)$ and there exist $a_{i}(x)\in \mathbb{R}$ for $0\leq i\leq m$ such that for all $0<\rho<\rho_{0}$,
\[
\dashint_{B(x,\rho)} |u(y)- \sum_{i=0}^{m}a_{i}(x)(y-x)^{i}|\, dy\leq C\omega(\rho)\rho^m.
\]
\end{lemma}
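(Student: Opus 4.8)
The plan is to show that the "bad" set—where the parameters fail to be uniform—has small measure, using a standard Egorov-type / measure-exhaustion argument. The statement says that pointwise $L^{1,\omega}$ differentiability almost everywhere can be upgraded to uniform parameters on a large compact set. The obstruction to uniformity has two sources: first, the constant $C$ and radius $\rho_0$ in Definition~\ref{L1w} a priori depend on $x$; second, the coefficient functions $a_i(x)$ must be sufficiently regular to work with. By Remark~\ref{rem44}, the coefficient functions $x \mapsto a_i(x)$ are measurable almost everywhere on $[a,b]$, so I may begin by fixing measurable versions of them.

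\medskip

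\emph{First}, for each positive integer $n$, I would define the set
$$
E_n = \left\{ x \in (a,b) : B(x, 1/n) \subset (a,b) \text{ and } \dashint_{B(x,\rho)} \Big| u(y) - \sum_{i=0}^m a_i(x)(y-x)^i \Big|\, dy \leq n\, \omega(\rho)\rho^m \text{ for all } 0 < \rho < 1/n \right\}.
$$
By hypothesis, for almost every $x$ there exist some $C$ and $\rho_0$ making the estimate hold, so almost every such $x$ lies in $E_n$ once $n \geq \max(C, 1/\rho_0)$. Hence $\bigcup_n E_n$ covers $(a,b)$ up to a null set. The key technical point is that each $E_n$ is measurable: this follows because the integrand depends measurably on $x$ (via the measurable coefficients $a_i$) and the averaged-integral condition over the countable dense set of rational $\rho \in (0,1/n)$ suffices by continuity of $\rho \mapsto \dashint_{B(x,\rho)}(\cdots)$, reducing the defining condition to a countable intersection of measurable conditions.

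\medskip

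\emph{Second}, since $E_n \subseteq E_{n+1}$ (taking $C=n$ and $\rho_0 = 1/n$ only makes the constraint easier to satisfy as $n$ grows) and $\mathcal{L}^1\big((a,b) \setminus \bigcup_n E_n\big) = 0$, continuity of measure from below gives $\mathcal{L}^1(E_n) \to b - a$. Thus, given $\varepsilon > 0$, I may fix $N$ large enough that $\mathcal{L}^1\big((a,b) \setminus E_N\big) < \varepsilon/2$. On $E_N$ the parameters $C = N$ and $\rho_0 = 1/N$ are uniform, which is exactly the desired conclusion with $E_N$ in place of $K$.

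\medskip

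\emph{Finally}, to obtain a \emph{compact} set, I would invoke inner regularity of Lebesgue measure: choose a compact $K \subseteq E_N$ with $\mathcal{L}^1(E_N \setminus K) < \varepsilon/2$, so that $\mathcal{L}^1([a,b] \setminus K) < \varepsilon$. The uniform estimate, valid at every point of $E_N$, holds in particular at every point of $K$ with the same $C = N$ and $\rho_0 = 1/N$, completing the proof. The main obstacle I anticipate is the measurability of the sets $E_n$; this requires care because the condition involves an uncountable family of radii $\rho$ and an $x$-dependent polynomial, but it is handled by the continuity of the averaged integral in $\rho$ together with the almost-everywhere measurability of the coefficient functions $a_i$ from Remark~\ref{rem44}.
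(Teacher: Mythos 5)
Your proposal is correct and follows essentially the same route as the paper: define nested measurable sets $E_n$ on which the constant and radius are uniformized to $n$ and $1/n$, reduce the ``for all $\rho$'' condition to rational $\rho$ by continuity/monotonicity, use continuity of measure from below, and finish with inner regularity. The one point you treat more briefly than the paper is the measurability of $x\mapsto \int_{B(x,\rho)}|u(y)-\sum_i a_i(x)(y-x)^i|\,dy$ (the paper proves this via Lusin's theorem and an explicit continuity estimate on sets where the $a_i$ are continuous), but your appeal to measurable dependence of the integrand is salvageable in one line by joint measurability of $(x,y)\mapsto |u(y)-\sum_i a_i(x)(y-x)^i|\mathbf{1}_{B(x,\rho)}(y)$ together with Tonelli's theorem, so there is no genuine gap.
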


\begin{proof}
By hypothesis, for almost every $x\in (a,b)$ there exist $a_{i}(x)\in \mathbb{R}$ for $0\leq i\leq m$
and constants $C_x>0$, $\rho_{x}>0$ with $B(x,\rho_{x})\subset (a,b)$ such that, for all $0<\rho<\rho_{x}$,
\[
\dashint_{B(x,\rho)} |u(y)- \sum_{i=0}^{m}a_{i}(x)(y-x)^{i}|\, dy\leq C_x \omega(\rho)\rho^m.
\]
For each $N\in \mathbb{N}$, let $A_{N}$ be the set of $x\in (a,b)$ such that $B(x,1/N)\subset (a,b)$ and, for every $0<\rho<1/N$,
\[
\dashint_{B(x,\rho)} |u(y)- \sum_{i=0}^{m}a_{i}(x)(y-x)^{i}|\, dy\leq N\omega(\rho)\rho^m.
\]

\smallskip

\noindent \textbf{Claim:} To prove the lemma it suffices to show that $A_{N}$ is measurable for each $N\in \mathbb{N}$.

\begin{proof}[Proof of Claim] Suppose $A_{N}$ is measurable for every $N\in \mathbb{N}$. By the hypothesis of the lemma, we have $[a,b]\setminus Z=\bigcup_{N=1}^{\infty}A_{N}$ for some measure zero set $Z\subset [a,b]$. Clearly $A_{N}\subset A_{N+1}$ for every $N$. Hence 
\[\mathcal{L}^1 ([a,b]) = \mathcal{L}^1\left(\bigcup_{N=1}^{\infty}A_{N} \right)=\lim_{N\to \infty}\mathcal{L}^1(A_{N}).\]
Given $\varepsilon>0$, it follows that $\mathcal{L}^1 ([a,b]\setminus A_{N})<\varepsilon$ for sufficiently large $N$. Using the definition of $A_{N}$, this proves the lemma under the assumption that $A_{N}$ is measurable for each $N\in \mathbb{N}$.
\end{proof}

\smallskip

\noindent \textbf{Claim:} The set $A_{N}$ is measurable for each $N\in \mathbb{N}$

\begin{proof}[Proof of Claim] Given $0<\rho<1/N$, let $A_{N,\rho}$ be the set of those points $x\in (a,b)$ so that $B(x,1/N)\subset (a,b)$ and
\[
\dashint_{B(x,\rho)} |u(y)- \sum_{i=0}^{m}a_{i}(x)(y-x)^{i}|\, dy\leq N\omega(\rho)\rho^m.
\]
We claim that 
\begin{equation}\label{intersect}
A_{N}=\bigcap_{\substack{\rho \in (0,1/N)\cap \mathbb{Q}}} A_{N,\rho}.
\end{equation}
Clearly $A_N$ is a subset of the right hand side of \eqref{intersect}. To see the opposite, suppose $x$ belongs to the right side. Fix any $\rho^{\ast}\in (0,1/N)$, possibly irrational. Choose a sequence $(\rho_{n}) \subset \mathbb{Q}\cap (0,1/N)$ with $\rho_{n}\downarrow \rho^{\ast}$. Then, using the fact $\rho^{\ast}\leq \rho_{n}$ for every $n$,
\begin{align*}
&\dashint_{B(x,\rho^{\ast})} |u(y)- \sum_{i=0}^{m}a_{i}(x)(y-x)^{i}|\, dy\\
&\qquad \leq \frac{\rho_{n}}{\rho^{\ast}} \dashint_{B(x,\rho_{n})} |u(y)- \sum_{i=0}^{m}a_{i}(x)(y-x)^{i}|\, dy\\
&\qquad \leq \frac{\rho_{n}}{\rho^{\ast}} N\omega(\rho_{n})\rho_{n}^{m}.
\end{align*}
Taking the limit as $n\to \infty$ and using the continuity of $\omega$ gives
\[ \dashint_{B(x,\rho^{\ast})} |u(y)- \sum_{i=0}^{m}a_{i}(x)(y-x)^{i}|\, dy \leq N\omega(\rho^{\ast})(\rho^{\ast})^{m}.\]
Since $\rho^{\ast}$ was arbitrary, this proves the equality \eqref{intersect}.

Using \eqref{intersect}, then, it suffices to show that $A_{N,\rho}$ is measurable for each fixed $N$ and $\rho \in (0,1/N) \cap \mathbb{Q}$. To do so, it is enough to show measurability of the function $\phi$ defined almost everywhere on $[a+1/N,b-1/N]$ by
\[\phi(x):= \int_{B(x,\rho)} |u(y)- \sum_{i=0}^{m}a_{i}(x)(y-x)^{i}|\, dy.\]

To this end, recall from Remark~\ref{rem44} that the coefficient functions $x\mapsto a_{i}(x)$ are almost everywhere defined and measurable on $[a,b]$ for $0\leq i\leq m$. By Lusin's theorem, then, it suffices to show that $\phi$ is measurable when restricted to any compact subset $K\subset [a+1/N,b-1/N]$ with the following property:  for all $0\leq i\leq m$, $a_{i}$ exists at every point of $K$ and defines a continuous function on $K$. 

Fix such a compact set $K$. We will show that $\phi$ is continuous on $K$. Indeed, for $x,z\in K$ the expression $|\phi(x)-\phi(z)|$ can be estimated by the sum of three terms:
\begin{enumerate}
    \item $\displaystyle{\int_{B(x,\rho)\cap B(z,\rho)} \left| \sum_{i=0}^{m}a_{i}(x)(y-x)^{i} - \sum_{i=0}^{m}a_{i}(z)(y-z)^{i} \right| \, dy}$\\
    \item $\displaystyle{\int_{B(x,\rho)\setminus B(z,\rho)} \left|u(y)- \sum_{i=0}^{m}a_{i}(x)(y-x)^{i}\right|\, dy}$\\
    \item $\displaystyle{\int_{B(z,\rho)\setminus B(x,\rho)} \left|u(y)- \sum_{i=0}^{m}a_{i}(z)(y-z)^{i}\right|\, dy}$
\end{enumerate}
We will show that each term vanishes as $z\to x$

Firstly, $(1)$ can be estimated by
\[\sum_{i=0}^{m} \int_{B(x,\rho)\cap B(z,\rho)} | a_{i}(x)(y-x)^{i} - a_{i}(z)(y-z)^{i} | \, dy. \]
Since the functions $a_{i}$ are continuous on the compact set $K$, they are bounded by some constant $C_{K}$. We estimate each term in the sum as follows,
\begin{align*}
& \int_{B(x,\rho)\cap B(z,\rho)} | a_{i}(x)(y-x)^{i} - a_{i}(z)(y-z)^{i} | \, dy\\
&\qquad \leq \int_{B(x,\rho)\cap B(z,\rho)} |a_{i}(x)-a_{i}(z)|\cdot |y-x|^{i} + |a_{i}(z)|\cdot ||y-x|^{i}-|y-z|^{i}| \, dy\\
&\qquad \leq 2\rho^{i+1} |a_{i}(x)-a_{i}(z)| + C_{K} \int_{B(x,\rho)\cap B(z,\rho)} ||y-x|^{i}-|y-z|^{i}| \, dy.
\end{align*}
Using continuity of each function $a_{i}$ on $K$, the limit of the first term is $0$ as $x\to z$. If $i=0$ then the second term is identically zero. Otherwise, note that $t\mapsto t^{i}$ is Lipschitz on the interval $|t|\leq \rho$ with Lipschitz constant $i\rho^{i-1}$ by the mean value theorem. Hence, the second term can be estimated by 
\[ \int_{B(x,\rho)\cap B(z,\rho)} i\rho^{i-1}|x-z|\, dy \leq 2i\rho^{i}|x-z|,\]
which clearly tends to $0$ as $x\to z$. This completes the estimate of (1).

Next, we estimate $(2)$ as follows, recalling that $\rho<1$,
\begin{align}
&\int_{B(x,\rho)\setminus B(z,\rho)} |u(y)- \sum_{i=0}^{m}a_{i}(x)(y-x)^{i}|\, dy \nonumber\\
&\qquad \leq \int_{B(x,\rho)\setminus B(z,\rho)}|u(y)|\, dy + \left(\sum_{i=0}^{m}|a_{i}(x)|\rho^{i} \right) \mathcal{L}^1(B(x,\rho)\setminus B(z,\rho)) \nonumber\\
&\qquad \leq \int_{B(x,\rho)\setminus B(z,\rho)}|u(y)|\, dy + C_{K} (m+1) \mathcal{L}^1(B(x,\rho)\setminus B(z,\rho)). \label{e-lastline}
\end{align}
Clearly $\mathcal{L}^1(B(x,\rho)\setminus B(z,\rho))\to 0$ as $z\to x$. Since $u\in L^{1}([a,b])$, it follows that \eqref{e-lastline} tends to $0$ as $z\to x$.

The estimate for $(3)$ is similar to that of (2) with $x$ and $z$ interchanged. This completes the proof of the claim.
\end{proof}
Combining the two claims above completes the proof of Lemma \ref{uniformparameters}.
\end{proof}

Recall Markov's inequality for polynomials \cite{Mar89, Sha04}.


\begin{lemma}[Markov Inequality]\label{Markov}
Let $P$ be a polynomial of degree $n$ and $a < b$. Then
\[ \max_{[a,b]}|P'|\leq \frac{2n^{2}}{b-a} \max_{[a,b]}|P|.\]
\end{lemma}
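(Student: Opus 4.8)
The plan is to reduce the statement on a general interval $[a,b]$ to the classical \emph{normalized} Markov inequality on $[-1,1]$ by means of an affine change of variables. The classical inequality asserts that for any polynomial $Q$ of degree $n$ one has $\max_{[-1,1]}|Q'| \leq n^2 \max_{[-1,1]}|Q|$, with equality attained by the Chebyshev polynomial $T_n$. This is precisely the content of \cite{Mar89, Sha04}, and I would take it as given; it is the only nontrivial input to the argument.

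Given $P$ of degree $n$ on $[a,b]$, I would introduce the affine map $\psi(t) = \tfrac{a+b}{2} + \tfrac{b-a}{2}\,t$, which sends $[-1,1]$ bijectively onto $[a,b]$, and set $Q = P \circ \psi$. Then $Q$ is again a polynomial of degree $n$, and the chain rule gives $Q'(t) = \tfrac{b-a}{2}\, P'(\psi(t))$. Because $\psi$ is a bijection between the two intervals, the suprema transform cleanly: $\max_{[-1,1]}|Q| = \max_{[a,b]}|P|$ and $\max_{[-1,1]}|Q'| = \tfrac{b-a}{2}\,\max_{[a,b]}|P'|$.

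Substituting these two identities into the normalized inequality $\max_{[-1,1]}|Q'| \leq n^2 \max_{[-1,1]}|Q|$ yields $\tfrac{b-a}{2}\,\max_{[a,b]}|P'| \leq n^2 \max_{[a,b]}|P|$, and dividing through by $(b-a)/2$ gives the claimed bound $\max_{[a,b]}|P'| \leq \tfrac{2n^2}{b-a}\,\max_{[a,b]}|P|$. The factor $2$ in the statement thus arises exactly from the Jacobian $\tfrac{b-a}{2}$ of the rescaling.

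I expect no real obstacle in the affine reduction itself, which is entirely routine bookkeeping. The genuine difficulty is hidden in the normalized inequality on $[-1,1]$ that I am citing: establishing it from first principles requires the extremal analysis identifying the Chebyshev polynomial as the maximizer of the endpoint derivative among degree-$n$ polynomials bounded by $1$, together with an equioscillation argument. Since that result is standard and referenced, the present lemma is best regarded as a direct corollary obtained by change of variables.
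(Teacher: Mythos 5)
Your argument is correct: the affine substitution $\psi(t)=\tfrac{a+b}{2}+\tfrac{b-a}{2}t$ cleanly transports the normalized Markov inequality on $[-1,1]$ to $[a,b]$, with the factor $\tfrac{2}{b-a}$ coming from the Jacobian exactly as you say. The paper offers no proof at all here --- it simply recalls the lemma as a classical fact with references --- so your rescaling derivation from the cited normalized inequality is precisely the intended (and standard) justification, and there is nothing to fault in it.
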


The following result will establish the first condition necessary in the application of Theorem~\ref{t-HeisWhitLip}.

\begin{proposition}\label{approxWhitney}
Suppose $u\colon [a,b]\to \mathbb{R}$ is absolutely continuous and $u'$ is $(m-1)$-times $L^{1,\omega}$ differentiable almost everywhere. For almost every $x\in (a,b)$, denote the $m$-times $L^{1,\omega}$ derivative of $u$ at $x$ by $P_{u,x}^m(y) = \sum_{i=0}^m\frac{u_i(x)}{i!}(y-x)^i$.

Then for every $\varepsilon>0$, there exists a compact set $K\subset [a,b]$ with $\mathcal{L}^1([a,b]\setminus K) \leq \varepsilon$ such that $(u_{i})_{i=0}^{m}$ is a $C^{m,\omega}$ Whitney field on $K$.
\end{proposition}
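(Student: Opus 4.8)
The plan is to apply Lemma~\ref{uniformparameters} to extract a compact set $K$ on which the $L^{1,\omega}$ differentiability holds with uniform parameters, and then to deduce the $C^{m,\omega}$ Whitney field estimates from these uniform bounds by comparing Taylor polynomials at two nearby points of $K$. First I would note that, since $u$ is absolutely continuous and $u'$ is $(m-1)$-times $L^{1,\omega}$ differentiable almost everywhere, Lemma~\ref{intL1} shows $u$ is $m$-times $L^{1,\omega}$ differentiable almost everywhere with the stated polynomial $P_{u,x}^m$. Applying Lemma~\ref{uniformparameters} to $u$ then yields, for any $\varepsilon > 0$, a compact set $K$ with $\mathcal{L}^1([a,b]\setminus K) \leq \varepsilon$ together with uniform constants $C > 0$ and $\rho_0 > 0$ such that
\[
\dashint_{B(x,\rho)} \left| u(y) - \sum_{i=0}^m \frac{u_i(x)}{i!}(y-x)^i \right| \, dy \leq C \omega(\rho)\rho^m
\]
for every $x \in K$ and every $0 < \rho < \rho_0$. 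After possibly shrinking $K$ to a smaller compact subset (still up to measure $\varepsilon$), I would also assume, via Remark~\ref{rem44} and Lusin's theorem, that each coefficient function $x \mapsto u_i(x)$ is continuous on $K$.

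The heart of the matter is to show that, for $a_0, b_0 \in K$, the remainder
\[
\left| u_k(b_0) - \sum_{\ell=0}^{m-k} \frac{u_{k+\ell}(a_0)}{\ell!}(b_0 - a_0)^\ell \right|
\lesssim \omega(|b_0 - a_0|)\,|b_0 - a_0|^{m-k}
\]
for each $0 \leq k \leq m$. The key idea is that the two polynomials $P_{u,a_0}^m$ and $P_{u,b_0}^m$ are each $L^1$-close to $u$ on overlapping balls, hence close to each other; this forces their coefficients (and those of their derivatives) to be comparable with the correct powers of $|b_0 - a_0|$. Concretely, I would set $r = |b_0 - a_0|$, take $\rho$ comparable to $r$ so that $B(a_0,\rho)$ and $B(b_0,\rho)$ overlap in a ball of radius $\gtrsim r$, and use the triangle inequality
\[
\dashint_{B} \left| P_{u,a_0}^m(y) - P_{u,b_0}^m(y) \right| dy
\leq \dashint_{B} |u - P_{u,a_0}^m| + \dashint_{B} |u - P_{u,b_0}^m|
\lesssim \omega(r)\, r^m
\]
on a suitable common ball $B$. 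This bounds the $L^1$-norm of the difference of the two polynomials. Since $Q := P_{u,a_0}^m - P_{u,b_0}^m$ has degree at most $m$, I would invoke the equivalence of norms on the finite-dimensional space of polynomials of degree $\leq m$ (or iterate the Markov Inequality, Lemma~\ref{Markov}) to upgrade this $L^1$ control into pointwise control of $Q$ and all its derivatives, picking up the appropriate scaling factors in $r$. Evaluating $Q$ and its derivatives at $a_0$ and reorganizing the coefficients — recalling that $D^k P_{u,a_0}^m(a_0) = u_k(a_0)$ — then produces exactly the claimed Whitney remainder estimates.

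I expect the main obstacle to be the careful bookkeeping of the scaling when converting the averaged $L^1$ bound on $Q$ into the pointwise Whitney estimates at the correct order $|b_0 - a_0|^{m-k}$ for each derivative level $k$. One must be sure that a single integral bound of size $\omega(r)\,r^m$ distributes correctly across the coefficients so that the $k$'th derivative remainder carries the weight $r^{m-k}$, which is precisely what the repeated Markov inequality delivers (each differentiation on an interval of length $\sim r$ costs a factor $\sim r^{-1}$). A secondary subtlety is the case $r = |b_0 - a_0|$ comparable to or larger than $\rho_0$: for such separated pairs the estimate is trivial since $\omega(r) r^{m-k}$ is bounded below away from zero and the coefficient functions are bounded on the compact set $K$, so the remainder can be absorbed into the constant. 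Handling the near-diagonal regime via the overlapping-ball comparison and the far regime by compactness, and then combining them into a single constant, completes the argument and shows $(u_i)_{i=0}^m$ is a $C^{m,\omega}$ Whitney field on $K$.
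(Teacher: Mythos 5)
Your argument is correct, and its overall architecture (uniform parameters from Lemma~\ref{uniformparameters}, comparison of the two Taylor polynomials at nearby points, Markov's inequality to distribute the bound over the derivative levels, and Lusin plus boundedness of the coefficients to absorb the regime $|b_0-a_0|\geq\rho_0$) matches the paper's. The one genuinely different ingredient is how you obtain the zeroth-order bound $\sup|P_{u,a_0}^m-P_{u,b_0}^m|\lesssim\omega(r)r^m$. The paper applies Lemma~\ref{uniformparameters} to $u'$ rather than to $u$, and then uses the absolute continuity of $u$ together with $P_x(x)=u(x)$ to write $|P_y(z)-P_x(z)|\leq\int_z^y|P_y'-u'|+\int_x^z|u'-P_x'|$, which yields a \emph{pointwise} bound on $[x,y]$ directly, so Markov can be applied immediately. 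You instead work entirely at the $L^1$ level: a triangle inequality on an overlapping ball gives $\dashint_B|Q|\lesssim\omega(r)r^m$, and you then need the equivalence of the $L^1$ and sup norms (with the correct scaling) on the finite-dimensional space of polynomials of degree at most $m$ before Markov can take over. That extra lemma is standard but is not among the tools the paper records (note that iterating Markov alone does not supply it, since Markov converts sup bounds to sup bounds); the trade-off is that your route never uses absolute continuity of $u$ beyond invoking Lemma~\ref{intL1}, so it applies verbatim to any $u$ that is $m$-times $L^{1,\omega}$ differentiable a.e.\ with measurable coefficients, in the spirit of the classical Calder\'on--Zygmund approach. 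Two trivial slips: to produce the Whitney remainder $u_k(b_0)-\sum_{\ell=0}^{m-k}\frac{u_{k+\ell}(a_0)}{\ell!}(b_0-a_0)^\ell$ you should evaluate $D^kQ$ at $b_0$, not $a_0$ (harmless, since the Whitney condition is required for both orderings of the pair anyway), and you should make sure the closed common interval actually contains both endpoints, e.g.\ by taking $\rho$ slightly larger than $r$.
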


\begin{proof} 
Let $\varepsilon>0$. Applying Lemma \ref{uniformparameters} to $u'$, there exists a compact set $K\subset [a,b]$ with $\mathcal{L}^1([a,b]\setminus K)<\varepsilon/2$ and $\rho_{0}>0, C>0$ so that the following holds. For every $x\in K$, $B(x,\rho_{0})\subset [a,b]$ and $u'$ is $m-1$ times $L^{1,\omega}$ differentiable at $x$ with
\[\dashint_{B(x,\rho)} |u'-(P_{u,x}^m)'|\leq C\omega(\rho)\rho^{m-1} \mbox{ for every }0<\rho<\rho_{0}.\]

Suppose $x,y\in K$ with $x\leq y$ and $|x-y|< \rho_{0}$. Denote $P_{y}=P_{u,y}^{m}$ and $P_{x}=P_{u,x}^{m}$. 
Then for $z\in [x,y]$ we estimate as follows, using the fact that $|z-x|< \rho_{0}$, $|z-y|< \rho_{0}$, $P_{y}(y)=u(y)$ and $P_{x}(x)=u(x)$,
\begin{align*}
|P_{y}(z)-P_{x}(z)|
&\leq |P_{y}(z)-u(z)|+|u(z)-P_{x}(z)|\\
&\leq \int_{z}^{y}|P_{y}'-u'|+\int_{x}^{z}|u'-P_{x}'|\\
&\leq 2|z-y|\dashint_{B(y,|z-y|)}|P_{y}'-u'|+2|z-x|\dashint_{B(x,|z-x|)}|u'-P_{x}'|\\
&\leq 2C|z-y|\omega(|z-y|)|z-y|^{m-1}+2C|z-x|\omega(|z-x|)|z-x|^{m-1}\\
&\leq 4C\omega(|x-y|)|x-y|^{m}.
\end{align*}
Applying the Markov inequality (Lemma \ref{Markov}) to the polynomial $q=P_{y}-P_{x}$ gives a larger constant $C$ depending only on $m$ and the previous constant such that, for $z\in [x,y]$,
\[|D^{k}(P_{y})(z)-D^{k}(P_{x})(z)|\leq C\omega(|x-y|)|x-y|^{m-k} \mbox{ for }0\leq k\leq m,\]
where the derivatives are taken with respect to $z$. Evaluating at $z=y$ gives
\begin{equation}\label{smallscales}
|u_{k}(y)-D^{k}(P_{x})(y)|\leq C \omega(|x-y|)|x-y|^{m-k} \mbox{ for }0\leq k\leq m
\end{equation}
for any $x,y\in K$ with $|x-y|< \rho_{0}$

Using the measurability of $u_{k}$ from Remark~\ref{rem44}, apply Lusin's theorem to choose a compact set $\widetilde{K}\subset K$ so that $\mathcal{L}^1([a,b]\setminus \widetilde{K})\leq \varepsilon$ and $u_{k}|_{\widetilde{K}}$ is continuous and hence bounded for every $0\leq k\leq m$. Then the left hand side of \eqref{smallscales} is uniformly bounded above for all $x,y\in \widetilde{K}$ and $0\leq k\leq m$. It follows that, for some larger constant $\widetilde{C}$ depending on $N$, $m$, $K$ and $\widetilde{K}$, we have
\[|u_{k}(y)-D^{k}(P_{x})(y)|\leq \widetilde{C} \omega(|x-y|)|x-y|^{m-k} \mbox{ for }0\leq k\leq m\]
for all $x,y\in \widetilde{K}$ (not only those satisfying $|x-y|< \rho_{0}$). This proves that $(u_{i})_{i=0}^{m}$ is a $C^{m,\omega}$ Whitney field on $\widetilde{K}$.
\end{proof}

We can now conclude our second main theorem. Most of the new work has been done in the above results, and, with these in hand, the proof of this theorem follows nearly the same proof of \cite[Theorem 4.1]{CapPinSpe}. 

\begin{theorem}
\label{t-Lusin}
Let $I\subset\mathbb{R}$ be closed bounded interval, $\omega$ a modulus of continuity, and $\Gamma = (f,g,h):I\to\mathbb{H}$ be a horizontal curve such that $f'$ and $g'$ are $m-1$ times $L^{1,\omega}$ differentiable at almost every point of $I$. Then $\Gamma$ has the $(m,\omega)$-Lusin property. 
\end{theorem}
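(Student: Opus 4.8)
The plan is to prove Theorem~\ref{t-Lusin} by combining the local regularity results just established with the $C^{m,\omega}$ Whitney extension theorem for horizontal curves (Theorem~\ref{t-HeisWhitLip}), following the structure of \cite[Theorem~4.1]{CapPinSpe} but now tracking the modulus of continuity. Fix $\varepsilon>0$. The goal is to produce a compact set $K\subset I$ with $\mathcal{L}^1(I\setminus K)<\varepsilon$ on which the restriction of $\Gamma$ extends to a genuine $C^{m,\omega}$ horizontal curve agreeing with $\Gamma$ on $K$, and then to patch the extension on $I\setminus K$ without destroying horizontality. Since $f'$ and $g'$ are $(m-1)$-times $L^{1,\omega}$ differentiable a.e., Lemma~\ref{intL1} shows $f$ and $g$ are $m$-times $L^{1,\omega}$ differentiable a.e.\ with derivatives $P_{f,x}^m,P_{g,x}^m$, and Lemma~\ref{vertL1} shows $h'$ is $(m-1)$-times $L^{1,\omega}$ differentiable a.e.; hence by Lemma~\ref{intL1} again $h$ is $m$-times $L^{1,\omega}$ differentiable a.e.\ as well. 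Thus all three coordinates come equipped with coefficient functions $(f_i),(g_i),(h_i)$ defined a.e.

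First I would apply Proposition~\ref{approxWhitney} to each of $f$, $g$, and $h$ (using $\varepsilon/3$ each, say), and intersect the resulting compact sets to obtain a single compact $K\subset I$ with $\mathcal{L}^1(I\setminus K)<\varepsilon$ on which all three coefficient collections $(f_i)_{i=0}^m$, $(g_i)_{i=0}^m$, $(h_i)_{i=0}^m$ are simultaneously $C^{m,\omega}$ Whitney fields. This secures condition \textbf{(1)} of Theorem~\ref{t-HeisWhitLip}. For condition \textbf{(2)}, the Leibniz relation among the coefficients must hold pointwise on $K$; this is where the specific polynomial $\widetilde R$ from Lemma~\ref{vertL1} matters, since it is defined precisely so that the $L^{1,\omega}$ derivative of $h'$ is the truncation of $2(P_f'P_g-P_g'P_f)$, which encodes exactly the required algebraic identity between the $h_i$ and the $f_i,g_i$. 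I would verify that the coefficients $h_k$ thereby agree with $2\sum_{i=0}^{k-1}\binom{k-1}{i}(f_{k-i}g_i-g_{k-i}f_i)$ at every point of $K$.

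The remaining and most delicate step is condition \textbf{(3)}, the area--velocity estimate $|A^m(\gamma;a,b)|\le \hat C\,V^m_\omega(\gamma;a,b)$ on $K$. Here I expect the main obstacle to lie, since unlike the $C^m$ setting of \cite{CapPinSpe} the bound must now be uniform with the correct $\omega$-weight rather than merely $o$-type. I would estimate $A^m$ directly using the $L^{1,\omega}$ approximation of $f,g$ by their Taylor polynomials built from the coefficient functions, exploiting that on $K$ the quantity $|f(b)-T_a^m f(b)|$ and its analogues for $g$ are controlled by $\omega(|b-a|)|b-a|^m$ via \eqref{smallscales} and the definition of the Whitney field; the horizontality of $\Gamma$ (which gives the exact relation $h'=2(f'g-fg')$ a.e.) lets me rewrite the height increment $h(b)-h(a)$ as the true signed area, so that $A^m$ measures only the discrepancy between the true area and its polynomial approximation. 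Bounding each resulting term by $V^m_\omega$ should follow the same pattern as the estimates \eqref{e-1st}--\eqref{e-3rd} in the proof of Lemma~\ref{l-AVLip2}, using that $\omega$ is increasing and subadditive.

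With conditions \textbf{(1)}--\textbf{(3)} verified on $K$, Theorem~\ref{t-HeisWhitLip} yields a horizontal curve $\widetilde\Gamma\in C^{m,\omega}(\mathbb R,\mathbb R^3)$ with $D^k\widetilde\Gamma|_K=(f_k,g_k,h_k)$ for $0\le k\le m$; in particular $\widetilde\Gamma|_K=\Gamma|_K$, so $\widetilde\Gamma$ agrees with $\Gamma$ except on $I\setminus K$, a set of measure less than $\varepsilon$. Restricting $\widetilde\Gamma$ to $I$ gives the required $C^{m,\omega}$ horizontal approximant, establishing the $(m,\omega)$-Lusin property. I would close by noting that the extension furnished by Theorem~\ref{t-HeisWhitLip} is automatically horizontal on all of $\mathbb R$, so no separate argument (such as Lemma~\ref{l-horizLip}) is needed to repair horizontality off $K$, which is the conceptual simplification afforded by invoking the horizontal Whitney theorem directly rather than extending coordinates independently.
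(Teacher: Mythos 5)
Your proposal follows essentially the same route as the paper's proof: establish $m$-times $L^{1,\omega}$ differentiability of all three coordinates via Lemmas~\ref{intL1} and \ref{vertL1}, extract a compact set on which Proposition~\ref{approxWhitney} yields the $C^{m,\omega}$ Whitney-field condition, verify the Leibniz and area--velocity conditions of Theorem~\ref{t-HeisWhitLip}, and invoke that theorem to produce the horizontal $C^{m,\omega}$ approximant. The one point to make explicit is that verifying condition (3) requires the integral bounds $\dashint_a^b |f'-(T_a^m F)'| \leq C\omega(|b-a|)(b-a)^{m-1}$ over entire intervals $[a,b]$ (not merely pointwise control at points of $K$ from the Whitney-field estimates), with $C$ uniform in $a,b\in K$; the paper secures this by additionally shrinking $K$ via Lemma~\ref{uniformparameters} applied to $f'$ and $g'$, a uniformity step your sketch relies on implicitly but should state.
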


\begin{proof}
By Lemma \ref{vertL1} we know $h'$ is $(m-1)$ times $L^{1,\omega}$ differentiable almost everywhere. Using Lemma \ref{intL1} it follows that $f,g,h$ are $m$ times $L^{1,\omega}$ differentiable almost everywhere. At almost every $x\in I$ denote the $L^{1,\omega}$ derivative of $f$ by
\[P_{f,x}^m(y) = \sum_{k=0}^m\frac{f_k(x)}{k!}(y-x)^k\]
where the $f_k$ are measurable functions by Remark~\ref{rem44}. Similarly define the $L^{1,\omega}$ derivatives $P_{g,x}^m$ and $P_{h,x}^m$ with coefficients $g_k(x)$ and $h_k(x)$ at almost every point $x$. These coefficients are again measurable functions of $x$.

Fix $\eta>0$. Choose a compact set $K\subset I$ satisfying $\mathcal{L}^{1}(I\setminus K)<\eta$ such that:
\begin{enumerate}
\item the jets $F, G, H$ defined on $K$ by
\[F^k = f_k|_{K},\ G^k = g_k|_{K} \ \text{ and }\ H^k = h_k|_{K}  \ \text{ for } 0\leq k \leq m\]
are Whitney fields of class $C^{m,\omega}$ on $K$.
\item There exist $C,\rho_0>0$ such that for every $0<\delta<\rho_0$ if $a,b\in K$ with $|b-a|<\delta$ then 
\begin{equation}\label{uniformL1}
\dashint_a^b |f'-(T_{a}^{m}F)'| \leq C\omega(|b-a|)(b-a)^{m-1} \quad \mbox{and} \quad  \dashint_a^b |g'-(T_{a}^{m}G)'| \leq C\omega(|b-a|)(b-a)^{m-1} .
\end{equation}
\end{enumerate}
The first property can be obtained via Proposition~\ref{approxWhitney} whereas the second property follows from the almost everywhere $(m-1)$ times $L^{1,\omega}$ differentiability of $f', g'$, the fact that $P_{f,a}^m=T_{a}^{m}F, P_{g,a}^m=T_{a}^{m}G$ and Lemma \ref{uniformparameters} with $u=f', g'$. We now show that the hypotheses of Theorem \ref{t-HeisWhitLip} hold for the jets $F, G, H$ on the compact set $K$.
\smallskip

\emph{Verification of Theorem \ref{t-HeisWhitLip}(1).} This follows directly from the definition of $K$.

\smallskip

\emph{Verification of Theorem \ref{t-HeisWhitLip}(2).} This follows exactly as in \cite[Theorem 4.1]{CapPinSpe}.

\smallskip


\emph{Verification of Theorem \ref{t-HeisWhitLip}(3).} This follows exactly as in \cite[Theorem 4.1]{CapPinSpe} replacing $\varepsilon$ with $C\omega(|a-b|)$.

\smallskip

We have shown that the jets $F, G, H$ satisfy the hypotheses of Theorem~\ref{t-HeisWhitLip} on the compact set $K$. Hence $\Gamma=(F,G,H)$ extends to a $C^{m,\omega}$ horizontal curve $\widetilde{\Gamma}~=~(\widetilde{f}, \widetilde{g}, \widetilde{h})\colon I\to \mathbb{H}$ satisfying
\[\widetilde{f}^{k}|_{K}=F^{k},\quad \widetilde{g}^{k}|_{K}=G^{k},\quad \widetilde{h}^{k}|_{K}=H^{k} \quad \mbox{for }0\leq k\leq m.\]
From the definition of the compact set $K$ and the jets $F, G, H$ we have
\begin{align*}
&\mathcal{L}^{1} \left(\bigcup_{k=0}^{m}\{x\in I: \widetilde{f}^{k}(x)\neq f_{k}(x) \, \mbox{ or } \, \widetilde{g}^{k}(x)\neq g_{k}(x) \, \mbox{ or } \,  \widetilde{h}^{k}(x)\neq h_{k}(x)\}\right)\\
&\quad \leq \mathcal{L}^{1}(I\setminus K)\\
&\quad<\eta.
\end{align*}
This completes the proof of the theorem.
\end{proof}

\section{$C^\infty$ Whitney Extension for Horizontal Curves}
\label{SectionCinfWhitney}

In this section we prove our third result (Theorem \ref{t-Cinfty}), a $C^\infty$ Whitney extension theorem for horizontal curves in the Heisenberg group. Fix the modulus of continuity $\omega(t) = t$. With this particular modulus, the definition of the $\omega$-velocity reads as follows:
$$
V^m_x (\g;a,b) = (b-a)^{2m+2} + (b-a)^{m+1} \int_a^b \left( |(T_aF)'|+ |(T_aG)'| \right)
$$
We will prove the following.

\begin{theorem}
\label{t-Cinfty}
Suppose $K \subseteq \R$ is compact and 
$F=(F^k)_{k=0}^\infty$, $G=(G^k)_{k=0}^\infty$, and $H=(H^k)_{k=0}^\infty$ are collections of continuous, real-valued functions on $K$.
There is a horizontal curve $\Gamma \in C^{\infty}(\R,\R^3)$  
satisfying $D^k \Gamma|_K = (F^k,G^k,H^k)$
for all $k \in \mathbb{N}_0$
if and only if
there are constants $\hat{C}_m > 0$ satisfying the following for every $m \in \mathbb{N}_0$:
\begin{enumerate}
    \item 
    $(F^k)_{k=0}^m$, $(G^k)_{k=0}^m$, and $(H^k)_{k=0}^m$ are Whitney fields of class $C^{m}$ on $K$,
    \item the following holds on $K$:
    $$
        H^m = 2 \sum_{i=0}^{m-1}  \binom{m-1}{i}  \left(F^{m-i}G^i- G^{m-i}F^i \right),
    $$
    \item 
    and, writing $\gamma = (F_0,G_0,H_0)$,
    $$
\left| A^m(\g;a,b) \right| \leq \hat{C}_m V^m_x (\g;a,b)
\quad \text{for all } a,b \in K \text{ with } a<b.
$$
\end{enumerate}
\end{theorem}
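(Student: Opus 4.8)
My plan is to treat the two directions separately: reduce necessity to the finite-order theory, and in the sufficiency direction build the $C^\infty$ extension by extending the first two coordinates classically and then correcting the third coordinate one gap at a time, with corrections controlled simultaneously at \emph{every} order by hypothesis (3).

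\emph{Necessity.} Suppose $\Gamma=(f,g,h)\in C^\infty(\R,\R^3)$ is horizontal with $D^k\Gamma|_K=(F^k,G^k,H^k)$ for all $k$, and fix $m$. Condition (1) is immediate from Taylor's theorem applied to the $C^m$ functions $f,g,h$, and condition (2) is the Leibniz rule \eqref{e-LeibnizRule}, which holds on all of $\R$ by horizontality. For condition (3) I would invoke the paper's ``key idea'': since $\Gamma\in C^{m+1}$, its $m$'th derivatives are Lipschitz on the compact set $K$, so $\Gamma|_K\in C^{m,\omega}(K,\R^3)$ with $\omega(t)=t$, and the necessity half of Theorem~\ref{t-HeisWhitLip} supplies a constant $\hat C_m$ for which the area bound holds.

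\emph{Sufficiency: reduction.} Assume (1)--(3) hold for every $m$. By (1) for all $m$ the collections $(F^k)$ and $(G^k)$ are Whitney fields of class $C^\infty$, so Theorem~\ref{classicalWhitneyInf} gives $\tilde f,\tilde g\in C^\infty(\R)$ with $D^k\tilde f|_K=F^k$ and $D^k\tilde g|_K=G^k$ for all $k$; let $\tilde h\in C^\infty(\R)$ be a classical extension of $(H^k)$ as well. The first point is that condition (1) \emph{at order $m+1$} already forces the order-$m$ jets to be Whitney fields of class $C^{m,\omega}$ with $\omega(t)=t$: writing $T_a^{m+1-k}F^k=T_a^{m-k}F^k+\tfrac{F^{m+1}(a)}{(m+1-k)!}(\cdot-a)^{m+1-k}$ and using that $F^{m+1}$ is bounded on $K$ turns the $o(|a-b|^{m+1-k})$ remainder of the $C^{m+1}$ field into an $O(|a-b|^{m-k}\,\omega(|a-b|))$ bound. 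Combined with (2) and (3) at order $m$ (which are exactly conditions (2), (3) of Theorem~\ref{t-HeisWhitLip} with $\omega(t)=t$), this shows the hypotheses of the $C^{m,\omega}$ extension theorem are met at every order, so the data is realizable at each finite order; the remaining task is to produce a single curve realizing it at every order at once.

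\emph{Sufficiency: construction.} Rather than gluing the finite-order extensions, I would fix $\tilde f,\tilde g$ and take $h$ to be the horizontal lift $h(t)=H^0(t_0)+2\int_{t_0}^t(\tilde f'\tilde g-\tilde f\tilde g')$ for a basepoint $t_0\in K$, so that $\g=(\tilde f,\tilde g,h)$ is a genuinely horizontal $C^\infty$ curve. By condition (2) at every order together with \eqref{e-LeibnizRule}, one checks $D^kh|_K=H^k$ for all $k\geq 1$ automatically, so only the values $h|_K=H^0$ remain to be arranged. The obstruction is localized: across each bounded component $(a,b)$ of $\R\setminus K$ the difference $(h-\tilde h)(b)-(h-\tilde h)(a)$ equals $-\int_a^b(\tilde h'-2(\tilde f'\tilde g-\tilde f\tilde g'))$, which by the Taylor estimates \eqref{e-taylor1}--\eqref{e-taylor2} agrees with $A^m(\g;a,b)$ up to an error that is $\lesssim_m (b-a)^{m+1}$, and hence by (3) is $\lesssim_m V^m_x(\g;a,b)\lesssim (b-a)^{m+1}$ for every $m$. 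Thus the missing area in a gap of length $\ell$ decays faster than any power of $\ell$. I would then correct each gap independently by inserting into $(\tilde f,\tilde g)$ a smooth perturbation (a small loop) supported in $(a,b)$, vanishing to infinite order at $a,b$, capturing exactly this area; since the area to be captured is $o(\ell^N)$ for every $N$, these perturbations and all their derivatives can be taken to decay faster than any power of $\ell$. The two unbounded components of $\R\setminus K$ are handled by extending through the endpoint jet.

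\emph{Main obstacle.} The hard part is not the existence of the per-gap correction but verifying that the globally corrected curve is genuinely $C^\infty$, especially at accumulation points of $K$, where infinitely many shrinking gaps pile up. This requires uniform control of \emph{all} derivatives of the correction as $\ell\to 0$, and this is exactly what hypothesis (3) for every $m$ delivers: the inserted area is $\lesssim\hat C_m V^m_x\lesssim \ell^{m+1}$ for all $m$, so one can arrange $\|\text{loop}\|_{C^k}\to 0$ faster than every power of $\ell$, forcing the corrected coordinates to have all derivatives vanishing at limit points of $K$ and hence to be $C^\infty$ there with the prescribed jets. Assembling these uniform estimates is where the bulk of the technical work lies.
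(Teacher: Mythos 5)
Your architecture coincides with the paper's: necessity via Theorem~\ref{t-HeisWhitLip} with $\omega(t)=t$ (using that $C^{m+1}$ forces Lipschitz $m$-th derivatives on $K$), classical $C^\infty$ Whitney extension of the first two coordinates, a horizontal lift of the third, and per-gap area corrections by smooth perturbations of $(f,g)$ vanishing to infinite order at the gap endpoints. The reduction and the localization of the obstruction to the gaps are both correct. The genuine gap is at the step you yourself flag as ``the bulk of the technical work'': you deduce from the rapid decay of the per-gap area $\mathcal{A}$ that the corrections ``and all their derivatives can be taken to decay faster than any power of $\ell$.'' For a \emph{fixed} gap this uniform-in-$k$ control is impossible. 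In the regime where $\int_{a_i}^{b_i}|(T_aF)'|$ and $\int_{a_i}^{b_i}|(T_aG)'|$ are both negligible, the area must be produced by the self-interaction term $\int\psi\phi'$, forcing amplitudes of order $|\mathcal{A}|^{1/2}$; since $\phi$ rises to that amplitude and returns to zero inside an interval of length $\ell$, an iterated mean value theorem gives $\|D^k\phi\|_\infty\gtrsim |\mathcal{A}|^{1/2}\ell^{-k}$, which blows up in $k$. Hypothesis (3) only supplies the family of bounds $|\mathcal{A}|\leq \hat C_m V^m_x$ with constants $\hat C_m$ (and the attendant Taylor and Markov constants) growing in $m$, so for a given $\ell$ the infimum of these bounds over $m$ is a fixed positive number: one can control only the first $m(\ell)$ derivatives of the correction, with $m(\ell)\to\infty$ as $\ell\to 0$.

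This $m$-stratified control is exactly the content of the paper's Proposition~\ref{perturb}, which is the key new ingredient relative to the finite-order theorem: there is a decreasing sequence of thresholds $c_m\downarrow 0$ such that on any gap of length at most $c_m$ the perturbations can be chosen with $|D^k\phi|,|D^k\psi|\leq\sqrt{b_i-a_i}$ for $0\leq k\leq m$ \emph{only} (plus infinite-order vanishing at the endpoints), and producing the $c_m$ requires the case analysis of Lemmas~\ref{fbig}--\ref{fgsmall} together with explicit tracking of how $\hat C_m$, $\overline C_m$, and $C_m$ grow. The weaker conclusion still suffices: in the inductive verification that $D^k\Gamma$ exists on $K$ one fixes $k$ first, and any sequence of gaps accumulating at a point of $K$ eventually has length below $c_k$, so the relevant $k$ derivatives of the corrections tend to zero. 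So your plan identifies the right obstacle, but the inference from ``area is $o(\ell^N)$ for all $N$'' to ``all $C^k$ norms of the correction are small'' skips the $\ell^{-k}$ loss per derivative and the $m$-dependence of every constant, and supplying that bookkeeping is the actual proof.
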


\begin{proof}
Necessity follows immediately from Theorem~\ref{t-HeisWhitLip}
since all derivatives of a $C^\infty$ curve are Lipschitz continuous on the compact set $K$.
We will prove sufficiency.
Fix a compact set $K \subset \mathbb{R}$.
Suppose $F=(F^k)_{k=0}^\infty$, $G=(G^k)_{k=0}^\infty$, and $H=(H^k)_{k=0}^\infty$ are collections of continuous, real-valued functions on $K$
such that conditions {\em (1), (2),} and {\em (3)} hold
for all $m \in \mathbb{N}$.
According to Whitney's Extension Theorem for $C^\infty$ functions (Theorem~\ref{classicalWhitneyInf}),
condition {\em (1)} allows us to choose functions $f,g\colon \mathbb{R}\to \mathbb{R}$ of class $C^\infty$ such that
\[
D^mf(x)=F^{m}(x) \mbox{ and }D^mg(x)=G^{m}(x) \mbox{ for every }x\in K \mbox{ and } m \in \mathbb{N}_0.
\]
Fix an open interval $I$ containing $K$ and write $I \setminus K = \bigcup_{i = 1}^\infty (a_i,b_i)$.
For each $m \in \mathbb{N}_0$, use $\overline{C}_m$ to represent the constant in \eqref{e-taylor1} and \eqref{e-taylor2},
and set $\kappa_m = \max \{ \overline{C}_m, \hat{C}_m\}$.

The key difference between the proof of this result and the proof of Theorem~\ref{t-HeisWhitLip} (which may be found in \cite{ZimSpeWhitney}) is the following proposition.
This replaces Proposition~4.2 in \cite{ZimSpeWhitney}.
\begin{proposition}\label{perturb}
There is a strictly decreasing sequence $(c_m)_{m \geq 0}$ with $c_0 \leq 1$ and $c_m \to 0$
satisfying the following.
If $(b_i - a_i) \leq c_m$, then there exist $C^{\infty}$ functions $\phi, \psi : [a_i,b_i]\to \mathbb{R}$ such that
\begin{enumerate}
\item $D^k\phi(a_i)=D^k\phi(b_i)=D^k\psi(a_i)=D^k\psi(b_i)=0$ for all $k \in \mathbb{N}_0$.
\item $\max\{|D^k\phi|, |D^k\psi|\} \leq \sqrt{b_i-a_i}$ for $0\leq k\leq m$ on $[a_i,b_i]$.
\item $H(b_i)-H(a_i)=2\int_{a_i}^{b_i}(f+\phi)'(g+\psi)-(g+\psi)'(f+\phi)$.
\end{enumerate}
\end{proposition}
Note in particular that the bound in {\em (2)} is independent of $m$, while in \cite{ZimSpeWhitney} it was not. 
Note also that we allow for larger and larger regularity of the perturbations as the size of our interval shrinks whereas $m$ was fixed in \cite{ZimSpeWhitney}.
The proof of this proposition is very similar to that of Proposition~4.2 in \cite{ZimSpeWhitney}, so we will leave out many of the details and point out the main differences below.
\begin{proof}[Proof of Proposition \ref{perturb}]
Fix $m \in \mathbb{N}$, and fix $(a,b) = (a_i,b_i)$ for some $i \in \mathbb{N}$.
Write $V^m = V_x^m((f,g,H^0);a,b)$ and
\[
\mathcal{A}:=H^0(b)-H^0(a)-2\int_{a}^{b}(f'g-g'f).
\]
As in \cite{ZimSpeWhitney}, we may conclude 
from condition {\em (3)} of our hypotheses
that
$
|\mathcal{A}| 
\leq \kappa_m V^m
$.
Also, after rewriting $\mathcal{A}$ using integration by parts as in \cite{ZimSpeWhitney},
our goal of constructing $\phi$ and $\psi$ which satisfy Proposition~\ref{perturb}{\em (3)} is equivalent to solving
\begin{equation}
\label{goalint}
4\int_{a}^{b}(\psi f' - \phi g' + \psi \phi')=\mathcal{A}
\end{equation}
subject to conditions {\em (1)} and {\em (2)}.
This will be done as in \cite{ZimSpeWhitney}.
Let us first restate Lemma~4.4 from \cite{ZimSpeWhitney} when $\omega(t) = t$.
\begin{lemma}\label{mollifier}
Suppose $J \subseteq [a,b]$ is a closed interval of length at least $(b-a)/18m^2$.
There is a constant $C_m > 0$ depending only on $m$ and $\mathrm{diam}(K)$ 
and a non-negative $C^{\infty}$ function $\eta:\mathbb{R} \to \mathbb{R}$
such that
\begin{enumerate}
\item $\eta$ vanishes outside of $J$,
\item $\eta \geq 48m^2  (b-a)^{m+1}$ on the middle third of $J$,
\item $\eta' \geq 81 m^2 (b-a)^{m}$ on a subinterval of $J$ with length $\ell(J)/6$,
\item $\eta \leq C_m (b-a)^{m+1}$ 
\item $|D^i\eta| \leq C_m (b-a)$ on $[a,b]$ for $0\leq i\leq m$,
\end{enumerate}
\end{lemma}

As a result, we have the following analogues of Lemmas~4.5 and 4.6 from \cite{ZimSpeWhitney}
(the proofs of which are identical to those of the referenced lemmas).
\begin{lemma}\label{fbig}
Fix $m \in \mathbb{N}_0$.
Suppose
\begin{equation}
\label{fbigassumption}
\int_{a}^{b} |(Tf)'| \geq \max \left( \int_{a}^{b} |(Tg)'|,\ \kappa_m C_m (b-a)^{m+1} \right).
\end{equation}
Then there exists a $C^{\infty}$ map $\psi$ on $[a,b]$ 
satisfying
\begin{enumerate}
\item $D^i\psi(a)=D^i\psi(b)=0$ for $0\leq i\leq m$,
\item $|D^i\psi(x)|\leq \kappa_mC_m    (b-a)$ on $[a,b]$ for $0\leq i\leq m$,
\item $4 \int_{a}^{b} \psi f' =\mathcal{A}$.
\end{enumerate}
\end{lemma}

\begin{lemma}\label{gbig}
Fix $m \in \mathbb{N}_0$.
Suppose
\begin{equation}
\label{gbigassumption}
\int_{a}^{b} |(Tg)'| \geq \max \left( \int_{a}^{b} |(Tf)'|,\ \kappa_m C_m (b-a)^{m+1} \right).
\end{equation}
Then there exists a $C^{\infty}$ map $\phi$ on $[a,b]$ 
satisfying
\begin{enumerate}
\item $D^i\phi(a)=D^i\phi(b)=0$ for $0\leq i\leq m$,
\item $|D^i\phi(x)|\leq \kappa_mC_m    (b-a)$ on $[a,b]$ for $0\leq i\leq m$,
\item $-4 \int_{a}^{b} \phi g' =\mathcal{A}$.
\end{enumerate}
\end{lemma}

\begin{lemma}\label{fgsmall}
Fix $m \in \mathbb{N}_0$.
Suppose
\[\kappa_m C_m (b-a)^{m+1} > \max \left( \int_{a}^{b} |(Tf)'|,\ \int_{a}^{b} |(Tg)'| \right).\]
Then there exist $C^{\infty}$ maps $\phi$ and $\psi$ 
and a constant $C_m' \geq 1$ depending only on $\kappa_m$, $m$ and $\text{diam}(K)$
such that
\begin{enumerate}
\item $D^i\psi(a)=D^i\psi(b)=D^i\phi(a)=D^i\phi(b)=0$ for $0\leq i\leq m$,
\item $\max \{|D^i\psi(x)|, |D^i\phi(x)| \} \leq 6C_m' (b-a)$ on $[a,b]$ for $0\leq i\leq m$,
\item $4 \int_{a}^{b} (\psi f' - \phi g' + \psi \phi') =\mathcal{A}$.
\end{enumerate}
\end{lemma}

We have now reached the step at which our proof of Proposition~\ref{perturb} deviates from that of Proposition~4.2 in \cite{ZimSpeWhitney}.
Set $\widetilde{C}_{-1} = 1$ and $\widetilde{C}_m=\max\{ \kappa_m C_m, 6C_m', \widetilde{C}_{m-1} + 1 \}$ for $m \in \mathbb{N}_0$.
Set $c_m = (1/\widetilde{C}_m)^2$.
Note that $c_m > c_{m+1}$ and $c_m \to 0$ since $\widetilde{C}_m \to \infty$.
Fix $m \in \mathbb{N}_0$ and an interval $(a_i,b_i)$.
Suppose that $(b_i - a_i) \leq c_m$ so that
$\sqrt{b_i - a_i} \leq 1/\widetilde{C}_m$.

If \eqref{fbigassumption} holds on $[a_i,b_i]$, 
then we may choose $\phi \equiv 0$ on $[a_i,b_i]$ and $\psi$ as in Lemma \ref{fbig}
so that
$$
|D^k\psi|\leq \kappa_{m} C_{m} (b_i-a_i) \leq \widetilde{C}_{m} \sqrt{b_i-a_i} \sqrt{b_i-a_i}
\leq \sqrt{b_i-a_i}
$$ 
on $[a_i,b_i]$ for $0\leq k \leq m$.
If \eqref{gbigassumption} holds, then we may use Lemma~\ref{gbig} analogously to obtain the bound on $|D^k \phi|$ and set $\psi \equiv 0$.
If instead \eqref{fbigassumption} and \eqref{gbigassumption} both fail, we may choose $\phi$ and $\psi$ as in Lemma~\ref{fgsmall} to get
$$
\max \{|D^k\psi(x)|, |D^k\phi(x)| \} \leq 6C_{m}' (b_i-a_i) \leq \sqrt{b_i-a_i}
$$ on $[a_i,b_i]$ for $0\leq k\leq m$.
\end{proof}

With this proposition in hand, we are ready to complete
the proof of Theorem~\ref{t-Cinfty}.
Define the curve $\Gamma = (\mathcal{F},\mathcal{G},\mathcal{H}) \colon I\to \mathbb{H}$ as follows:
\[ \Gamma(x) := (F(x),G(x),H(x)) \quad \mbox{if }x \in K\]
and
\[\Gamma(x) := (\mathcal{F}_i(x),\mathcal{G}_i(x),\mathcal{H}_i(x)) \quad \mbox{if }x \in (a_i,b_i) \mbox{ for some }i \geq 1\]
where $\mathcal{F}_i$, $\mathcal{G}_i$, and $\mathcal{H}_i$
are defined as in
Lemma~4.7 from \cite{ZimSpeWhitney}
with ``$2 \widetilde{C} \omega(b_i-a_i)$'' replaced by ``$\sqrt{b_i-a_i}$''.

\begin{proposition}\label{conclusion}
$\Gamma$ is a $C^{\infty}$ horizontal curve in $\mathbb{H}$ with
\[D^k \mathcal{F}(x)=F^{k}(x),\qquad D^k \mathcal{G}(x)=G^{k}(x), \qquad D^k \mathcal{H}(x)=H^{k}(x)\]
 for all $x\in K$ and $k \in \mathbb{N}_0$.
\end{proposition}

\begin{proof}
Clearly the curve $\Gamma$ is $C^{\infty}$ in the subintervals $(a_{i}, b_{i})$. Define maps $\gamma^k$ on $K$ for $k \in \mathbb{N}_0$ by $\gamma^k=(F^k,G^k,H^k)$. 
With this notation, it remains to show that $\Gamma$ is a $C^{\infty}$ horizontal curve and $D^k \Gamma|_{K}=\gamma^{k}$ for $k \in \mathbb{N}_0$.

Fix $k \in \mathbb{N}$ and suppose we have shown that $D^{k-1} \Gamma$ exists on $I$ and $D^{k-1} \Gamma|_K = \gamma^{k-1}$.
Fix $x \in K$ with $x \neq a_i$ for any $i \in \mathbb{N}$ and $x \neq \max  K $. 
As in (6.17) from \cite{ZimSpePinWhitney}, it suffices to prove that 
\begin{equation}
\label{diffquot}
(x_i - x)^{-1} | D^{k-1} \Gamma(x_i) - D^{k-1} \Gamma(x) - (x_i-x)\gamma^k(x) |
\end{equation}
vanishes in the limit for any decreasing sequence $(x_i) \subset I \setminus K$ with $x_i \to x$.
Choose such a sequence
and note that, for each $i \in \mathbb{N}$, we have $x_i \in (a_{j_i},b_{j_i})$ for some $j_i\in \mathbb{N}$.
Since $x$ is not the left endpoint of any of these intervals, it must be true that $(b_{j_i} - a_{j_i}) \to 0$ as $i \to \infty$.
After passing to a subsequence, we may assume that $(b_{j_i} - a_{j_i}) \leq c_k$ for all $i$.

We may then bound
\eqref{diffquot} by
\begin{align}
(x_i - x)^{-1} 
| &D^{k-1} \Gamma(x_i) - D^{k-1} \Gamma(a_{j_i}) - (x_i-a_{j_i})\gamma^k(a_{j_i}) | \label{Bound1} \\
&+
(x_i - x)^{-1} 
| (x_i - a_{j_i}) \gamma^k(a_{j_i}) - (x_i-a_{j_i})\gamma^k(x) | \label{Bound2} \\
&+
(x_i - x)^{-1} 
| \gamma^{k-1}(a_{j_i}) - \gamma^{k-1}(x) - (a_{j_i} - x)\gamma^k(x) | \label{Bound3} 
\end{align}
and argue exactly as in \cite{ZimSpeWhitney}
to conclude that 
the right-hand derivative of $D^{k-1} \Gamma$ at $x$ is $\gamma^k(x)$.
We argue similarly using increasing sequences to finally prove that 
$D^k \Gamma$ exists on $I$, and $D^k \Gamma|_K = \gamma^k$.
\end{proof}
This completes the proof of the theorem.
\end{proof}

\section{A $C^\infty$ Lusin Approximation Theorem}
\label{SectionCinfLusin}

In this section we prove our final result (Theorem \ref{tt-Lusin}), a $C^\infty$ Lusin approximation theorem for horizontal curves. 

\begin{theorem}
\label{tt-Lusin}
Let $I\subset\mathbb{R}$ be a closed bounded interval and $\Gamma = (f,g,h):I\to\mathbb{H}$ be a horizontal curve such that for every $m\in\mathbb{N}$, $f'$ and $g'$ are $m$ times $L^{1}$ differentiable at almost every point of $I$. Then $\Gamma$ has the $\infty$-Lusin property. 
\end{theorem}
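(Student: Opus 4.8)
**

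The plan is to combine the $C^\infty$ Whitney extension theorem for horizontal curves (Theorem~\ref{t-Cinfty}) with the Lusin approximation machinery developed in the proof of Theorem~\ref{t-Lusin}. The strategy is to show that, after discarding a set of small measure, the $L^1$ differentiability hypotheses at every order produce jets $(F^k)_{k=0}^\infty$, $(G^k)_{k=0}^\infty$, $(H^k)_{k=0}^\infty$ on a compact set $K$ satisfying the three conditions of Theorem~\ref{t-Cinfty} \emph{for every} $m$, and then invoke that theorem to obtain a $C^\infty$ horizontal extension agreeing with $\Gamma$ on $K$.

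First I would fix $\varepsilon > 0$ and, for each fixed $m \in \mathbb{N}$, apply the reasoning of Theorem~\ref{t-Lusin} with the modulus $\omega(t) = t$. The key observation (already highlighted in the introduction and in the proof of Theorem~\ref{t-Cinfty}) is that if $f'$ and $g'$ are $m$-times $L^1$ differentiable almost everywhere, then by Remark~\ref{gain} they are in particular $(m-1)$-times $L^{1,\omega}$ differentiable almost everywhere for $\omega(t)=t$; Lemma~\ref{vertL1} then gives the same for $h'$, and Lemma~\ref{intL1} promotes $f,g,h$ to $m$-times $L^{1,\omega}$ differentiable almost everywhere. The coefficient functions $f_k, g_k, h_k$ are measurable by Remark~\ref{rem44}, and by Proposition~\ref{approxWhitney} together with Lemma~\ref{uniformparameters} I can find a compact set $K_m \subseteq I$ with $\mathcal{L}^1(I \setminus K_m) < \varepsilon/2^{m+1}$ on which the jets of order $m$ are $C^m$ Whitney fields (condition~(1)), and on which the uniform $L^1$ estimates of the form \eqref{uniformL1} hold. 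Conditions~(2) and (3) of Theorem~\ref{t-Cinfty} at level $m$ follow exactly as in the verification steps of Theorem~\ref{t-Lusin}, since those arguments produce the area–velocity bound with $\varepsilon$ replaced by $C\omega(|a-b|)$.

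Next I would intersect over all $m$: set $K = \bigcap_{m=1}^\infty K_m$, which is compact with $\mathcal{L}^1(I \setminus K) \leq \sum_m \varepsilon/2^{m+1} < \varepsilon$. Because the coefficient functions are consistent across orders (the degree-$j$ coefficient of the order-$m$ Taylor polynomial does not depend on $m$, by the nesting Remarks~\ref{gain} and \ref{rem3}), the jets $F^k := f_k|_K$, $G^k := g_k|_K$, $H^k := h_k|_K$ are well defined for all $k$ and, on $K$, satisfy conditions (1), (2), (3) of Theorem~\ref{t-Cinfty} for every $m$. Applying Theorem~\ref{t-Cinfty} yields a $C^\infty$ horizontal curve $\widetilde\Gamma$ with $D^k \widetilde\Gamma|_K = (F^k,G^k,H^k)$ for all $k$; in particular $\widetilde\Gamma = \Gamma$ on $K$ (up to the measure-zero set where the $L^{1,\omega}$ derivatives fail to agree with the pointwise values), so $\mathcal{L}^1\{x : \widetilde\Gamma(x) \neq \Gamma(x)\} < \varepsilon$, which is the $\infty$-Lusin property.

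The main obstacle I anticipate is the bookkeeping needed to make the compact set $K$ simultaneously work for \emph{all} orders $m$ at once, rather than just a fixed one. Each application of Lemma~\ref{uniformparameters} and Proposition~\ref{approxWhitney} discards a set depending on $m$, and I must verify that the coefficient functions extracted at different orders are genuinely restrictions of a single coherent jet on the intersection $K$, so that the constants $\hat C_m$ in condition~(3) of Theorem~\ref{t-Cinfty} may legitimately depend on $m$ (which is permitted) while the jet itself is fixed. The summable choice $\varepsilon/2^{m+1}$ handles the measure accounting, and the nesting remarks guarantee coefficient consistency, so once these are assembled carefully the conclusion is immediate from Theorem~\ref{t-Cinfty}.
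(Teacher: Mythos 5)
Your proposal is correct and follows essentially the same route as the paper's own proof: fix $\omega(t)=t$, upgrade the $L^1$ differentiability at each order to $L^{1,\omega}$ differentiability via Remark~\ref{gain}, Lemma~\ref{vertL1}, and Lemma~\ref{intL1}, extract for each $m$ a compact set $K_m$ with $\mathcal{L}^1(I\setminus K_m)$ summably small on which the order-$m$ hypotheses of Theorem~\ref{t-Cinfty} hold as in the proof of Theorem~\ref{t-Lusin}, intersect to get $K$, note the coefficient consistency from Remark~\ref{rem3}, and apply Theorem~\ref{t-Cinfty}. The only (immaterial) difference is the choice of $\varepsilon/2^{m+1}$ versus the paper's $\varepsilon/2^m$ in the measure budget.
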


\begin{proof}
Let $\omega(t)=t$.
By Remark \ref{gain} and Lemma \ref{vertL1}, for every $m\in\mathbb{N}$ the maps $f'$, $g'$, $h'$ are $m$ times $L^{1,\omega}$ differentiable at almost every point of $I$. Hence, by Lemma \ref{intL1}, for every $m\in \mathbb{N}$ the maps $f,g,h$ are $m$ times $L^{1,\omega}$ differentiable almost everywhere. 
For almost every $x\in I$ and every $m\in \mathbb{N}$, denote the $L^{1,\omega}$ derivative of $f$ by
\[P_{f,x}^m(y) = \sum_{k=0}^m\frac{f_k(x)}{k!}(y-x)^k\]
where the $f_k$ are measurable functions by Remark~\ref{rem44}. Note that the maps $f_{k}$ are independent of $m$ by Remark \ref{rem3}.
Similarly, for almost every $x\in I$ and every $m\in \mathbb{N}$, denote the $L^{1,\omega}$ derivatives of order $m$ of $g$ and $h$ by $P_{g,x}^m$ and $P_{h,x}^m$. As before, these have coefficients $g_k(x)$ and $h_{k}(x)$ which are measurable functions of $x$.
Fix $\epsilon>0$. Proceeding as in Theorem \ref{t-Lusin}, for any $m\in \mathbb{N}$ choose a compact set $K_m\subset [a,b]$ satisfying $\mathcal{L}^1([a,b]\setminus K_m)< \epsilon / 2^m$ so that 
\begin{enumerate}
\item the jets $F_{m}, G_{m}, H_{m}$ defined on $K_m$ by
\[F_{m}^k = f_k|_{K_{m}},\ G_{m}^k = g_k|_{K_{m}} \ \text{ and }\ H_{m}^k = h_k|_{K_{m}}  \ \text{ for } 0\leq k \leq m\]
are Whitney fields of class $C^{m}$ on $K_m$.
\item There exist $C,\rho_0>0$ such that for every $0<\delta<\rho_0$ if $a,b\in K_{m}$ with $|b-a|<\delta$,
\[\dashint_a^b |f'-(T_{a}^{m}F_{m})'| \leq C\omega(|b-a|)(b-a)^{m-1} \quad \mbox{and} \quad  \dashint_a^b |g'-(T_{a}^{m}G_{m})'| \leq C\omega(|b-a|)(b-a)^{m-1} .\]
\end{enumerate}
Proceeding as in the proof of Theorem \ref{t-Lusin}, it is easy to see that $F=(F^k)_{k=0}^\infty$, $G=(G^k)_{k=0}^\infty$, and $H=(H^k)_{k=0}^\infty$ are well-defined and that they satisfy the assumption of Theorem \ref{t-Cinfty} on the compact set $K=\bigcap_{m=1}^\infty K_m$.  Hence $\Gamma=(F,G,H)$ extends to a $C^{\infty}$ horizontal curve $\widetilde{\Gamma}~=~(\widetilde{f}, \widetilde{g}, \widetilde{h})\colon I\to \mathbb{H}$ satisfying for any $k\in \mathbb{N}_{0}$
\[\widetilde{f}^{k}|_{K}=F^{k},\quad \widetilde{g}^{k}|_{K}=G^{k},\quad \widetilde{h}^{k}|_{K}=H^{k}. \]
From the definition of the compact set $K$ and the jets $F, G, H$ we have
\begin{align*}
&\mathcal{L}^{1} \left(\bigcup_{k=0}^{\infty}\{x\in I: \widetilde{f}^{k}(x)\neq f_{k}(x) \, \mbox{ or } \, \widetilde{g}^{k}(x)\neq g_{k}(x) \, \mbox{ or } \,  \widetilde{h}^{k}(x)\neq h_{k}(x)\}\right)\\
&\quad \leq \sum_{m=1}^{\infty}\mathcal{L}^{1}(I\setminus K_m)\\
&\quad<\sum_{m=1}^{\infty}\frac{\varepsilon}{2^m}=\varepsilon.
\end{align*}
This completes the proof of the theorem.
\end{proof}

\bibliography{zimbib} 
\bibliographystyle{acm}

\end{document}